\newcommand{\TitleFont}{\fontsize{24}{30}\bfseries\selectfont}  
\newcommand{\SubtitleFont}{\fontsize{13}{16}\bfseries\selectfont} 
\newcommand{\AuthorFont}{\fontsize{11}{13}\selectfont}          
\newcommand{\DateFont}{\normalsize}                             
\def\@maketitle{%
  \newpage
  \normalfont
  \begin{center}
    {\TitleFont \@title \par}%
    \vskip 2em%
    \if!\@subtitle!\else
      {\SubtitleFont \@subtitle \par}%
      \vskip 1.5em%
    \fi
    {\AuthorFont
      \setbox0=\vbox{%
        \setcounter{auth}{1}\def\and{\stepcounter{auth}}%
        \hfuzz=2\textwidth\def\thanks##1{}\@author}%
      \setcounter{footnote}{0}%
      \global\value{inst}=\value{auth}%
      \setcounter{auth}{1}%
      \def\and{\unskip\nobreak\enskip{\boldmath$ \cdot $}\enskip\ignorespaces}%
      \@author \par
    }%
    \vskip 3.5em%
  \end{center}
  \noindent{\DateFont \@date}\par\vskip 2em%
}
\renewcommand{\qed}{\hfill $\Box$}
\definecolor{darkblue}{rgb}{0,0,.5}
\definecolor{darkorange}{rgb}{0.8, 0.4, 0}
\definecolor{grayETH}{rgb}{0.33, 0.33, 0.33}
\definecolor{blueETH}{rgb}{0.129, 0.361, 0.686}
\definecolor{purpleETH}{rgb}{0.506757, 0.111486, 0.381757}
\definecolor{greenETH}{rgb}{0.398406, 0.454183, 0.14741}
\definecolor{bronzeETH}{rgb}{0.5568627, 0.40392, 0.07450980}
\definecolor{redETH}{rgb}{0.597173, 0.226148, 0.176678}
\definecolor{darkgreen}{HTML}{02862a}
\newcommand{\gray}[1]{\textcolor{grayETH}{#1}}
\newcommand{\orange}[1]{\textcolor{darkorange}{#1}}
\newcommand{\highlight}[1]{\orange{#1}}
\definecolor{linen}{HTML}{FAF0E6} 
\newcommand{\Assref}[1]{Assumption~\ref{#1}}
\newcommand{\link}[1]{\href{#1}{\texttt{#1}}}
\newcommand{\githubLink}{\link{https://github.com/Flo-Wo/HiddenConvexityCode}}
\newcommand{\nfr}{\nicefrac}
\newcommand{\fr}{\frac}
\def\RR{{\mathbb{R}}}
\def\ZZ{{\mathbb{Z}}}
\def\PP{{\mathbb{P}}}
\def\NN{{\mathbb{N}}}
\def\SS{{\mathcal{S}}}
\def\AA{{\mathcal{A}}}
\def\VV{{\mathcal{V}}}
\def\WW{{\mathcal{W}}}
\DeclareMathOperator{\OOTildeSym}{\widetilde{\mathcal{O}}}
\DeclareMathOperator{\OOSym}{\mathcal{O}}
\newcommand{\OOTilde}[1]{{\OOTildeSym\!\left(#1\right)}}
\newcommand{\OO}[1]{{\OOSym\!\left(#1\right)}}
\newcommand{\Prob}[1]{{\mathbb{P}\!\left(#1\right)}}
\newcommand{\card}[1]{\left|#1\right|}
\def\epsilon{{\varepsilon}}
\newcommand{\Exp}[1]{{ \mathbb{E}}\!\left[#1\right]}
\newcommand{\Expu}[2]{{\mathbb{E}}_{#1}\!\left[#2\right]}
\newcommand{\innerEmpty}{\,\cdot\,}
\DeclarePairedDelimiter{\paren}{(}{)}
\DeclarePairedDelimiter{\brac}{[}{]} 
\DeclarePairedDelimiter{\cbrac}{\{}{\}} 
\DeclarePairedDelimiter{\abs}{\lvert}{\rvert}
\DeclarePairedDelimiter{\norm}{\lVert}{\rVert}
\DeclarePairedDelimiterX{\IP}[2]{\langle}{\rangle}{#1, #2} 
\DeclarePairedDelimiter{\plus}{[}{]_{+}}
\newcommand{\eqdef}{\coloneqq}
\newcommand{\define}{\coloneqq}
\DeclareMathOperator*{\argmin}{arg\,min}
\DeclareMathOperator{\proj}{proj}
\def\uu{{u}}
\def\xx{{x}}
\def\xxBar{\bar{\xx}}
\def\xxHat{\hat{\xx}}
\def\xxOpt{\xx^{*}}
\def\yySlater{\bar{\yy}}
\def\uuOpt{\uu^{*}}
\def\uuHat{\hat{\uu}}
\def\xxOptGlobal{\xx^{*}_\mathrm{uncon}}
\def\uuOptGlobal{\uu^{*}_\mathrm{uncon}}
\def\xxZero{\xx^{(0)}}
\def\xxK{\xx^{(k)}}
\def\xxT{\xx^{(t)}}
\def\xxTnext{\xx^{(t+1)}}
\def\xxN{\xx^{(N)}}
\def\xxBarK{\xxBar^{(k)}}
\def\xxKnext{\xx^{(k+1)}}
\def\xxNnext{\xx^{(N+1)}}
\def\xxHatKnext{\xxHat^{(k+1)}}
\def\offset{{b}}
\def\rhoHat{{\hat{\rho}}}
\def\lambdaHat{{\widehat{\lambda}}}
\def\lambdaOpt{{\lambda^{*}}}
\def\phi{{\varphi}}
\def\yy{{y}}
\def\zz{{z}}
\def\zzT{{\zz}^{(t)}}
\def\zzTnext{{\zz}^{(t+1)}}
\def\zzTilde{\tilde{\zz}}
\def\zzZero{\zz^{(0)}}
\def\zzTUnder{\underline{\zz}^{(t)}}
\def\zzTminusOne{\zz^{(t-1)}}
\def\zzTminusOneUnder{\underline{\zz}^{(t-1)}}
\def\zzTminusTwo{\zz^{(t-2)}}
\def\zzTildeT{\zzTilde^{(t)}}
\def\zzTnext{\zz^{(t+1)}}
\def\vv{{v}}
\def\ss{{s}}
\def\setN{{[N]}}
\def\setT{{[T]}}
\newcommand{\phiK}[1]{{\phi^{(k)}_{#1}}}
\newcommand{\phiKshift}[1]{{\phi^{(k)}_{#1, \mathrm{sh}}}}
\def\XX{{\mathcal{X}}}
\def\UU{{\mathcal{U}}}
\def\DDU{{\mathcal{D}_\UU}}
\def\DDX{{\mathcal{D}_\XX}}
\def\DDUsq{{\mathcal{D}^2_\UU}}
\newcommand{\rb}[1]{\left(#1\right)}
\newcommand{\cb}[1]{\left\{#1\right\}}
\def\Fopt{{F_1^*}}
\def\LB{{\mathsf{LB}}}
\def\UB{{\mathsf{UB}}}
\newcommand{\FLB}[1]{{F_{#1}^{\LB}}}
\newcommand{\FUB}[1]{{F_{#1}^{\UB}}}
\def\feas{{\mathcal{F}}}
\def\infeas{{\mathcal{I}}}
\def\Tinner{{T_{\mathrm{in}}}}
\newcommand{\TinnerUp}[1]{T_{\mathrm{in}}^{\mathrm{#1}}}
\def\Ttotal{{T_{\mathrm{tot}}}}
\newcommand{\TtotalUp}[1]{T_{\mathrm{tot}}^{\mathrm{#1}}}
\def\nonSmoothLabel{{\mathrm{n}\text{-}sm}}
\def\smoothLabel{{sm}}
\def\smoothSlaterLabel{{sm-Slat}}
\def\Oracle{{\mathcal{A}}}
\def\epsin{{\epsilon_{\mathrm{in}}}}
\DeclareMathOperator{\IPPM}{IPPM}
\def\indicator{{\mathbbm{1}}}
\def\aa{{a}}
\def\ss{{s}}
\newcommand*\dd{\: \mathrm{d}}
\newtheorem{assumption}[theorem]{Assumption}
\begin{document}

\title{\Large\centering Global Solutions to Non-Convex Functional\\ Constrained Problems with Hidden Convexity}

\titlerunning{\normalsize Global Solutions to Non-Convex Functional Constrained Problems} 

\author{\large \centering Ilyas
 Fatkhullin\textsuperscript{1,2,3} \and Niao He\textsuperscript{2} \and
 Guanghui Lan\textsuperscript{3} \and Florian Wolf\textsuperscript{1,4} }

\authorrunning{I. Fatkhullin, et al.}

\institute{
 Ilyas Fatkhullin:~\url{ilyas.fatkhullin@ai.ethz.ch}, Niao He:~ \url{niao.he@inf.ethz.ch},\\
 Guanghui Lan:~\url{george.lan@isye.gatech.edu},
 Florian Wolf:~\url{fwolf@caltech.edu}\at
 \and \textsuperscript{1}Department of Computer Science, ETH
 Zurich, Switzerland \textsuperscript{2}ETH AI Center, ETH Zurich,
 Switzerland. \textsuperscript{3} Department of Industrial and
 Systems Engineering, Georgia Institute of Technology, Atlanta, GA.
 \textsuperscript{4}The
 Computing \&
 Mathematical Sciences Department, California Institute of Technology,
 Pasadena, CA.}


\maketitle

\vspace{-1cm}

\begin{abstract}
 Constrained non-convex optimization is fundamentally challenging, as global
solutions are generally intractable and constraint qualifications may not
hold. However, in many applications, including safe policy optimization in
control and reinforcement learning, such problems possess hidden convexity,
meaning they can be reformulated as convex programs via a nonlinear
invertible transformation. Typically such transformations are implicit or
unknown, making the direct link with the convex program impossible. On the
other hand, (sub-)gradients with respect to the original variables are
often accessible or can be easily estimated, which motivates algorithms
that operate directly in the original (non-convex) problem space using
standard (sub\text{-})gradient oracles. In this work, we develop the first
algorithms to provably solve such non-convex problems to global minima.
First, using a modified inexact proximal point method, we establish global
last-iterate convergence guarantees with
$\widetilde{\mathcal{O}}(\varepsilon^{-3})$ oracle complexity in non-smooth
setting. For smooth problems, we propose a new bundle-level type method
based on linearly constrained quadratic subproblems, improving the oracle
complexity to $\widetilde{\mathcal{O}}(\varepsilon^{-1})$. Surprisingly,
despite non-convexity, our methodology does not require any constraint
qualifications, can handle hidden convex equality constraints, and achieves
complexities matching those for solving unconstrained hidden convex
optimization.

 \keywords{constrained optimization \and non-convex optimization \and
  hidden convexity \and proximal point method \and bundle-level}
 \subclass{90C26 \and 90C30 \and 90C06} 
\end{abstract}

\setcounter{tocdepth}{2} 
\section{Introduction}
\label{sec:Introduction}
Non-convex constrained optimization problems (with possibly non-smooth
objectives and constraints) arise frequently in many modern applications. In this work, we study a sub-class of non-convex problems of the form
\begin{align}
 \begin{aligned}
  \min_{\xx \in \XX} \;F_1(\xx), \; \qquad
  \text{s.t. }\; F_2(\xx) \leq 0,
 \end{aligned}
 \label{eq:IntroEquation}
\end{align}
where $\XX \subset \RR^d$ is a closed convex set, and $F_1, F_2$ are possibly \emph{non-convex} with respect to variable $\xx$.
\footnote{We assume $F_2$ is scalar-valued, but our results are extendable to the vector-valued case.} A central running assumption in this work is
that problem \eqref{eq:IntroEquation} admits a \emph{convex reformulation}
\begin{align}
 \begin{aligned}
  \min_{\uu \in \UU} \;H_1(\uu)
  , \; \qquad
  \text{s.t. }\;  H_2(\uu) \leq 0
 \end{aligned}
 \label{eq:IntroEquationConvex}
\end{align}
$$
 \textit{via variable change:} \quad  u = c(x),
$$
where $H_1, H_2$ are convex functions defined over a
closed convex set $\UU \subset \RR^d$, and $c: \XX \to \UU$ is an invertible map
(with $c^{-1}$ denoting its inverse).\footnote{We show how the invertibility assumption can be relaxed in \Cref{sec:HiddenConvexProblemClass}.}

This property, often referred to as \emph{hidden convexity}, appears in
diverse applications, including policy optimization in optimal control
\cite{sunLearningOptimalControllers2021,andersonSystemLevelSynthesis2019,fatkhullinOptimizingStaticLinear2021,
 zhengBenignNonconvexLandscapes2023, watanabeRevisitingStrongDuality2025}
and reinforcement learning
\cite{zhangVariationalPolicyGradient2020,barakatReinforcementLearningGeneral2023a,
 yingPolicybasedPrimalDualMethods2024}, variational inference
\cite{wu2024understanding,sunNaturalGradientVI2025}, generative models
\cite{kobyzevNormalizingFlowsIntroduction2021}, supply chain and revenue
management \cite{fengSupplyDemandFunctions2018,
 chenEfficientAlgorithmsClass2025}, geometric programming
\cite{xiaSurveyHiddenConvex2020}, neural network training
\cite{bachBreakingCurseDimensionality2017,
 wangHiddenConvexOptimization2022}, and non-monotone games
\cite{vlatakis-gkaragkounisSolvingMinMaxOptimization2021,
 mladenovicGeneralizedNaturalGradient2022,
 sakosExploitingHiddenStructures2023, dorazioSolvingHiddenMonotone2025}, to
name just a few. \Cref{fig:IntroMinExample} provides illustrative examples
of constrained hidden convex problems, which we will cover in more detail
in \Cref{subsec:ExampleNonsmoothConstrainedLeastSquares}.

\begin{figure}
 \centering
 \begin{subfigure}{0.49\textwidth}
  \centering
  \includegraphics[height=.83\linewidth, keepaspectratio]{
   ./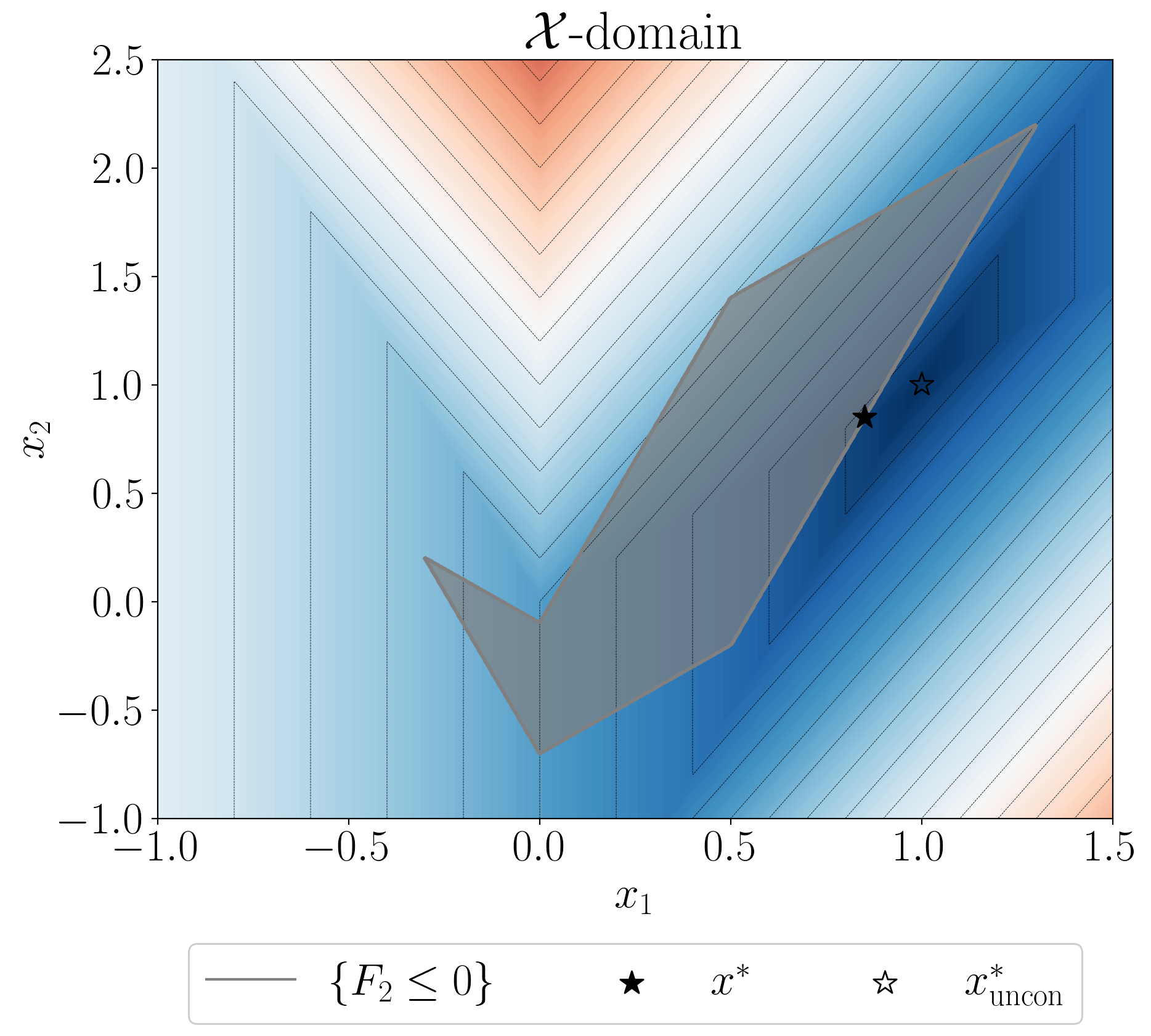
  }
  \caption{\textit{Non-convex} formulation for \eqref{eq:ToyExampleLeastSquares}.}
  \label{fig:IntroMinExampleNonSmoothX}
 \end{subfigure}
 \hfill
 \begin{subfigure}{0.49\textwidth}
  \centering
  \includegraphics[height=.83\linewidth,keepaspectratio]{
   ./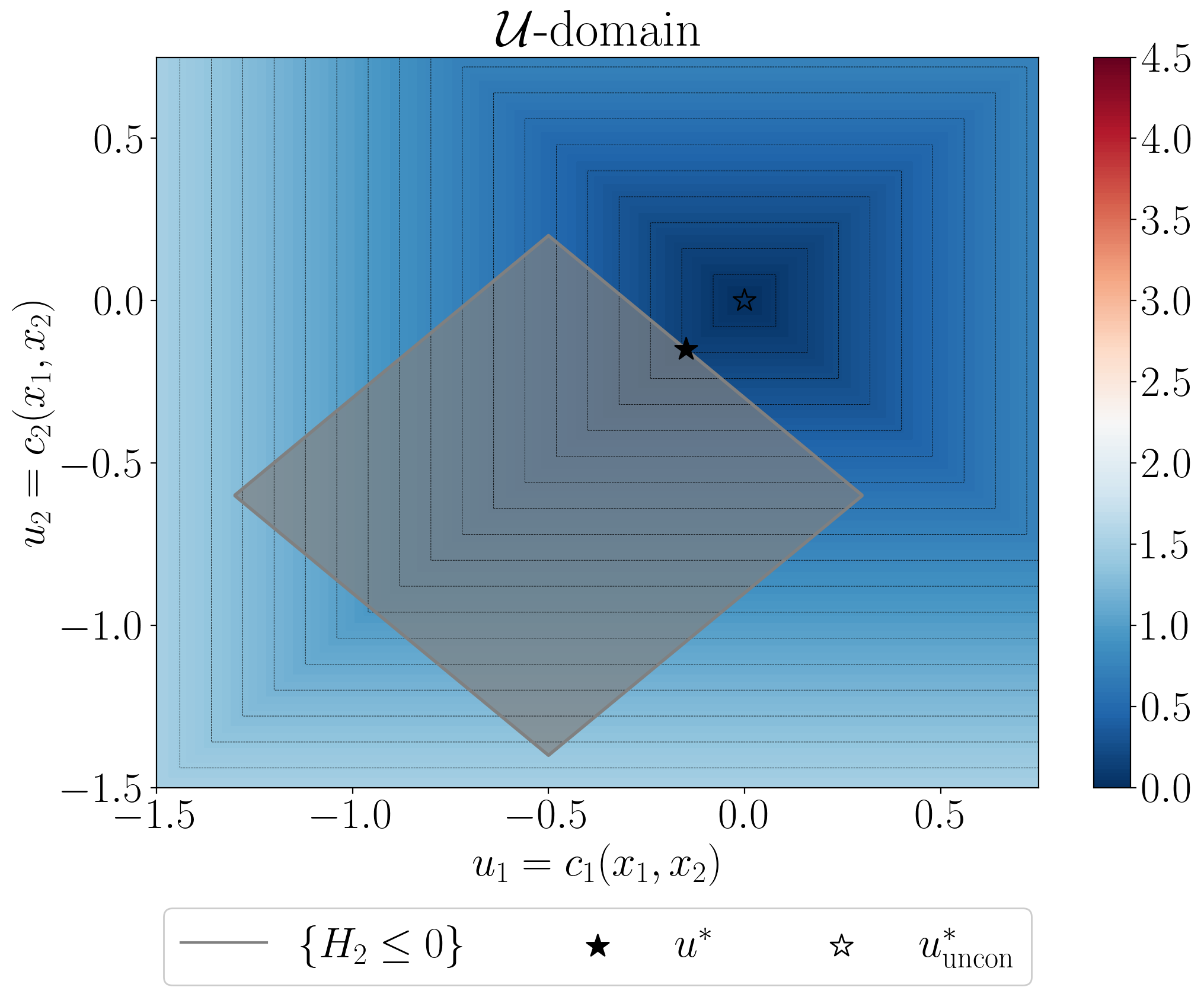
  }
  \caption{\textit{Convex} reformulation for \eqref{eq:ToyExampleLeastSquares}.}
  \label{fig:IntroMinExampleNonSmoothU}
 \end{subfigure}
 \begin{subfigure}{0.49\textwidth}
  \centering
  \includegraphics[height=.83\linewidth,keepaspectratio]{
   ./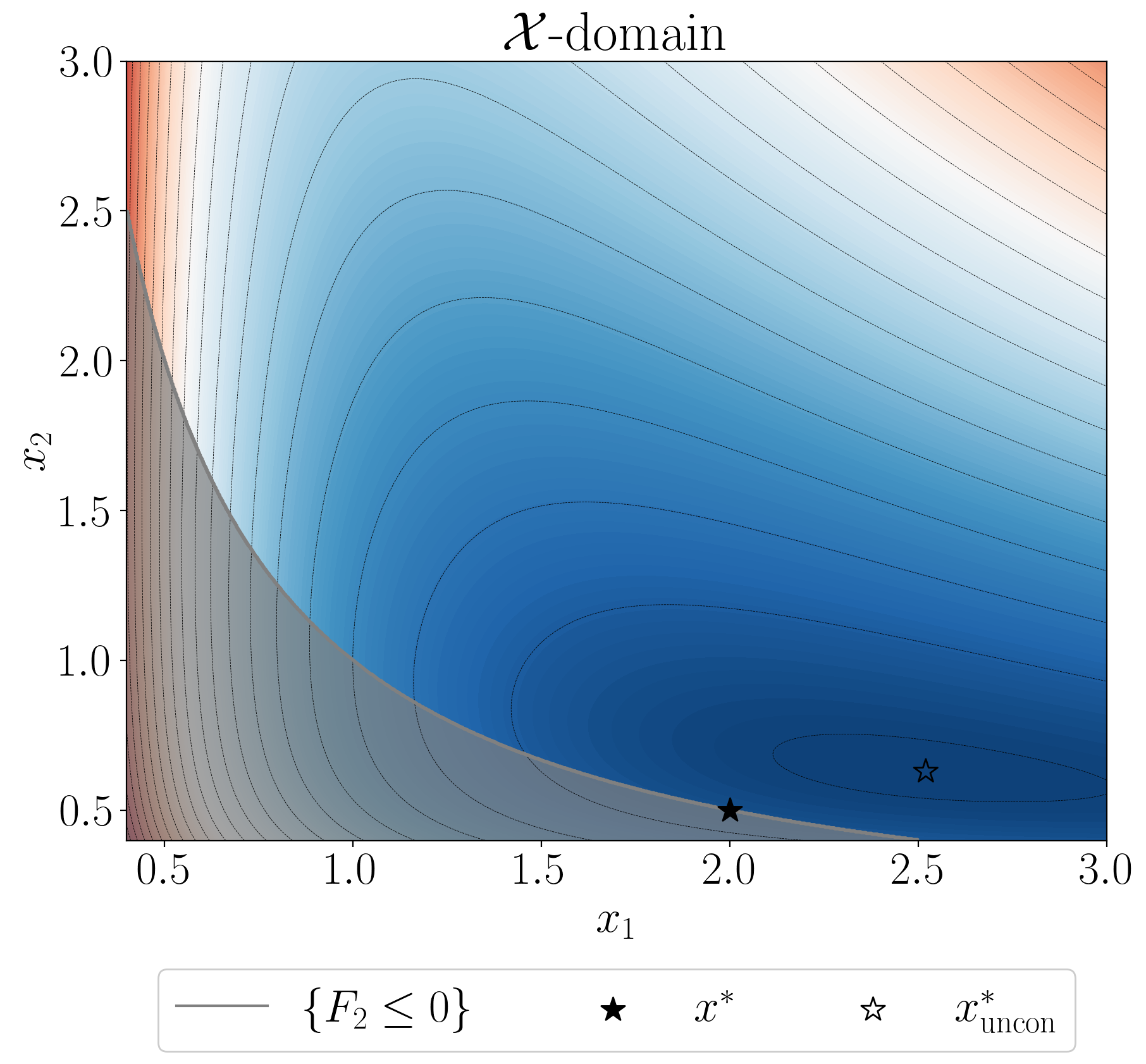
  }
  \caption{\textit{Non-convex} formulation for \eqref{eq:ToyExampleGeometricProgramming}.}
  \label{fig:IntroMinExampleSmoothX}
 \end{subfigure}
 \begin{subfigure}{0.49\textwidth}
  \centering
  \includegraphics[height=.83\linewidth,keepaspectratio]{
   ./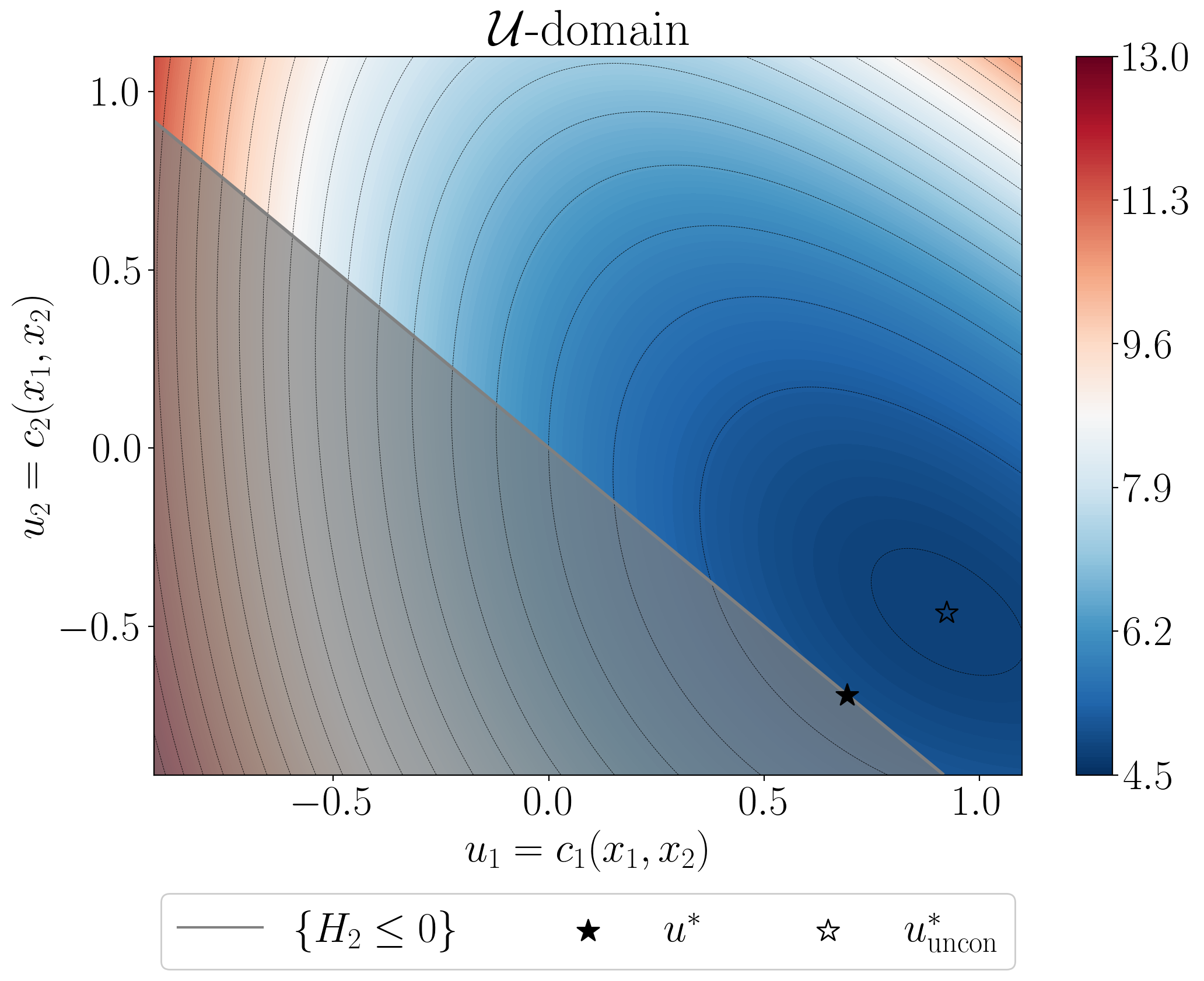
  }
  \caption{\textit{Convex} reformulation for \eqref{eq:ToyExampleGeometricProgramming}.}
  \label{fig:IntroMinExampleSmoothU}
 \end{subfigure}
 \caption[Nesterov's Non-Smooth Chebychev-Rosenbrock Function]{Illustrative examples of non-smooth (top) and smooth (bottom) hidden convex problems:
  (a) and (b) -- \textit{Constrained Non-Linear Least Squares}; (c) and (d) -- \textit{Constrained Geometric Programming}, see \eqref{eq:ToyExampleLeastSquares} and \eqref{eq:ToyExampleGeometricProgramming} in~\Cref{subsec:ExampleNonsmoothConstrainedLeastSquares} for details. The plots illustrate in color the \textit{level sets} of the non-convex formulation (left) and the convex formulation (right). The \gray{feasible sets} are shown as gray regions: $\{F_2 \leq 0\}$ in the $\XX$-domain, and $\{H_2 \leq 0\}$ in the $\UU$-domain. We use the {notation} $\xxOptGlobal$ to denote the optimum of $\min_\xx F_1(\xx)$ without constraints and $\xxOpt$ the minimizer under constraints; analogously, $\uuOptGlobal$ and $\uuOpt$ denote their counterparts in $\UU$. } \label{fig:IntroMinExample}
\end{figure}

Although the reformulation in \eqref{eq:IntroEquationConvex} is convex, in
practice, the transformation $c(\innerEmpty{})$ is often either difficult
to compute or entirely unknown
\cite{watanabeRevisitingStrongDuality2025,yingPolicybasedPrimalDualMethods2024}.
Consequently, solving the convex reformulation and recovering the solution
to \eqref{eq:IntroEquation} is generally impossible. On the other hand,
directly solving the non-convex problem \eqref{eq:IntroEquation} using the
standard (sub-)gradient methods is a well-established practice, which may
already lead to approximate global solutions. A recent work
\cite{fatkhullinStochasticOptimizationHidden2024} justifies the use of
(sub-)gradient methods applied to the hidden convex problem
\eqref{eq:IntroEquation} in the special case without functional constraint
($F_2(\innerEmpty{}) \equiv 0$). They establish global convergence of
(sub-)gradient methods in the function value, i.e., $F_1(\xxN) - F_1^* \leq
 \varepsilon$, in smooth and non-smooth setups.

However, the presence of functional constraints significantly complicates
the analysis. First, extending the standard schemes from convex
optimization (such as primal-dual, switching (sub-)gradient, bundle level)
seems challenging, and we are not aware of satisfactory analysis under
hidden convexity in the literature. Second, the direct application of
convergence guarantees for general non-convex (or weakly convex)
constrained optimization to our problem \eqref{eq:IntroEquation} yields
rather weak guarantees of reaching merely an approximate Karush-Kuhn-Tucker
(KKT) point under strong constraint qualification (CQ)
assumptions~\cite{maQuadraticallyRegularizedSubgradient2020,
 boobStochasticFirstorderMethods2023, jiaFirstOrderMethodsNonsmooth2025}.

\textbf{Contributions.} To address these challenges, we explore two distinct approaches based on (i) the
proximal point framework and (ii) a bundle-level idea.
In both cases, we design algorithms suitable for our hidden convex problems using only the oracle access to
(sub-)gradients and function values of $F_1,$ $F_2,$ which allows us to establish the first guarantees to find an
$(\varepsilon, \varepsilon)$-approximate global minima of \eqref{eq:IntroEquation}, i.e.,
$F_1(\xxN) - F_1^* \leq \varepsilon$ and $F_2(\xxN) \leq \varepsilon$.

\begin{enumerate}[start=1,label={(\bfseries C\arabic*)}]
 \item \textbf{Constrained Proximal Point Approach.}
       We propose a modified Inexact Proximal Point Method (IPPM) with shifted constraints under hidden convexity.
       The key technical novelty of our convergence analysis lies in showing that our IPPM subproblems always satisfy
       Slater's condition without imposing any CQs on the original problem. In our analysis, a Slater point
       is explicitly constructed using the variable transformation $c(x).$ Using this key insight, we apply existing
       algorithms for strongly convex constrained optimization to approximate each IPPM subproblem using the standard
       (sub-)gradient oracle access. In particular, in the non-smooth setting, we use the \textit{Switching Sub-Gradient}
       (SwSG) method as an inner solver to derive $\OOTilde{\varepsilon^{-3}}$ oracle complexity without any CQs.
       In the smooth setting, we use the \textit{Accelerated Constrained Gradient Descent} (ACGD) method, achieving
       $\OOTilde{\varepsilon^{-2}}$ (or $\OOTilde{\theta^{-1} \varepsilon^{-1}}$) oracle complexity without any CQs
       (or with $\theta$-Slater's condition). We refer to \Cref{tab:OverviewRatesTableNonSmooth,tab:OverviewRatesTableSmooth}
       for a summary.
 \item \textbf{Bundle-level Approach.}
       To improve the complexity in the smooth case when Slater's condition may not hold,
       we propose a new Shifted Bundle-level method. The main algorithmic novelty is a carefully selected shift of the
       linear constraints at each subproblem, which allows convergence of the algorithm even without convexity in the $\XX$
       space. When the optimal value of the constrained problem $\Fopt$ is available, we design a
       \textit{Shifted Star Bundle-level} algorithm (S-StarBL), which attains $\OOTilde{\varepsilon^{-1}}$ oracle complexity
       without any CQs. In the setting when $\Fopt$ is unknown, we invent an \textit{adaptive line-search}
       procedure involving an exact penalty as the convergence criterion. The resulting scheme (referred to as S-BL+AdaLS)
       achieves the $\OOTilde{\lambda^* \varepsilon^{-1}}$ complexity under strong duality, where $\lambda^*$ is the optimal
       dual variable. We refer to \Cref{tab:OverviewRatesTableSmooth} for a summary.
\end{enumerate}

Both approaches modify classical convex optimization methods to handle
hidden convexity using shifts in the constraints, and both are the first to
achieve global optimality guarantees for such problems. Our IPPM approach
relies on regularization, ensuring feasibility through an explicit Slater
point construction and achieving reliable convergence under minimal
assumptions. In contrast, the Bundle-level method adopts a cutting-plane
strategy with shifted constraints, attaining a faster
$\OOTilde{\varepsilon^{-1}}$ complexity for smooth problems without
Slater's condition, requiring only the weaker strong duality assumption. We
also validate our methods on a non-smooth constrained non-linear least
squares, and on a smooth geometric programming problem
in~\Cref{chapter:Experiments}.\footnote{The source code for the numerical
 experiments and illustrations is publicly available under \githubLink{}.}

\begin{table}[H]
 \renewcommand{\arraystretch}{1} 
 \centering
 \resizebox{0.8\textwidth}{!}
 {
  \begin{tabular}{lcc}
   \toprule
   \parbox{1.5cm}{\centering Setting}
    & Method
    & \parbox{2.8cm}{\centering Complexity}
   \\ \midrule
   \parbox{1.5cm}{\centering $F_2(\innerEmpty{}) \equiv 0$}
    & \parbox{3.7cm}{\centering Sub-Gradient Method (SM)}
    & \parbox{2.8cm}{\centering $\varepsilon^{-6}$        \\ \cite{fatkhullinStochasticOptimizationHidden2024}}
   \\
   \parbox{1.5cm}{\centering $F_2(\innerEmpty{}) \equiv 0$}
    & \parbox{2.7cm}{\centering IPPM+SM}
    & \parbox{2.8cm}{\centering $\varepsilon^{-3}$        \\ \Cref{corr:UnconstrainedHiddenConvex}}
   \\
   \midrule
   \parbox{1.5cm}{\centering $F_2(\innerEmpty{}) \not\equiv 0$}
    & \parbox{3.7cm}{\centering IPPM+SwSG
   \\
    (\Cref{algo:PPM,algo:SwitchingSubgradient})}
    & \parbox{2.8cm}{\centering $\varepsilon^{-3}$        \\ \Cref{cor:FinalRateNonSmoothIPPM}}
   \\
   \bottomrule
  \end{tabular}
 }
 \caption[Summary in non-smooth setup]{Summary of total \textit{sub-gradient} and \textit{function evaluation} complexities
  for sub-gradient methods under \textbf{hidden convexity} in the \textbf{non-smooth} setup. The ``Setting'' column distinguishes between unconstrained ($F_2(\innerEmpty{}) \equiv 0$) and constrained ($F_2(\innerEmpty{}) \not\equiv 0$) problems. The third column reports the number of sub-gradient
  (and function, when $F_2(\innerEmpty{}) \not\equiv 0$) evaluations in $\OOTilde{\innerEmpty{}}$ notation to find a
  point $\xxN$ such that
  $F_1(\xxN) - \Fopt \leq \varepsilon$, $F_2(\xxN)\leq \varepsilon$. The complexity of SM in
  \cite{fatkhullinStochasticOptimizationHidden2024} is stated in terms of
  the Moreau envelope and suffers a loss in
  complexity when translated to the original objective in the non-smooth setting, see the discussion after Corollary 1 therein. ``IPPM'' stands for \textit{Inexact Proximal Point Method}, and ``SwSG``
  for \textit{Switching Sub-Gradient}.}
 \label{tab:OverviewRatesTableNonSmooth}
\end{table}

\begin{table}[H]
 \renewcommand{\arraystretch}{3} 
 \centering
 \resizebox{\textwidth}{!}
 {
  \begin{tabular}{lccc}
   \toprule
   \parbox{1.3cm}{\centering Setting}
    & \parbox{3.2cm}{\centering Method}
    & \parbox{2.8cm}{\centering Complexity}
    & \parbox{3.0cm}{\centering Comment}
   \\ \midrule
   \parbox{1.3cm}{\centering $F_2(\innerEmpty{}) \equiv 0$}
    & \parbox{3.0cm}{\centering Gradient Descent or Heavy-ball momentum}
    & \parbox{2.8cm}{\centering $\varepsilon^{-1}$                                        \\ \cite{zhangVariationalPolicyGradient2020,fatkhullinStochasticOptimizationHidden2024}}
    & --
   \\
   \midrule
   \parbox{1.3cm}{\centering $F_2(\innerEmpty{}) \not\equiv 0$}
    & \parbox{3.0cm}{\centering IPPM+ACGD                                                 \\
    (\Cref{algo:PPM,algo:ACGD})}
    & \parbox{2.8cm}{\centering $\varepsilon^{-2}$ or $\theta^{-1} \varepsilon^{-1}$      \\ \Cref{cor:FinalRateSmoothIPPM_ACGD,cor:FinalRateSmoothIPPM_ACGD_Slater}}
    & $\theta$ is Slater gap
   \\
   \parbox{1.3cm}{\centering $F_2(\innerEmpty{}) \not\equiv 0$}
    & \parbox{3.2cm}{\centering S-StarBL                                                  \\
    (\Cref{algo:PolyakMinorant_LB})}
    & \parbox{2.8cm}{\centering $\varepsilon^{-1}$                                        \\ \Cref{thm:MainResultHCPolyakMinorant}}
    & $\Fopt$ is known
   \\
   \parbox{1.3cm}{\centering $F_2(\innerEmpty{}) \not\equiv 0$}
    & \parbox{3.0cm}{\centering S-BL+AdaLS                                                \\
    (\Cref{algo:PolyakMinorant_LB,algo:PolyakMinorant_LB_outer_line_search})}
    & \parbox{2.8cm}{\centering $\lambda\, \varepsilon^{-1}$                              \\ \Cref{thm:MainResultHCPolyakMinorantUnknownOptimalValue}}
    & \parbox{3.5cm}{\centering $F_1(x) - \Fopt + \lambda \plus{F_2(x)} \leq \varepsilon$ \\ strong duality with $\lambda \geq \lambda^*$}
   \\
   \bottomrule
  \end{tabular}
 }
 \caption[Summary in smooth setup]{Summary of total \textit{gradient} and \textit{function evaluation} complexities
  for gradient methods under \textbf{hidden convexity} in the \textbf{smooth} setup. The ``Setting'' column distinguishes between unconstrained ($F_2(\innerEmpty{}) \equiv 0$) and constrained ($F_2(\innerEmpty{}) \not\equiv 0$) problems. The third column reports the number of gradient
  (and function, when $F_2(\innerEmpty{}) \not\equiv 0$) evaluations in $\OOTilde{\innerEmpty{}}$ notation to find a
  point $\xxN$ such that
  $F_1(\xxN) - \Fopt \leq \varepsilon$, $F_2(\xxN)\leq \varepsilon$.
  ``IPPM'' stands for \textit{inexact proximal point method},
  ``ACGD'' refers to
  \textit{Accelerated Constrained Gradient Descent}, ``S-StarBL``
  refers to the
  \textit{Shifted Star Bundle-Level} method
  and ``S-BL+AdaLS`` to the \textit{Shifted Bundle-Level} method
  with the \textit{Adaptive Line-Search} for an optimal primal value $\Fopt.$}
 \label{tab:OverviewRatesTableSmooth}
\end{table}

\subsection{Related Work}\label{subsec:related_work}

Motivated by applications in reinforcement learning and revenue management,
recent works have explored the concept of \textit{hidden convexity}
\cite{zhangVariationalPolicyGradient2020,zhangConvergenceSampleEfficiency2021,chenNetworkRevenueManagement2022,
 ghaiNonconvexOnlineLearning2022,fatkhullinStochasticOptimizationHidden2024,
 chenEfficientAlgorithmsClass2025}. This concept was studied in its most
general form in \cite{fatkhullinStochasticOptimizationHidden2024} within
the same framework as ours and more recently in
\cite{bhaskaraDescentMisalignedGradients2024} in the context of misaligned
gradients, under additional assumptions on $c(\innerEmpty{})$. Notably,
\cite{chancelierConditionalInfimumHidden2021} develop properties of the
conditional infimum to detect hidden convexity. In \textit{convex
 reinforcement learning}, a related line of research focuses on policy
gradient (PG) methods \cite{zhangVariationalPolicyGradient2020,
 zhangConvergenceSampleEfficiency2021,
 barakatReinforcementLearningGeneral2023a}, with more recent work
\cite{barakatScalableGeneralUtility2025}, aiming to explicitly approximate
the transformation function $c(\innerEmpty{})$ for large state-action
spaces. However, it is unclear how to extend these methods to the setting
with hidden convex constraints. Beyond reinforcement learning,
\cite{chenNetworkRevenueManagement2022, chenEfficientAlgorithmsClass2025}
study global convergence of gradient-based methods for hidden convex
objectives in \textit{revenue management} over a simple constraint $\XX$,
and \cite{ghaiNonconvexOnlineLearning2022} studied a non-convex online
learning scenario structurally related, with stronger assumptions on the
reparametrization map $c(\innerEmpty{})$, cf. Assumption 1, 2, and 3
therein.

\paragraph{Constrained Optimization.}
There exists a variety of literature on problems of type
\eqref{eq:IntroEquation} for $F_1, F_2$ \textit{(strongly) convex} to find
a global $(\varepsilon, \varepsilon)$-optimal point
\cite{nesterovLecturesConvexOptimization2018}. Important approaches can be
classified into penalty-based
\cite{auslenderPenaltyproximalMethodsConvex1987,
 bertsekasNonlinearProgramming1999, aybatFirstOrderAugmentedLagrangian2012,
 lanIterationcomplexityFirstorderPenalty2013,
 lanIterationcomplexityFirstorderAugmented2016,
 patrascuAdaptiveInexactFast2017, xuAcceleratedFirstOrderPrimalDual2017,
 necoaraComplexityFirstorderInexact2019,
 tran-dinhProximalAlternatingPenalty2019, liuNonergodicConvergenceRate2019,
 xuIterationComplexityInexact2021, zhuOptimalLowerUpper2023}, primal-dual
\cite{nemirovskiProxMethodRateConvergence2004,
 nesterovDualExtrapolationIts2007,
 chambolleFirstOrderPrimalDualAlgorithm2011,
 tran-dinhPrimalDualAlgorithmicFramework2015,
 chambolleErgodicConvergenceRates2016,
 tran-dinhSmoothPrimalDualOptimization2018,
 xuPrimalDualStochasticGradient2020, hamedaniPrimalDualAlgorithmLine2021,
 zhuNewPrimalDualAlgorithms2022, zhangSolvingConvexSmooth2022,
 thekumparampilLiftedPrimalDualMethod2022a,
 khalafiAcceleratedPrimalDualMethods2023,
 boobStochasticFirstorderMethods2023, boobOptimalPrimalDualAlgorithm2024,
 linFasterAcceleratedFirstorder2024}, primal only, e.g. switching
sub-gradient methods
\cite{polyakGeneralMethodSolving1967,lanAlgorithmsStochasticOptimization2020,
 xuCRPONewApproach2021,islamov2025safeef}, level-set and bundle methods
\cite{lemarechalNewVariantsBundle1995,ben2005non,
 zhangPrimalMethodsVariational2025,linLevelSetMethodConvex2018,
 linLevelSetMethodsFiniteSum2018, aravkinLevelsetMethodsConvex2019}. When a
feasible starting point $\xxZero \in \cbrac{F_2 \leq 0}$, is available, one
can consider (log\mbox{-})barrier methods, see e.g.,
\cite{nesterovBarrierSubgradientMethod2011}.

Recently, there has been growing interest in the design and analysis of
algorithms for constrained \textit{non-convex} optimization problems,
aiming to find approximate KKT points. Contrary to the convex case, the
results for finding approximate KKT points are often limited to asymptotic
guarantees \cite{birginGlobalMinimizationUsing2010,
 curtisAdaptiveAugmentedLagrangian2016, sahinInexactAugmentedLagrangian2019,
 wangGlobalConvergenceADMM2019, marchiPenaltyBarrierFramework2024}. The
Proximal Point framework appears beneficial to establish non-asymptotic
guarantees in some restricted settings. Relevant results have been
established under linear constraints
\cite{hajinezhadPerturbedProximalPrimal2019,
 zhangProximalAlternatingDirection2020, zhangGlobalDualError2022,
 meloProximalAugmentedLagrangian2024}, equality constraints
\cite{xieComplexityProximalAugmented2021,
 liRateimprovedInexactAugmented2021}, convex inequality constraints
\cite{liAugmentedLagrangianBased2021, kongIterationComplexityProximal2023}.
The Augmented Lagrangian methods are frequently used in the above mentioned
works; we refer to \cite{dengAugmentedLagrangianMethods2025} for a more
general overview, which covers both convex and non-convex setups.
Non-asymptotic guarantees for finding a KKT point under equality
constraints are available, e.g., in
\cite{cartisCorrigendumComplexityFinding2017}. The proximal point framework
has been used to establish non-asymptotic rates for equality constraints
\cite{kongComplexityQuadraticPenalty2019} and more recently in
\cite{linComplexityInexactProximalpoint2022} for equality and inequality
constraints.
In the {non-smooth weakly convex} setting, key contributions for finding
approximate KKT points include double-loop proximal point method
(PPM)-based approaches, using either primal methods
\cite{maQuadraticallyRegularizedSubgradient2020,jiaFirstOrderMethodsNonsmooth2025}
or primal-dual methods
\cite{boobStochasticFirstorderMethods2023,boobOptimalPrimalDualAlgorithm2024}
for solving the inner problem. Similarly,
\cite{boobLevelConstrainedFirst2025} apply a PPM-based technique for
non-convex constrained optimization problems that can be decomposed into a
sum of an $L$-smooth and non-smooth function. Huang et al.
\cite{huangOracleComplexitySingleLoop2023} proposed a single-loop switching
sub-gradient method that employs a sophisticated switching step-size rule
inspired by \cite{davisSubgradientMethodsSharp2018} to address non-convex
constraints effectively.

In the special case of convex constrained Markov Decision Processes, the
\textit{hidden convex} structure has been studied. To the best of our
knowledge, the work by Ying et al.
\cite{yingPolicybasedPrimalDualMethods2024} is the only one that proposes
solution methods for problems of type \eqref{eq:IntroExampleCvx-CMDP} using
a momentum-based variance-reduced Primal-Dual Policy Gradient scheme.
However, this work has a serious limitation due to their weak satisfactions
of the constraint. The convergence guarantee is given in the form
$\frac{1}{N}\sum_{k=0}^{N-1} F_1(\xxK) \leq \epsilon$,
$\plus{\frac{1}{N}\sum_{k=0}^{N-1} F_2(\xxK)} \leq \epsilon$, where
$(\xxK)_{k}$ are the iterates of the algorithm. Such notion of constraint
satisfaction is weak because it permits cancellations of constraint
violations and does not guarantee an $(\varepsilon, \varepsilon)$-globally
optimal solution within the sequence $\paren{\xxK}_{k}$. In fact, even the
construction of approximate globally optimal point by averaging or sampling
from $\{\xxK\}_{k\geq1}$ is not possible in general. Thus, the design of
globally convergent methods for solving constrained hidden convex problems
remains an open problem.

\paragraph{Constraint Qualifications (CQs).}
Addressing non-convex constrained optimization problems typically requires
imposing very demanding constraint qualification conditions (CQs) to ensure
convergence to approximate KKT points
\cite{maQuadraticallyRegularizedSubgradient2020,
 boobStochasticFirstorderMethods2023, huangOracleComplexitySingleLoop2023,
 boobLevelConstrainedFirst2025, jiaFirstOrderMethodsNonsmooth2025}. The only
exception is the work of Jia and Grimmer
\cite{jiaFirstOrderMethodsNonsmooth2025}, which establishes convergence to
approximate Fritz-John stationarity
\cite{mangasarianFritzJohnNecessary1967} without assuming a CQ. However,
such results do not translate to a global minima even under convexity.
Notably, our algorithms converge to global minima under hidden convexity
and do not require any CQ conditions. This is achieved by an appropriate
modification of existing methods for convex optimization (e.g., inexact
PPM, bundle-level) and a careful use of the hidden convexity structure.

\subsection{Notations and Standard Assumptions}

In the following, we briefly revisit some basic notation. Throughout this
work, we define $\setN \define \{0, \ldots, N\}$ for $N\in \NN.$ We denote
with $\IP{\innerEmpty{}}{\innerEmpty{}},$ the inner product in $\RR^d$
along with its induced Euclidean norm $\norm{\innerEmpty} =
 \norm{\innerEmpty{}}_2,$ where $d \in \NN$ indicates the ambient dimension.
We call the map $c : \XX \to \UU$ invertible if there exists a map $c^{-1}:
 \UU \to \XX$, called inverse, with $c^{-1}(c(\xx)) = \xx$ and
$c(c^{-1}(\uu)) = \uu$ for all $\xx \in \XX$ and $\uu \in \UU$
respectively. The set $\UU\subset\RR^d$ is called convex if for all $\uu,
 \vv \in \UU$, and $\alpha \in [0,1]$ we have $(1-\alpha) \uu + \alpha \vv
 \in \UU$, and with $\DDU \define \sup_{\uu, \vv \in \UU} \norm{\uu - \vv}$
we denote its diameter. To deal with properties of the objective and the
constraints simultaneously, we use $i = 1,2$ to simplify notation. If for a
function $H_i:\UU \to \RR$ there exists $\mu_{H_i} \geq 0$ such that for
all $\uu, \vv \in \UU$ and $\alpha \in [0,1]$ it holds $H_i((1-\alpha)\uu +
 \alpha \vv) \leq (1-\alpha)H_i(\uu) + \alpha H_i(\vv) - \frac{(1-\alpha)
  \alpha \mu_{H_i}}{2} \norm{\uu - \vv}^2$ we call $H_i$ strongly convex if
$\mu_{H_i} > 0$ and convex if $\mu_{H_i} = 0$ on $\UU$ respectively. With
$\RR_{>0}^d$ we denote the set $\cbrac{\xx \in \RR^d \;\vert\; \xx_i > 0,
  \; i=1,\ldots, d}$. For a convex set $\XX\subset \RR^d$ and a point
$\yy\in\RR^d$ we denote with $\Pi_{\XX}(\yy) \define \min_{\xx \in \XX}
 \norm{\yy - \xx}$ its projection onto $\XX$; by $\delta_{\XX}$ we denote
the indicator function, i.e., $\delta_{\XX}(\xx)=0$ if $\xx \in \XX$ and
$\delta_{\XX}(\xx)=\infty$ otherwise. The relative interior of $\XX$ is
denoted by $\text{relint}(\XX).$

A function $F: \XX \to \RR$ is called $\rho$-weakly convex ($\rho$-WC) if
for any fixed $\yy \in \XX$, the function $F_{\rho}(\xx, \yy) \define
 F(\xx) + \frac{\rho}{2} \norm{\xx - \yy}^2$ is convex in $\xx \in \XX$. The
(Fréchet) sub-differential of $F$ in $\xx \in \XX$ is $\partial F (\xx)
 \define \{g \in \RR^d \vert F(\yy) \geq F(\xx) + \IP{g}{\yy - \xx} +
 o(\norm{\yy - \xx}), \forall \yy \in \RR^d\}$, with its elements $g \in
 \partial F(\xx)$ being called sub-gradients of $F$ at $\xx \in \XX$. We
refer to \cite{davisProximallyGuidedStochastic2019} for equivalent
definitions of the sub-differential set of $\rho$-WC functions. A
differentiable function $F: \XX \to \RR$ is $L$-smooth on $\XX \subset
 \RR^d$ if its gradient is $L$-Lipschitz continuous on the set $\XX$, i.e.,
it holds $\norm{\nabla F(\xx) - \nabla F(\yy)} \leq L \norm{\xx - \yy}$ for
all $\xx, \yy \in \XX$. For a constraint $F_2: \XX \to \RR$ and a budget $b
 \in \RR$, we use the following short notation $\cbrac{F_2 \leq b} \define
 \cbrac{\xx \in \XX \;\vert\; F_2(\xx) \leq b}$ to denote the corresponding
feasible set, usually $b = 0$. We use $\OO{\innerEmpty{}}$ notation to hide
all dependencies except for the final accuracy $\epsilon$ and
$\OOTilde{\innerEmpty{}}$ to hide additional logarithmic terms in
$1/\epsilon$.

The following standard assumptions will be used throughout the paper. More
specific assumptions related to the problem structure will be introduced in
the subsequent section.

\begin{samepage}
\begin{assumption}\label{ass:AssumptionExactPPM}
 We assume:\begin{enumerate}
  \item The functions $F_1, F_2$ are $\rho$-weakly convex.
        \label{ass:AssumptionExactPPM-ITEM1}
  \item The functions $F_1, F_2$ are continuous and satisfy for all $\xx \in \XX$
        that $\partial F_1(x) \neq \emptyset$, $\partial F_2(x) \neq \emptyset$ on
        $\XX$, and the norms of the sub-gradients are uniformly bounded by
        $\norm{g_{1}} \leq G_{F_1}$, $\norm{g_{2}} \leq G_{F_2}$ for all $\xx \in
         \XX$, $g_{1} \in \partial F_1(\xx)$ and $g_{2} \in \partial F_2(\xx)$
        respectively. We define $G \define \max\{G_{F_1}, G_{F_2}\}$.
        \label{ass:AssumptionExactPPM-ITEM2}
  \item The domain $\UU$ has bounded diameter $\DDU > 0$.
        \label{ass:AssumptionExactPPM-ITEM3}
 \end{enumerate}
\end{assumption}
\end{samepage}

\section{Hidden Convex Problem Class}\label{sec:HiddenConvexProblemClass}
The existence of a convex reformulation \eqref{eq:IntroEquationConvex} for
the problem \eqref{eq:IntroEquation} motivates its representation as a
compositional optimization problem in the form:
\begin{align}
 \min_{\xx \in \XX} \;F_1(\xx) \define H_1(c(\xx)), \quad
 \text{s.t. } F_2(\xx) \define H_2(c(\xx)) \leq 0,
 \tag{HC}
 \label{eq:MainProblem}
\end{align}
under a \emph{consistent transformation function}, $c(\innerEmpty{})$, for both the objective
and the constraint. Now we introduce the central definition of this work, which postulates
the existence of an (unknown)
transformation $c(\innerEmpty{})$ and its properties.

\begin{definition}[Hidden Convexity\footnote{\Cref{def:HC} can be extended to hidden \emph{strong} convexity,
   cf. \Cref{def:HC-strong} in the Appendix,
   for which we will show additional results in~\Cref{appendix:InexactPPMunderHiddenStrongConvexity}, but for the sake of
   clarity, we focus on hidden convexity throughout this work.}]
 \label{def:HC}
 The above problem \eqref{eq:MainProblem} is called \emph{hidden convex}
 with modulus $\mu_c > 0$, if its components satisfy the following underlying conditions.
 \begin{enumerate}
  \item The domain $\UU = c(\XX)$ is convex, the functions $H_1, H_2 : \UU \to \RR$
        are convex, i.e. satisfy for $i = 1, 2$ and for all $u, v \in \UU$ and any
        $\lambda \in [0,1]$
        \begin{align}
         H_{i}((1-\lambda)\uu + \lambda \vv) \leq (1-\lambda)H_i(\uu)
         + \lambda H_i(\vv).
         \tag{HC-1}
         \label{eq:HCHCDefinitionConvexityInequality}
        \end{align}
        Additionally, we assume \eqref{eq:MainProblem}
        admits a solution $\uu^* \in \UU$ with its corresponding objective function
        value $\Fopt \eqdef H_1(\uu^*) = F_1(c^{-1}(\uu^*))$.
  \item The map $c : \XX \to \UU$ is invertible and there exists a $\mu_c > 0$ such
        that for all $\xx, \yy \in \XX$ it holds
        \begin{align}
         \norm{c(\xx) - c(\yy)} \geq \mu_c \norm{\xx - \yy} \tag{HC-2}.
         \label{eq:HCDefinitionNormCfunctionInequality}
        \end{align}
 \end{enumerate}
\end{definition}

Note that the condition \eqref{eq:HCDefinitionNormCfunctionInequality}
along with Assumption~\ref{ass:AssumptionExactPPM}
(\Cref{ass:AssumptionExactPPM-ITEM3}) imply that the domain $\XX$ has
bounded diameter $\DDX \leq \frac{1}{\mu_c} \DDU $. We refer to
\cite{fatkhullinStochasticOptimizationHidden2024} for necessary and
sufficient conditions for \eqref{eq:HCDefinitionNormCfunctionInequality}.
In particular, for continuously differentiable transformations
\eqref{eq:HCDefinitionNormCfunctionInequality} is equivalent to a uniformly
bounded operator norm of the Jacobian of $c^{-1}(\innerEmpty{})$, i.e.,
$\norm{J_{c^{-1}}(u)}_{\text{op}} \leq 1/\mu_c$ for all $u \in \UU.$ We
must highlight the importance of the consistent transformation function
$c(\innerEmpty{})$ for both the objective $F_1$ and the constraint $F_2$.
If the transformations are inconsistent, the problem becomes intractable
due to multiple isolated feasible points (cf.
\Cref{subsec:NP_hardness_under_inconsistent_transformation}).

\paragraph{Relaxing invertibility condition.} Assuming the existence of an invertible map $c(\innerEmpty{})$ may sound
limiting. Here we comment on possible relaxations of the invertibility of
the transformation $c(\innerEmpty{}).$

\begin{itemize}
 \item \textbf{Local Invertibility.} The invertibility of $c(\innerEmpty{})$ can be
       relaxed to a local neighborhood $\VV_{\xx}$ around each $\xx \in \XX$,
       where $c(\innerEmpty{})$ is a bijection between $\VV_{\xx}$ and
       $\WW_{c(\xx)} \define c(\VV_{\xx})$. This allows us to relax the Item 2 of \Cref{def:HC} as
       invertibility is only required locally. By carefully examining the proofs of
       \Cref{thm:MainResultHCPPM,thm:MainResultHCPolyakMinorant,thm:MainResultHCPolyakMinorantUnknownOptimalValue},
       we find that we only require an
       $\bar{\alpha}$ such that for all $\alpha \in [0, \bar{\alpha}]$, the
       combination $(1-\alpha)c(\xx) + \alpha c(\xxOpt) \in \WW_{c(\xx)}$
       for all $\xx \in \XX$. An example of this relaxation
       appears in the context of \eqref{eq:IntroExampleCvx-CMDP} problem, cf.
       \cite[Ass. 5.11]{zhangConvergenceSampleEfficiency2021}, \cite[Ass. 2]{yingPolicybasedPrimalDualMethods2024}.
 \item \textbf{Generalized Inverse of Non-linear Transformations.}
       Another possible relaxation is to use the \emph{generalized inverse of a non-linear
        mapping} introduced by Gofer and Gilboa~\cite{goferGeneralizedInversionNonlinear2024},
       which generalizes the well-known notion of pseudoinverse of a linear map due to Penrose~\cite{penroseGeneralizedInverseMatrices1955}.
       Following their Definition 3 in \cite{goferGeneralizedInversionNonlinear2024}, the
       non-linear generalized inverse $c^\dagger: \UU \to \XX$ of the map $c(\innerEmpty{})$ is defined as:
       \begin{align}
        c^\dagger & (\uu) \define \arg\min\{\norm{\xx} \; \vert \;
        \xx \in \arg\min_{\yy \in \XX} \norm{c(\yy) - \uu}\},
        \tag{NLGI-1} \label{eq:PseudoInverseProperty1}                  \\
        c^\dagger & (c(c^\dagger(\uu))) = c^\dagger(\uu), \quad \forall
        \uu \in \UU.
        \tag{NLGI-2} \label{eq:PseudoInverseProperty2}
       \end{align}
       Our Item 2 in \Cref{def:HC} can be relaxed by
       requiring the existence of $c^{\dagger}(\innerEmpty{})$ and replacing
       the condition \eqref{eq:HCDefinitionNormCfunctionInequality} with
       \begin{align}
        \norm{c^\dagger(\uu) - c^\dagger(\vv)} \leq
        \frac{1}{\mu_c}\norm{\uu - \vv} \tag{Gen-HC-2},
        \label{eq:GenHC2-pseudoInverse}
       \end{align}
       for all $\uu, \vv \in \UU$. By examining the proof of \Cref{lemma:HCContractionInequality}, it becomes
       clear that the above relaxation is sufficient for our analysis.
       We provide an educational example for a transformation satisfying
       \eqref{eq:GenHC2-pseudoInverse}. With $d = \dim(\XX)$ and $m \define \dim(\UU)$,
       we consider a \textit{piece-wise linear} map $c(\innerEmpty{})$ of the form
       \begin{align*}
        c(\xx) = \sum_{i=1}^{k} \indicator_{R_i}(\xx) \cdot A_i \xx,
       \end{align*}
       for $\xx \in \RR^d$, with
       an arbitrary partition
       $\cbrac{R_i}_{i=1}^k$ of $\RR^d$, indicator functions
       $\indicator_{R_i}(\xx) = 1$ if $\xx \in R_i$ and
       $\indicator_{R_i}(\xx) = 0$ otherwise, and
       and arbitrary matrices $\{A_i\}_{i=1}^k \subset \RR^{m\times d}$ satisfying the boundary conditions,
       \begin{align*}
        A_i \xx = A_j \xx, \quad \forall
        x \in \overline{R_i} \cap \overline{R_j}
       \end{align*}
       for all $i, j=1, \ldots, k$ with $i\neq j,$ to ensure the continuity of $c(\innerEmpty{}).$ We distinguish two cases
       depending on the relative dimensions of $\XX$ and $\UU$:
       \begin{itemize}
        \item \textbf{Case} $d<m$ (underdetermined case): Each $A_i$ has full column rank,
              and its Moore-Penrose pseudoinverse is given by
              \begin{align*}
               A_i^\dagger \define (A_i^\top A_i)^{-1} A_i^\top.
              \end{align*}
        \item \textbf{Case} $d>m$ (overdetermined case):
              Each $A_i$ has full row rank,
              and its Moore-Penrose pseudoinverse is given by
              \begin{align*}
               A_i^\dagger \define A_i^\top (A_i A_i^\top)^{-1}.
              \end{align*}
       \end{itemize}
       Then $c(\innerEmpty{})$ satisfies
       \eqref{eq:PseudoInverseProperty1} and \eqref{eq:PseudoInverseProperty2}
       with
       \begin{align*}
        c^\dagger(\uu) \define \sum_{i=1}^{k} \indicator_{c(R_i)}(\uu) \cdot A_i^\dagger \uu,
       \end{align*}
       for $\uu \in \RR^m$. Such $c(\innerEmpty{})$ satisfies
       \eqref{eq:GenHC2-pseudoInverse} with
       \begin{align*}
        \mu_c \define \paren*{\min_{i=1,\ldots, k} \sigma_{\min}(A_i)}^{-1},
       \end{align*}
       i.e. the reciprocal
       of the minimal non-zero singular value. The special instance of the case
       where $\dim(\UU) \gg \dim(\XX)$ is particularly interesting, e.g. in the context of
       controller synthesis in optimal control
       \cite{boydLinearMatrixInequalities1994, andersonSystemLevelSynthesis2019,
        sunLearningOptimalControllers2021}, where the original problem, despite
       being non-convex, is still favorable, due to its lower dimensionality.
\end{itemize}

\subsection{Globally Optimal Solutions and KKT Points}
We start our analysis by proving elementary properties of constrained
optimization under the structural assumption of hidden convexity.
\begin{definition}\label{def:KKTcondition}
 We say $\xxHat \in \XX$ is a \emph{Karush-Kuhn-Tucker (KKT) point} of
 \eqref{eq:IntroEquation} if $F_2(\xxHat) \leq 0$
 and there exists a \emph{Lagrangian Multiplier} $\lambdaHat \geq 0$ such that
 $$
  0 \in \partial F_1(\xxHat) + \lambdaHat \, \partial F_2(\xxHat)
  + \partial_x \delta_{\XX}(\xxHat) , \quad \text{and } \quad \lambdaHat \, F_2(\xxHat) = 0 .
 $$
\end{definition}

\begin{definition}[Slater's Condition]\label{def:SlaterCondition}
 We say that a $\theta$--Slater's (or simply Slater's) condition
 holds if there exists a point $\xx \in \text{relint}(\XX)$ with
 $F_2(\xx) \leq - \theta$ for some $\theta > 0$. Such $\xx$ is called a $\theta$--Slater point.
\end{definition}
Similar to Proposition 1 in the unconstrained case
\cite{fatkhullinStochasticOptimizationHidden2024}, we relate the KKT
points of \eqref{eq:IntroEquation} with the global minima.

\begin{proposition}
 \label{prop:RelationshipKKT}
 Let $F_i(\innerEmpty{})$ be weakly convex on $\XX$ for $i = 1, 2$ and problem
 \eqref{eq:MainProblem} be hidden convex. Assume $c(\innerEmpty{})$ is differentiable for
 some $\xxHat \in \XX$.
 Then the following implications hold:
 \begin{enumerate}
  \item If $\xxHat$ is a KKT point of \eqref{eq:IntroEquation}, then $\xxHat$ is a
        global minimum.
  \item If $\xxHat$ is a global minimum of \eqref{eq:IntroEquation} and Slater's
        condition holds, i.e. also $\xxHat \in \mathrm{relint}(\XX)$, then $\xxHat$
        is a KKT point.
 \end{enumerate}
\end{proposition}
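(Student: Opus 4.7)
My plan is to reduce both implications to the \emph{fully convex} program $\min_{\uu \in \UU} H_1(\uu)$ s.t.~$H_2(\uu)\leq 0$ via the reparametrization $\uu = c(\xx)$, systematically using the chain rule for the composition $F_i = H_i \circ c$ at $\xxHat$. Since $c$ is differentiable at $\xxHat$ and the lower bound \eqref{eq:HCDefinitionNormCfunctionInequality} forces $J_c(\xxHat) \in \RR^{d \times d}$ to be injective, the chain rule for a convex function composed with a differentiable map gives the identity $\partial F_i(\xxHat) = J_c(\xxHat)^\top \partial H_i(\uuHat)$ with $\uuHat \define c(\xxHat)$. Throughout, I would parametrize displacements along the curve $\xx_\alpha \define c^{-1}\bigl(\uuHat + \alpha(\uu - \uuHat)\bigr) \in \XX$ for $\alpha \in [0,1]$ and $\uu \define c(\xx)$, noting that \eqref{eq:HCDefinitionNormCfunctionInequality} yields the linear-in-$\alpha$ bound $\norm{\xx_\alpha - \xxHat} \leq \tfrac{\alpha}{\mu_c} \norm{\uu - \uuHat}$.

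For Part~1, starting from the KKT identity $0 = g_1 + \lambdaHat\, g_2 + n$ with $g_i \in \partial F_i(\xxHat)$ and $n \in \partial_x \delta_\XX(\xxHat)$, I would first decompose $g_i = J_c(\xxHat)^\top h_i$ for some $h_i \in \partial H_i(\uuHat)$. For any feasible $\xx$, the convexity of $H_i$ along the segment $[\uuHat, \uu]$ gives the upper bound $F_i(\xx_\alpha) - F_i(\xxHat) \leq \alpha \bigl(F_i(\xx) - F_i(\xxHat)\bigr)$, while the $\rho$-weakly convex sub-gradient inequality applied at $(\xxHat, \xx_\alpha)$ gives $F_i(\xx_\alpha) - F_i(\xxHat) \geq \langle g_i, \xx_\alpha - \xxHat\rangle - \tfrac{\rho}{2}\norm{\xx_\alpha - \xxHat}^2$. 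Dividing by $\alpha$ and letting $\alpha \downarrow 0$, the quadratic remainder vanishes thanks to $\norm{\xx_\alpha - \xxHat} = O(\alpha)$, producing the ``convexity-along-curves'' estimate $F_i(\xx) - F_i(\xxHat) \geq \langle h_i, \uu - \uuHat\rangle$. Applying the identical limit to the normal-cone inequality $\langle n, \xx_\alpha - \xxHat\rangle \leq 0$ and using $n = -J_c(\xxHat)^\top(h_1 + \lambdaHat h_2)$ yields $\langle h_1 + \lambdaHat h_2, \uu - \uuHat\rangle \geq 0$. Summing the two sub-gradient estimates with weights $(1, \lambdaHat)$ and invoking complementary slackness $\lambdaHat F_2(\xxHat) = 0$ together with $F_2(\xx) \leq 0$ then delivers $F_1(\xx) \geq F_1(\xxHat)$.

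For Part~2, the bijection $c : \XX \to \UU$ transports $\xxHat$ to a global minimum $\uuHat = c(\xxHat)$ of the fully convex program. A Slater point $\xi \in \relint(\XX)$ with $F_2(\xi) < 0$ is carried to $c(\xi) \in \UU$ with $H_2(c(\xi)) < 0$, i.e., a Slater point for the $\uu$-program, so classical convex KKT theory produces $\tilde{\lambda} \geq 0$, $h_i \in \partial H_i(\uuHat)$, and $m \in \partial_u \delta_\UU(\uuHat)$ satisfying stationarity $h_1 + \tilde{\lambda} h_2 + m = 0$ and complementary slackness $\tilde{\lambda} H_2(\uuHat) = 0$. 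Setting $\lambdaHat \define \tilde{\lambda}$ and $g_i \define J_c(\xxHat)^\top h_i \in \partial F_i(\xxHat)$ via the chain rule, it remains to verify that $J_c(\xxHat)^\top m$ belongs to $\partial_x \delta_\XX(\xxHat)$. For any $\xx \in \XX$, convexity of $\XX$ gives $\xx_t \define (1-t)\xxHat + t\xx \in \XX$, hence $c(\xx_t) \in \UU$; expanding $c(\xx_t) = \uuHat + t J_c(\xxHat)(\xx - \xxHat) + o(t)$, using $\langle m, c(\xx_t) - \uuHat\rangle \leq 0$, dividing by $t$ and letting $t \downarrow 0$ gives $\langle J_c(\xxHat)^\top m, \xx - \xxHat\rangle \leq 0$, which is precisely the required normal-cone property.

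The main obstacle I expect is the rigorous justification of the one-sided limit arguments under mere weak convexity: both parts hinge on the quadratic weak-convexity remainder being annihilated against the $1/\alpha$ scaling, which succeeds precisely because \eqref{eq:HCDefinitionNormCfunctionInequality} provides the \emph{linear} bound $\norm{\xx_\alpha - \xxHat} = O(\alpha)$ rather than a H\"{o}lder-type one. A secondary technical point is verifying the chain-rule equality $\partial F_i(\xxHat) = J_c(\xxHat)^\top \partial H_i(\uuHat)$ for a composition of a convex function with a merely differentiable (non-smooth) map; this is standard under the injectivity of $J_c(\xxHat)$ guaranteed by \eqref{eq:HCDefinitionNormCfunctionInequality}, but should be stated carefully in the weakly convex setting.
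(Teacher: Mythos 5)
Your proposal is correct and follows the same overall strategy as the paper: both reduce the statement to the convex program in the $\uu$-variables through the chain-rule identity $\partial F_i(\xxHat) = J_c(\xxHat)^\top \partial H_i(\uuHat)$ and the invertibility of $J_c(\xxHat)$ implied by \eqref{eq:HCDefinitionNormCfunctionInequality}. Your Part~2 is essentially the paper's argument (transport the minimizer and the Slater point to $\UU$, invoke classical convex KKT theory, pull back), with the added explicit verification that $J_c(\xxHat)^\top m$ lies in the normal cone of $\XX$, which the paper absorbs into the asserted chain-rule set equality. Where you diverge is Part~1: the paper first proves the \emph{equivalence} of KKT points of \eqref{eq:IntroEquation} and \eqref{eq:IntroEquationConvex} and then cites the sufficiency of KKT conditions for convex programs, whereas you re-derive global optimality from scratch via the curves $\xx_\alpha$, combining \eqref{eq:HCContractionInequalityObjective}, the weakly convex sub-gradient inequality, and the $O(\alpha)$ bound from \eqref{eq:HCDefinitionNormCfunctionInequality} to kill the quadratic remainder in the limit $\alpha \downarrow 0$. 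This is a valid alternative; it is longer but makes transparent exactly where weak convexity and the Lipschitz inverse enter, and it only needs the inclusion $\partial F_i(\xxHat) \subseteq J_c(\xxHat)^\top \partial H_i(\uuHat)$ (to decompose the given $g_i$) rather than the full set equality. The one step you should not wave away in either part is precisely that chain-rule inclusion for a convex function composed with a map that is merely differentiable at a point: both your write-up and the paper treat it as standard, and it is indeed the genuine crux shared by the two proofs.
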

\begin{proof}
 First, we show that $\xxHat\in \XX$ being a KKT point of \eqref{eq:IntroEquation}
 is equivalent to $\uuHat = c(\xxHat) \in \UU$ being a KKT point of \eqref{eq:IntroEquationConvex}.
 Indeed, conditions $H_2(\uuHat) \leq 0$ and $\lambdaHat \, H_2(\uuHat) = 0$ follow immediately from the
 reformulation, and by the chain rule we have
 \begin{equation}\label{eq:chain_rule}
  0 \in \partial F_1(\xxHat) + \lambdaHat \, \partial F_2(\xxHat) + \partial_x \delta_{\XX}(\xxHat)
  = J_c^{\top}(\xxHat) (\partial H_1(\uuHat)
  + \lambdaHat \, \partial H_2(\uuHat) + \partial_\uu \delta_{\UU}(\uuHat) ).
 \end{equation}
 As the map $c(\innerEmpty{})$ is invertible with a Lipschitz continuous inverse
 by \eqref{eq:HCDefinitionNormCfunctionInequality}, then its
 Jacobian $J_c(\xxHat)$ is invertible at $\xxHat$
 (see e.g., Corollary 3.3.\,in
 \cite{lawsonInverseFunctionTheorem2020}). Therefore, \eqref{eq:chain_rule} is equivalent to
 $0 \in \partial H_1(\uuHat) + \lambdaHat \, \partial H_2(\uuHat) + \partial_u \delta_{\UU}(\uuHat)$
 and the promised equivalence holds. Since problem \eqref{eq:IntroEquationConvex} is
 convex and $\uuHat$ is a KKT point, by the sufficient optimality condition, $\uuHat$ is a
 globally optimal solution, i.e., $H_1(\uuHat) \leq H_1(\uu)$ for any
 $\uu\in \left\{\vv \in \UU \mid H_2(\vv) \leq 0 \right\}$. As a result, we have
 $$F_1(\xxHat) \leq F_1(\xx) \quad \text{for any} \quad \xx
  \in \left\{\yy \in \XX \mid F_2(\yy) \leq 0 \right\}. $$

 To prove the second claim, we notice that if $\xxHat \in \XX$ is a global
 minimum of \eqref{eq:IntroEquation} implies that $\uuHat = c(\xxHat)$ is a
 global minimum of the convex problem \eqref{eq:IntroEquationConvex}.
 Moreover, Slater's condition also holds for the convex reformulated
 problem. Then by \cite[Thm. 3.1.26]{nesterovLecturesConvexOptimization2018}
 $\uuHat$ is a KKT point of \eqref{eq:IntroEquationConvex}. It remains to
 use the equivalence of KKT points of problems \eqref{eq:IntroEquation} and
 \eqref{eq:IntroEquationConvex} to conclude the proof.
\end{proof}

\subsection{Non-convex Equality Constraints}\label{subsec:nonconvex_equality_constraints}
Throughout the paper, our main focus is hidden convex inequality
constraints. However, many applications involve non-convex equality type
constraints, which are known to be more challenging to handle than
inequality constraints. We show that our results can be extended to hidden
linear equality type constraints. In particular, for a given matrix $A \in
 \RR^{n\times d}$, consider the problem
\begin{align}
 \min_{\xx \in \XX} \;F_1(\xx) \define H_1(c(\xx)), \quad
 \text{s.t. } F_{2, \text{eq}}(\xx) \define A \,c(\xx) = 0 .
 \tag{HC-Eq}
 \label{eq:hidden_convex_equality_constraints}
\end{align}
The equality constraint of this type can be written in the form of inequality constraint by increasing the number of constraints, i.e.,
$$
 F_{2, \text{eq}}(\xx) = 0 \quad \Leftrightarrow \quad  F_{2, \text{eq}}(\xx) \leq 0, \quad \text{and} \quad  -F_{2, \text{eq}}(\xx) \leq 0 .
$$
Here, $F_{2, \text{eq}}$ is vector valued when $n>1$, and the equality and inequality constraints should be understood
component-wise, i.e., $F_{2, \text{eq}}^i(\xx) = 0$ for all $i \in [n]$. Moreover,
one can combine these constraints into a single one and directly reduce the problem to our original form
\eqref{eq:IntroEquation} with a single constraint:
$$
 F_{2, \text{eq}}(\xx) = 0 \quad \Leftrightarrow \quad F_2(\xx) \define \max_{1\leq i \leq n}
 \max\{ F_{2, \text{eq}}^i(\xx), - F_{2, \text{eq}}^i(\xx) \} \leq 0 .
$$
It is important to note that in this formulation, Slater's condition typically fails and the $\max$ operators introduce non-smoothness
even when the map $c(\innerEmpty{})$ is sufficiently smooth.

\subsection{NP-Hardness under Inconsistent Transformation}\label{subsec:NP_hardness_under_inconsistent_transformation}

A natural question is whether the assumption about the common
transformation $c(\innerEmpty{})$ in \eqref{eq:MainProblem} is essential.
Suppose the objective and constraint are each hidden convex but under
\emph{different} transformations, $c_1(\innerEmpty{})$ and
$c_2(\innerEmpty{})$. Does such a relaxation still admit efficient
algorithms?

We call this the setting of \emph{inconsistent transformations}, where
\begin{align}
 F_1(x) = H_1(c_1(x)), \qquad
 F_2(x) = H_2(c_2(x)),
 \tag{HC-ICT}\label{eq:IntroEquationInconsistent_Transform}
\end{align}
for convex functions $H_1, H_2$ and invertible maps $c_1, c_2 : \XX \to \UU$. We establish the following hardness result for such problems.

\begin{proposition}\label{prop:inconsistent-hardness}
 Hidden convex optimization under inconsistent transformations
 \eqref{eq:IntroEquationInconsistent_Transform}
 is NP-hard: there is no polynomial-time global solution method, unless $\textsf{P} = \textsf{NP}$.
\end{proposition}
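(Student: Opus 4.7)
The plan is to prove NP-hardness via reduction from a classical NP-hard problem, exploiting the fact that decoupling $c_1$ and $c_2$ in \eqref{eq:IntroEquationInconsistent_Transform} destroys the convex equivalence enjoyed by \eqref{eq:MainProblem}. My first step would be to perform the substitution $\uu = c_1(\xx)$, rewriting the problem as $\min_{\uu \in \UU} H_1(\uu)$ subject to $H_2(\phi(\uu)) \leq 0$ with $\phi \define c_2 \circ c_1^{-1}$ an arbitrary invertible self-map of $\UU$. Thus the inconsistent class covers minimizing a convex function over $\phi^{-1}(\cbrac{H_2 \leq 0})$, i.e., an arbitrary invertible reparameterization of a convex set, and this is where the hardness will enter: picking $\phi$ adversarially reproduces the whole spectrum of non-convex feasible sets that are homeomorphic to convex sets, which is exactly the missing rigidity relative to \Cref{def:HC}.

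Second, I would exhibit a concrete NP-hard instance. A natural target is non-convex polynomial optimization on the box, e.g., the indefinite QCQP $\min \IP{a}{\xx}$ s.t. $\xx^\top Q \xx \leq b$, $\xx \in [-1,1]^n$, known to be NP-hard for indefinite $Q$. The objective is linear and hence trivially hidden convex with $c_1 = \identity$ and $H_1(\uu) = \IP{a}{\uu}$. For the constraint I would construct an explicit invertible $c_2$ and convex $H_2$ with $\xx^\top Q \xx = H_2(c_2(\xx))$; one concrete choice comes from the piecewise-linear generalized-inverse class mentioned around \eqref{eq:GenHC2-pseudoInverse}, where the pieces $\{A_i\}$ can be chosen to straighten the indefinite quadratic into a convex form in $c_2(\xx)$. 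A polynomial-time algorithm for \eqref{eq:IntroEquationInconsistent_Transform} applied to this construction would decide the underlying QCQP, so no such algorithm can exist unless $\textsf{P} = \textsf{NP}$.

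The main obstacle, and the technically delicate part, is the topological rigidity of continuous invertible transformations: when $c_2$ is a continuous bijection, the feasible set $c_2^{-1}(\cbrac{H_2 \leq 0})$ is a homeomorphic image of a convex set, hence connected and contractible, and cannot directly encode disconnected discrete sets such as $\cbrac{0,1}^n$. I would handle this along two complementary routes. Route (a): pick the hard instance so that its feasible set is \emph{already} connected (as in the QCQP-on-a-box above, whose feasible region is the box itself), localizing all the non-convexity inside the constraint while keeping each of $F_1,F_2$ individually representable as a convex function of an invertible map. Route (b): invoke the relaxation of invertibility from \eqref{eq:GenHC2-pseudoInverse}, which accommodates piecewise-linear transformations that need not be globally continuous bijections, thus removing the connectedness restriction entirely and enabling direct encodings of combinatorial feasibility problems such as SUBSET SUM. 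Either route yields a polynomial-time many-one reduction from an NP-hard problem to \eqref{eq:IntroEquationInconsistent_Transform}, completing the proof.
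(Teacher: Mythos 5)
Your overall strategy (reduction from a classical NP-hard problem, exploiting the decoupling of $c_1$ and $c_2$) matches the paper's, but your concrete construction has a genuine gap that the topological observation you yourself raise actually kills. In Route (a) you propose to realize an indefinite quadratic constraint $\xx^\top Q \xx \leq b$ as $H_2(c_2(\xx))$ with $H_2$ convex and $c_2$ invertible. This is impossible: since \eqref{eq:HCDefinitionNormCfunctionInequality} forces $c_2^{-1}$ to be Lipschitz (hence continuous), every sublevel set $\{F_2 \leq t\} = c_2^{-1}(\{H_2 \leq t\})$ is the continuous image of a convex set and therefore connected; but an indefinite quadratic (already $x_1^2 - x_2^2$ on $[-1,1]^2$) has disconnected sublevel sets for suitable $t<0$. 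Your claimed escape --- that the feasible region of the QCQP ``is the box itself'' --- is incorrect: the feasible set is $\{\xx \in [-1,1]^n : \xx^\top Q\xx \leq b\}$, and in any case hidden convexity of $F_2$ constrains \emph{all} of its sublevel sets, not just the one appearing in the constraint. The vague appeal to piecewise-linear maps ``straightening'' the quadratic does not circumvent this. Route (b) abandons invertibility via \eqref{eq:GenHC2-pseudoInverse}, but Proposition~\ref{prop:inconsistent-hardness} is stated for \emph{invertible} $c_1, c_2$ as in \eqref{eq:IntroEquationInconsistent_Transform}, so you would be proving a different statement, and you give no construction there either.

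The idea you are missing is how the paper places the hardness. It keeps $c_2$ a genuine bi-Lipschitz bijection --- the shear $(y,z) \mapsto (y,\, z - \cos(2\pi y))$ componentwise --- so that $F_2(x) = \max_i |z_i - \cos(2\pi y_i)|$ really is hidden convex, and its feasible set $\{z_i = \cos(2\pi y_i)\} \cap \{Ay \leq b\}$ is a connected graph (consistent with the topological rigidity). The combinatorial structure is then created not by the constraint alone but by its \emph{interaction} with the objective $F_1(x) = \langle c, y\rangle + \|z - \mathds{1}\|$ (hidden convex under $c_1 = \identity$): driving $z$ to $\mathds{1}$ while respecting $z_i = \cos(2\pi y_i)$ forces $y_i \in \ZZ$, encoding $0$-$1$ integer linear programming. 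In short, you should encode hardness in the mismatch between the two (individually legitimate) transformations acting on objective and constraint, rather than trying to smuggle an inherently non-hidden-convex constraint past the invertibility requirement.
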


\begin{proof}
 We reduce from $0$-$1$ Integer Linear Programming \cite{karp2009reducibility}.
 Given an integer matrix $A \in \ZZ^{n \times d}$ and vectors $b, c \in \ZZ^n$,
 consider the problem
 \[
  \min_{x \in \XX} \; F_1(x)
  \qquad \text{s.t. } F_2(x) \leq 0 ,
 \]
 with
 \[
  F_1(x) \define \langle c, y \rangle + \norm{z - \mathds{1}},
  \qquad
  \XX \define \{ (y, z) \mid A y \leq b \},
  \qquad
  x \define (y, z) \in \RR^d \times \RR^d ,
 \]
 where $\mathds{1} = (1,\ldots,1)^\top$, and
 \[
  F_2(x) \define \max_{1 \leq i \leq d} \max \{ F_2^i(x), - F_2^i(x) \},
  \qquad
  F_2^i(x) \define z_i - \cos(2\pi y_i).
 \]

 \emph{Step 1: Hidden convex structure.}
 Define the maps
 \[
  c_1(x) = x,
  \qquad
  c_2(x) = \big( (y_1, z_1 - \cos(2\pi y_1)), \ldots, (y_d, z_d - \cos(2\pi y_d)) \big) .
 \]
 Then $F_1(x)$ is trivially hidden convex as it is convex. Observe that the
 constraint can be written as $F_2(x) = H_2(c_2(x))$ with $H_2(u) \define
  \|w\|_\infty,$ $u \define (v,w),$ where $v = y.$ The map
 $c_2(\innerEmpty{})$ is invertible with a well-conditioned Jacobian of
 $c_2^{-1}$ (operator norm bounded by $1+2\pi$), so $F_2$ is hidden convex
 under the transformation $c_2(\innerEmpty{})$.

 \emph{Step 2: Constraint interpretation and hardness.}
 The constraint $F_2(x) \leq 0$, together with $z = \mathds{1}$,
 forces each $y_i$ to be an integer (since $\cos(2\pi y_i) = 1$ iff $y_i \in \ZZ$).
 Thus the feasible region encodes the Boolean cube, and the problem simulates
 a $0$-$1$ Integer Linear Program of the form $\min_{y\in \ZZ} \langle c, y \rangle,$ s.t. $A \, y \leq b.$

\end{proof}

\Cref{prop:inconsistent-hardness} shows that inconsistent transformations make hidden convex optimization intractable. Moreover,
this result can be extended to show that such
problems cannot even be well-approximated in polynomial time, unless $\textsf{P} = \textsf{NP}$.
We refer to \cite{trevisan2014inapproximability,leeClassifyingApproximationAlgorithms2021a}
for the complexity of approximation algorithms
and related canonical hard problems such as Independent Set, MAX-3SAT, and MAX-CUT.

Proposition~\ref{prop:inconsistent-hardness} has an additional consequence:
it highlights the strength of hidden convexity compared to related
structural assumptions such as gradient-domination. Indeed, using
Proposition~2 from \cite{fatkhullinStochasticOptimizationHidden2024}, one
can verify that in the inconsistent-transformation setting
\eqref{eq:IntroEquationInconsistent_Transform}, both $F_1$ and $F_2$
satisfy a gradient-domination condition of the form
\[
 \inf_{s_{\xx} \in \partial(F_i+\delta_{\XX})} \|s_{\xx}\|
 \;\;\geq\;\; \frac{\mu_{c_i}}{\DDU_i} \big(F_i(\xx) - F_i^*\big),
 \qquad \forall \, \xx \in \XX, \; i \in \{1,2\},
\]
where $F_i^* \define \min_{\xx \in \XX} F_i(\xx),$ $\DDU_i \define
 \sup_{\xx, \yy\in\XX} \norm{c_i(\xx) - c_i(\yy)}$. Nevertheless,
Proposition~\ref{prop:inconsistent-hardness} shows that such condition is
insufficient to guarantee tractability: gradient domination (at least in
this form) does not guarantee the tractability of global solutions.
Figure~\ref{fig:ConsistencyExample} illustrates this phenomenon on a
simpler instance. There, the transformation, $c(\xx) \define (\xx_1 - 1,
 2\abs{\xx_1}- \xx_2 -1)^\top$, convexifies the feasible set, $\cbrac{F_2
  \leq 0} \define \cbrac{\xx \in \XX \;\vert \; \norm{c(\xx) + (0.5,
   0.6)^\top }_1 - 0.8 \leq 0}$, but simultaneously destroys the convexity of
the objective $F_1(\xx) \define \norm{\xx - (0,2)^\top}_2$, turning it into
a nonconvex function. This contrast underscores the necessity of a
\emph{consistent transformation} applied jointly to the objective and
constraints in hidden convex optimization.

\begin{figure}
 \centering
 \begin{subfigure}{0.49\textwidth}
  \centering
  \includegraphics[height=.83\linewidth, keepaspectratio]{
   ./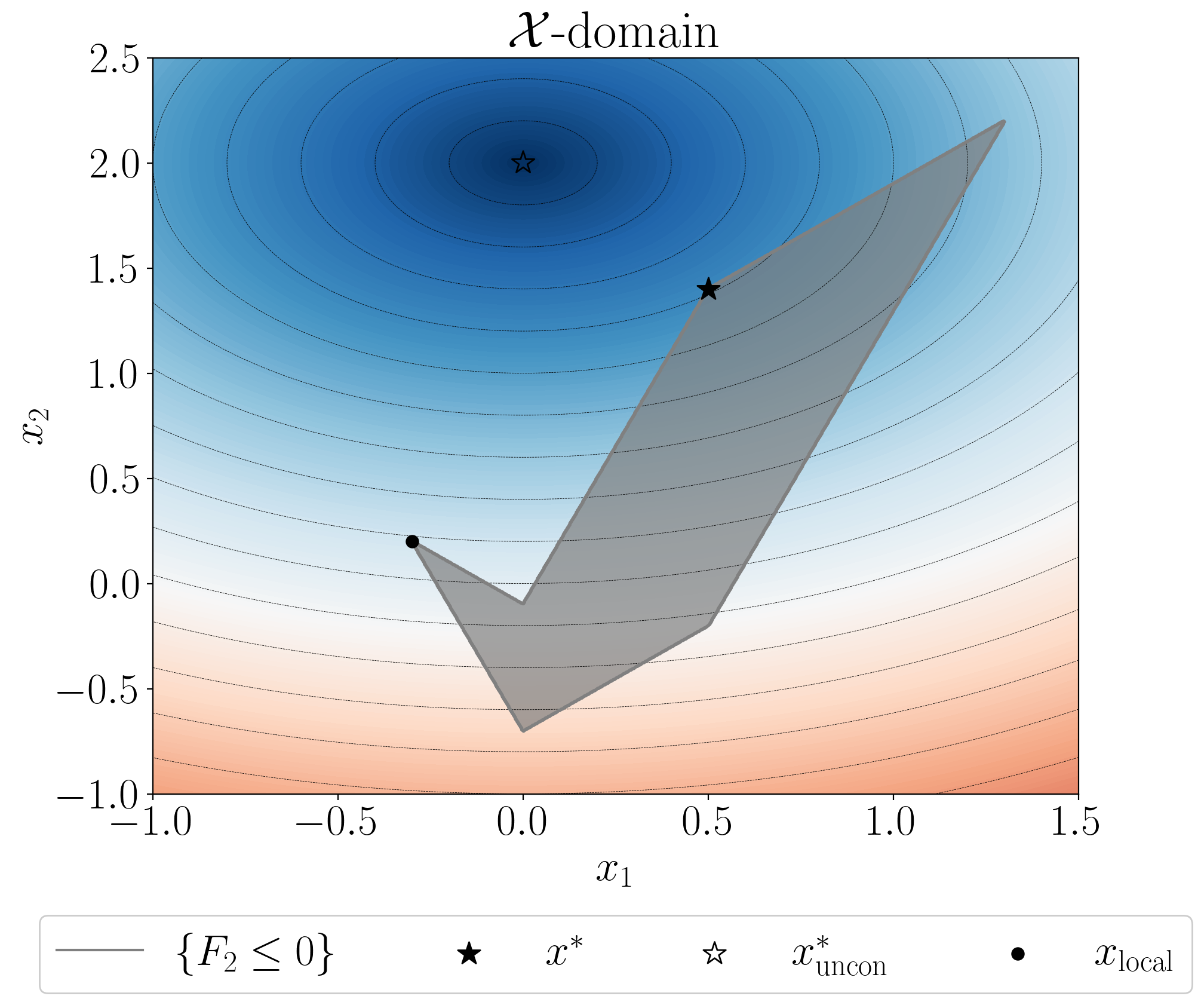
  }
  \caption{\textit{Non-convex Constraint.}
   {Level sets of $F_1$} and the \gray{feasible set
    $\{F_2 \leq 0\}$} in the $\XX$-domain.\\}
  \label{fig:ConsistencyExampleNonSmoothX}
 \end{subfigure}
 \hfill
 \begin{subfigure}{0.49\textwidth}
  \centering
  \includegraphics[height=.83\linewidth,keepaspectratio]{
   ./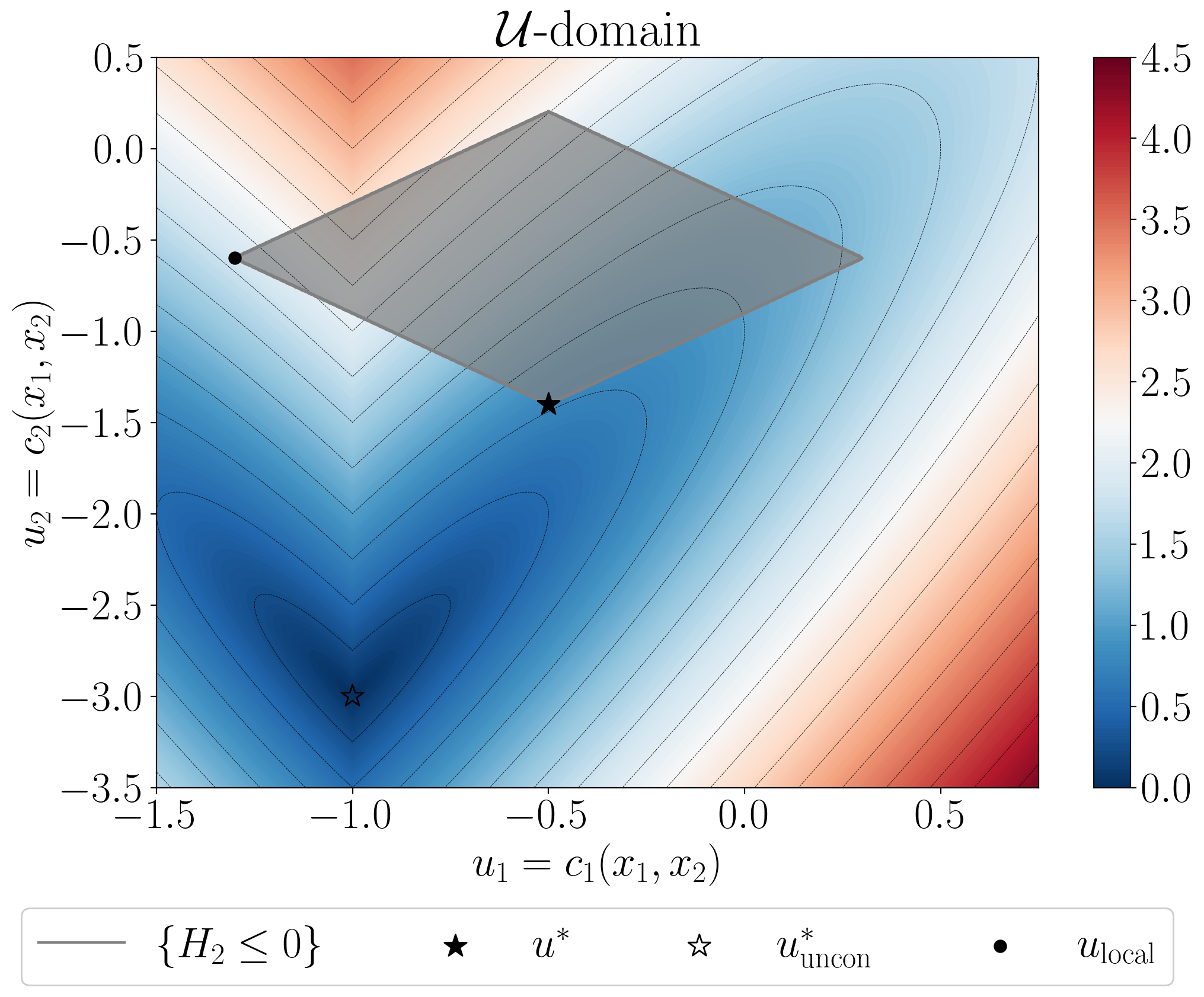
  }
  \caption{\textit{Convex Constraint after Reformulation.}
   Level sets of transformed objective and the \gray{feasible set
    $\{H_2 \leq 0\}$} in the $\UU$-domain.}
  \label{fig:ConsistencyExampleNonSmoothU}
 \end{subfigure}
 \caption[Transformation Function Consistency Condition]{
  An illustrative example of a hidden convex problem with \textit{inconsistent transformations}
  \eqref{eq:IntroEquationInconsistent_Transform}.The grey regions illustrate the feasible set, and the objective value is shown in color.
  This problem has two local minima: the sub-optimal leftmost point $\xx_{\mathrm{local}} = (-0.3, 0.2)^\top$,
  and the optimal rightmost point $\xxOpt = (0.5, 1.5);$ $\xxOptGlobal$ denotes the global optimum without constraints.
  A local search method may terminate at the sub-optimal local  minima $\xx_{\mathrm{local}}.$
 } \label{fig:ConsistencyExample}
\end{figure}

\subsection{Motivating Applications}\label{subsec:motivating_applications}
Now we describe three motivating practical examples for hidden convex
problems under functional constraints with \textit{consistent
 transformations}. For additional examples, we refer, for example, to
\cite{xiaSurveyHiddenConvex2020,
 fatkhullinStochasticOptimizationHidden2024}.

\paragraph{Geometric Programming
 \cite{eckerGeometricProgrammingMethods1980,
  boydTutorialGeometricProgramming2007, xiaSurveyHiddenConvex2020}.}
In the context of power control and communication systems,
as well as optimal doping profile,
the problems often involve so-called \emph{posinomial functions}
$F_j: \RR^d_{>0} \to \RR$ with $j=1,\ldots, p$ of the form
\begin{align*}
 F_j(\xx) \define \sum_{k_j=1}^{K_j} b_{k_j} \xx_1^{a_{1,k_j}} \cdots \xx_d^{a_{d,k_j}}
\end{align*}
with coefficients $b_{k_j} > 0$, $K_j \in \NN_+$ and $a_{i,k_j} \in \RR$ for all
$j=1,\ldots, p$, $k_j=1, \ldots, K_j$, $i=1,\ldots, d$. {C}onstrained
{G}eometric {P}rogramming problem in the standard form
\cite[Eq. 3]{boydTutorialGeometricProgramming2007} is given by
\begin{align}
 \begin{aligned}
  \min_{\xx \in \RR_{>0}^d} \;F_1(\xx),  \qquad
  \mathrm{s.t.} \; F_j(\xx) \leq 1, \; j=2,\ldots, p ,
 \end{aligned}
 \tag{CGP}\label{eq:ExampleIntroGeometricProgramming}
\end{align}
which is non-convex in $\xx \in \XX \define \RR_{>0}^d$ but admits a convex reformulation
via the variable change
$\uu \define c(\xx) \define \paren{\log(\xx_1), \ldots, \log(\xx_d)}^\top$.
The resulting convex reformulation is
\begin{align*}
 H_j(\uu) \define F_j(c^{-1}(\uu))
 = \sum_{k_j=1}^{K_j} b_{k_j} \exp(a_{1,k_j}\uu_1) \cdots \exp(a_{d,k_j}\uu_d) ,
\end{align*}
where $H_j(\innerEmpty{})$, $j=1,\ldots, p$ are convex. One can easily see
that \eqref{eq:ExampleIntroGeometricProgramming} is hidden convex
and satisfies \eqref{eq:HCDefinitionNormCfunctionInequality} with
$\mu_c \define (\max_{\xx\in\XX} \norm{\xx}_{\infty})^{-1}$, since
$\log \xx_i$ is $\paren{\max_{\xx \in \XX}\xx_i}^{-1}$-strongly monotone.

\paragraph{Convex Constrained Markov Decision Process (CCMPD)
 \cite{zhangVariationalPolicyGradient2020}.}
\label{paragraph:IntroMotivatingExampleConvexCMDP}
Convex reinforcement learning (RL) under convex constraints generalizes
the classical (constrained) RL setting. Based on a discounted constrained
Markov Decision Process
(CMDP) of the form $\mathcal{M}(\SS, \AA, \PP, \mu_0, \gamma, r, c)$, where $\SS$ and
$\AA$ denote the (finite) state and action spaces respectively,
$\PP : \SS \times \AA \to \Delta(\SS)$ represents the state-action transition
probability kernel, $\Delta(\SS)$
is the probability simplex
over $\SS$, $\mu_0$ is the initial state distribution and $\gamma \in (0,1)$
is the discount factor. Based on the reward $r:\SS \times \AA \to \RR$
and the penalty cost $c: \SS \times \AA \to \RR$, the classical RL problem is formulated in
finding an optimal stationary policy $\pi: \Delta(\AA)^{\card{\SS}} \to \Delta(\AA)$ by maximizing
the reward function while satisfying the constraint on the cost function.
With the notation of $X \sim \rho$ for a random variable $X$ following a
probability distribution $\rho$, as well as $\Exp{\innerEmpty{}}$ and
$\Prob{\innerEmpty{}}$ for the expectation and probability, respectively,
we formally define
\begin{align}
 \begin{aligned}
  \min_{\pi \in \Pi}\; F_1(\pi) &
  \define -
  \Expu{\ss_0 \sim \mu_0, \pi}{\sum_{h=0}^{\infty} \gamma^h
  r(\ss_h, \aa_h)} ,                      \\
  \text{s.t.}\; F_2(\pi)        & \define
  \Expu{\ss_0 \sim \mu_0, \pi}{\sum_{h=0}^{\infty} \gamma^h
   c(\ss_h, \aa_h)} \leq 0 ,
 \end{aligned}
 \tag{CMDP}\label{eq:IntroExampleClassicalCMDP}
\end{align}
where $\Pi \define \Delta(\AA)^{|\SS|}$ is the set of all stationary policies.
This set is the product of simplices, which admits an efficient projection. Note that, for consistency with our
formulation \eqref{eq:IntroEquation}, we minimize the negative expected reward,
which is equivalent to maximizing the expected reward in standard RL formulations.

Given a policy $\pi$, at each time $h\in \NN$, the agent is in a state
$\ss_h$ and chooses an action $\aa_h \sim \pi(\innerEmpty{}|\ss_h)$,
resulting in a transition $\ss_{h+1} \sim \PP(\innerEmpty{} \vert \ss_h,
 \aa_h)$. With $\PP_{\mu_0, \pi}$ we denote the induced probability
distribution of the Markov chain $(\ss_h, \aa_h)_{h \in \NN}$ with an
initial state distribution $\mu_0$. Under a transformation via the
\emph{state-action occupancy measure}
\cite{suttonReinforcementLearningIntroduction2018}, defined by
\begin{align}
 \lambda^\pi: \SS \times \AA \to (0,1), \;
 \lambda^\pi(\ss, \aa)
 \define \sum_{h=0}^{\infty} \gamma^h \PP_{\mu_0, \pi}\paren{\ss_h = \ss, \aa_h = \aa},
\end{align}
the classical constrained RL problem becomes linear
in the objective and the constraint, i.e. \eqref{eq:IntroExampleClassicalCMDP}
is equivalent to
$\max_{\lambda \in \UU} \IP{r}{\lambda^\pi}$
s.t. $\IP{c}{\lambda^\pi} \leq 0$,
where $\UU \define \cbrac{\lambda^\pi \; \vert \; \pi \in \Pi}$.
Convex constrained RL generalizes this optimization problem to
\begin{align}
 \begin{aligned}
  \min_{\pi \in \Pi} \; & F_1(\pi) \define H_1(\lambda^\pi) ,        \\
  \mathrm{s.t.} \;      & F_2(\pi) \define H_2(\lambda^\pi) \leq 0 ,
 \end{aligned}
 \tag{CCMDP}
 \label{eq:IntroExampleCvx-CMDP}
\end{align}
where the utility functions $H_1, H_2: \UU \to \RR$ are convex functions in $\lambda^\pi$,
but the resulting optimization problem over the policy space $\Pi$ is non-convex. Beyond the standard
\eqref{eq:IntroExampleClassicalCMDP}, popular examples of \eqref{eq:IntroExampleCvx-CMDP} encompass safe exploration,
i.e. $H_1$ is the negative entropy,
under safety constraints, e.g. staying close to an experts trajectory
$H_2(\lambda^\pi) \define \norm{\lambda^\pi -
  \lambda^{\pi_{\mathrm{exp}}}}$. Other instances include safe apprenticeship
learning and safe learning, see e.g.,
\cite{geistConcaveUtilityReinforcement2021,zahavy2021reward, mutti2022challenging} for details.

The problem \eqref{eq:IntroExampleCvx-CMDP} is hidden convex by
construction with $\XX = \Pi$ and $c(\xx) \define \lambda^\pi$, (with $\xx
 = \pi$). As shown in \cite[Proposition
 H.1]{zhangVariationalPolicyGradient2020}, the constant $\mu_c$ can be
estimated under mild assumptions on the initial distribution $\mu_0$. Note
that in convex RL, we can control $\lambda^\pi$ only implicitly by changing
the policy $\pi$, i.e. via the policy gradient theorem, and thus, the exact
computation of the transformation map and its inverse would require the
knowledge of the state-action transition probability kernel and can be
either computationally expensive or even intractable.

\paragraph{Controller Synthesis in Optimal Control.}
Given a continuous-time linear time-invariant (LTI) system
\begin{equation}\label{eq:LTI}
 \dot x(t) = A \, x(t) + B\, u(t) , \qquad x(0) = x_0, \tag{LTI}
\end{equation}
where $x(t) \in \RR^n$ is a state vector, $u(t) \in \RR^m$ a control input vector, and $x_0 \in \RR^n$ is an initial
state generated as a random variable with covariance matrix $W = \Exp{x_0 \,x_0^{\top}} \in \SS_{+}^n$,
where $\SS_{+}^n$ denotes the set of symmetric
positive semidefinite $n\times n$ matrices.
With static state feedback $u(t) = K \, x(t),$ $K \in \RR^{m\times n}$, the classical linear quadratic regulator
(LQR) problem reads \cite{kalman1960contributions,fatkhullinOptimizingStaticLinear2021}:
\begin{align}\label{eq:LQR_original_problem}
  & \min_{K\in \RR^{m\times n}}  J_{\text{LQR}}(K) \define  \Exp{\int_{0}^{\infty} \rb{ x^{\top}(t) Q x(t) + u^{\top}(t) R u(t) } \dd t} \tag{LQR} \\
  & \quad  \text{s.t.} \quad \eqref{eq:LTI}, \quad u(t) = K \, x(t) , \quad A + BK \text{ is Hurwitz stable,} \notag
\end{align}
where $Q \succeq 0$, $R \succ 0$, and Hurwitz stable means that all real parts of eigenvalues of $A + BK$ are negative.
It is known that \eqref{eq:LQR_original_problem} admits an optimal solution $K^*$
(that is unique when $W\succ 0$), which is also optimal for any square integrable control laws $u(t)$ under mild assumptions.

A direct gradient-based optimization of $J_{\text{LQR}}(K)$ over $K$ is
possible, but it requires initialization with a stabilizing $K$ such that
$A+BK$ is Hurwitz. Finding such an initialization is often nontrivial,
particularly in large-scale or poorly conditioned systems. To circumvent
this difficulty, one can instead reformulate the problem with equality-type
constraints that implicitly enforce stability and allow the user an
arbitrary initialization of $K$.

Specifically, consider the nonlinear equality-constrained problem
\begin{align}\label{eq:LQR_nonlinear_constraint}
  & \min_{\xx \define (P, K) }  F_1(\xx) \define \operatorname{Tr}\rb{(Q + K^{\top} R K) P } \tag{NL-Eq-LQR} \\
  & \quad  \xx \in \XX \define \cb{ (P, K) \mid P \succ 0, P = P^{\top}} , \notag                            \\
  & \quad \text{s.t.}  \quad F_{2, \text{eq}}(x) \define (A+BK) P + P (A+BK)^{\top} + W = 0 . \notag
\end{align}
Above formulation is equivalent to \eqref{eq:LQR_original_problem}, but \eqref{eq:LQR_nonlinear_constraint}
implicitly enforces stability via the Lyapunov equation $F_{2, \text{eq}}(\xx) = 0$ and $P  \succ 0$, avoiding the need for stabilizing initialization.
Although the strong duality holds under mild assumptions \cite[Thm. 1]{watanabe2025revisiting},
the presence of the equality constraint implies that Slater's condition does not hold for this problem.

Nevertheless, a convex reformulation of \eqref{eq:LQR_nonlinear_constraint}
can be obtained by the variable change $$ \uu \define (P, Y) \define
 c((P,K)) \define \rb{P, (K - K^*) P } , $$ where $K^*$ is the optimal
controller.

The inverse of $c(\innerEmpty{})$ is given by $$ c^{-1}((P, Y)) = (P, K^* +
 YP^{-1}) $$ which admits a Jacobian of the form $$ D(c^{-1})_{(P,
   Y)}\brac{\Delta P, \Delta Y} = \rb{\Delta P, \Delta Y P^{-1} - Y P^{-1}
  \Delta P P^{-1}}^\top $$ defined in perturbation notation in order to avoid
tensors. For the paired spectral norm of the Jacobian we have
\begin{align*}
  & \norm{ D(c^{-1})_{(P,Y)}\brac{\Delta P,\Delta Y}}                                   \\
  & \quad\quad\le \sqrt{ \norm{\Delta P}^2
  + \Big( \norm{\Delta Y} \cdot \norm{P^{-1}}
  + \norm{Y} \cdot \norm{P^{-1}}^2 \cdot \norm{\Delta P}
 \Big)^2 }                                                                              \\
  & \quad\quad\le \sqrt{\, 1 + \norm{P^{-1}}^2 + \norm{Y}^2 \, \norm{P^{-1}}^4 \,}\cdot
 \norm{(\Delta P,\Delta Y)}.
\end{align*}
Let us assume that the variable $P$ is bounded away from zero with a sufficiently small constant, i.e.,
$P\succeq \delta I,$ and the variable $Y$ is bounded. Then we can bound the operator norm by
$$
 \norm{ D(c^{-1})_{(P,Y)}} \leq
 \sqrt{1 + \tfrac{1}{\delta^2}
  + \tfrac{\norm{Y}^2}{\delta^4}}
 =
 \sqrt{\, 1 + \tfrac{1}{\delta^2}
  + \tfrac{\norm{(K-K^*)\cdot P}^2}{\delta^4}},
$$
showing that $c^{-1}$ is Lipschitz, which verifies \eqref{eq:HCDefinitionNormCfunctionInequality}.

We define $A^* \define A + B K^*$, $Q^* \define Q + (K^*)^{\top} R K^*,$
then \eqref{eq:LQR_nonlinear_constraint} is reformulated as
\begin{align}\label{eq:LQR_linear_constraint}
  & \min_{\uu := (P, Y) }  H_1(u) \define \operatorname{Tr}\rb{(Q^* P + P^{-1} Y^{\top} R Y +  (K^*)^{\top} R Y + Y^{\top} R K^* } \tag{L-Eq-LQR} \\
  & \quad  \uu \in \UU \define \cb{ (P, Y) \mid P \succ 0, P = P^{\top}} , \notag                                                                 \\
  & \quad \text{s.t.} \quad H_{2, \text{eq}}(\uu) \define A^* P + B Y + (A^*P + B Y)^{\top} + W = 0 . \notag
\end{align}
Notice that the objective is convex and the constraint is linear
in the new variables. Although such reformulation exists, the
change of variables $\uu = c(\xx)$ depends on the unknown
optimal controller $K^*$, thus we need to solve the problem in
the original variable $\xx = (P, K).$
We note that \eqref{eq:LQR_linear_constraint} involves linear
equality constraints; refer to
\Cref{subsec:nonconvex_equality_constraints},
where we show how multiple scalar equality type constraints can be
reformulated into a single inequality constraint using the max-operator, matching our problem formulation
\eqref{eq:MainProblem}.

\subsection{Observations for Convergence Analysis}
We state the following basic proposition from
\cite{fatkhullinStochasticOptimizationHidden2024}, which is a useful
observation for convergence analysis of gradient methods under hidden
convexity.
\begin{proposition}[\cite{fatkhullinStochasticOptimizationHidden2024} Prop. 3]
 \label{lemma:HCContractionInequality}
 Let \eqref{eq:MainProblem} be hidden convex with $\mu_c > 0$. For any $\alpha \in [0,1]$
 and $\xx, \yy \in \XX$, define $\xx_\alpha \define c^{-1}((1-\alpha) c(\xx) + \alpha c(\yy))$,
 then, for $i =1, 2$, the following functional inequality
 \begin{align}
  F_i(\xx_\alpha) \leq (1-\alpha)F_i(\xx) + \alpha F_i(\yy)
  \tag{HC-FI}
  \label{eq:HCContractionInequalityObjective}
 \end{align}
 and the norm inequality
 \begin{align}
  \norm{\xx_\alpha - x} \leq \frac{\alpha}{\mu_c} \norm{c(\xx) - c(\yy)}
  \tag{HC-NI}
  \label{eq:HCContractionInequalityNorm}
 \end{align}
 hold.
\end{proposition}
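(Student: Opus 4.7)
The plan is to pass the whole computation through the convex reformulation \eqref{eq:IntroEquationConvex} using the invertibility of $c(\innerEmpty{})$, so that both inequalities reduce to the two defining properties of hidden convexity. Let $\uu \define c(\xx)$ and $\vv \define c(\yy)$, and set $\uu_\alpha \define (1-\alpha)\uu + \alpha \vv$. Since $\UU = c(\XX)$ is convex by Item~1 of \Cref{def:HC}, we have $\uu_\alpha \in \UU$, and hence $\xx_\alpha = c^{-1}(\uu_\alpha)$ is well-defined with $c(\xx_\alpha) = \uu_\alpha$.

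For the functional inequality \eqref{eq:HCContractionInequalityObjective}, I would simply unfold the composition $F_i = H_i \circ c$ and invoke convexity of $H_i$ on $\UU$ (property \eqref{eq:HCHCDefinitionConvexityInequality} for $i=1,2$):
\begin{align*}
F_i(\xx_\alpha) \;=\; H_i(c(\xx_\alpha)) \;=\; H_i\bigl((1-\alpha)\uu + \alpha \vv\bigr)
\;\leq\; (1-\alpha) H_i(\uu) + \alpha H_i(\vv) \;=\; (1-\alpha) F_i(\xx) + \alpha F_i(\yy).
\end{align*}

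For the norm inequality \eqref{eq:HCContractionInequalityNorm}, the key observation is that the straight line in $\UU$ has a trivial endpoint difference: $c(\xx_\alpha) - c(\xx) = \uu_\alpha - \uu = \alpha(\vv - \uu) = \alpha\bigl(c(\yy) - c(\xx)\bigr)$. Applying the bi-Lipschitz lower bound \eqref{eq:HCDefinitionNormCfunctionInequality} to the pair $(\xx_\alpha, \xx)$ gives
\begin{align*}
\mu_c \norm{\xx_\alpha - \xx} \;\leq\; \norm{c(\xx_\alpha) - c(\xx)} \;=\; \alpha\, \norm{c(\yy) - c(\xx)},
\end{align*}
which rearranges to \eqref{eq:HCContractionInequalityNorm} upon dividing by $\mu_c > 0$.

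There is essentially no obstacle here; the statement is an immediate unpacking of \Cref{def:HC}. The only points to verify carefully are that $\uu_\alpha \in \UU$ so that $c^{-1}(\uu_\alpha)$ is meaningful (this uses convexity of $\UU$), and that $c(\xx_\alpha) = \uu_\alpha$ (this uses the identity $c \circ c^{-1} = \mathrm{id}_{\UU}$ from the invertibility assumption). Both are guaranteed by the standing hidden convexity hypothesis.
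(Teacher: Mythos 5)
Your proof is correct and follows essentially the same route as the paper's: the functional inequality is the identical unfolding of $F_i = H_i \circ c$ through convexity of $H_i$, and your norm bound via \eqref{eq:HCDefinitionNormCfunctionInequality} applied to the pair $(\xx_\alpha, \xx)$ is just the paper's appeal to Lipschitz continuity of $c^{-1}$ stated in its raw form. The extra well-definedness remarks (convexity of $\UU$ ensuring $\uu_\alpha \in \UU$, and $c\circ c^{-1} = \mathrm{id}_\UU$) are implicit in the paper's proof and are a welcome clarification.
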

These two inequalities will be used multiple times in the subsequent sections. The proof is included in \Cref{Appendix-sec:ProofsHiddenConvexClass} for completeness.
\section{Proximal Point Method (PPM)}
\label{sec:AnalysisPPM}

We begin the algorithmic part by analyzing the Proximal Point Method under
hidden convexity. This approach seems promising in light of the prior work,
which used PPM framework for non-asymptotic analysis, cf.
\Cref{subsec:related_work}. Specifically, we focus on an inexact variant
(IPPM) using a \textit{feasible inner solver} $\Oracle$; see
\Cref{algo:PPM}. Such feasible inner solver will be crucial to avoid
constraint qualification (CQ) assumptions. The main challenge is to prove
convergence of IPPM to a global optimum under hidden convexity when
$\Oracle$ is only approximate. Our key observation is that, even if the
original problem fails Slater's condition, each IPPM subproblem can be made
to satisfy Slater's condition via a \emph{constraint shift}. In
\Cref{sec:ComplexityAnalysis} we instantiate the inner solver $\Oracle$ and
derive total (sub-)gradient oracle complexities for \eqref{eq:MainProblem}
problems, depending on the smoothness levels of $F_1$ and $F_2$.

The central device enabling our IPPM analysis is a two-level shift of the
constraint that enlarges the feasible set of each subproblem: (i) an outer
shift by a budget $\tau$ (chosen on the order of the target accuracy
$\epsilon$), and (ii) an inner shift used by the inner solver $\Oracle$
that depends on the Slater gap of the current IPPM subproblem (typically
smaller), cf. \Cref{algo:SwitchingSubgradient,algo:ACGD}.

\begin{algorithm}[H]
 \caption{$\IPPM(F_1, F_2, \xxZero, \epsilon, \tau, N, \rhoHat)$\\
  Inexact Proximal Point Method for \eqref{eq:MainProblem}}
 \begin{algorithmic}[1]
  \State \textbf{Input:} Objective $F_1$, constraint $F_2$, initial point $\xxZero \in \mathcal{X} \cap
   \{F_2(\innerEmpty{}) \leq \tau \}$,
  accuracy $\epsilon$,
  \highlight{constraint violation budget $\tau$},
  outer loops $N$,
  inner (feasible) algorithm $\Oracle$, regularization parameter $\rhoHat >
   \rho$ \For {$k = 0, 1, \dots, N-1$} \State
  Define for $\xx \in \XX$
  \begin{align*}
   \phi_{1}^{(k)}(\xx) \define F_1(\xx) + \frac{\rhoHat}{2}\norm{\xx - \xxK}^2, \qquad \phi_{2}^{(k)}(\xx) \define F_2(\xx) + \frac{\rhoHat}{2}\norm{\xx - \xxK}^2
  \end{align*}
  \State Compute an approximate feasible solution to \eqref{eq:InexactPPMProblemInnerLoopHCHC} via
  $$\xxKnext \gets
   \Oracle(\phi_1^{(k)}, \phi_2^{(k)}, \xxK,
   {\Tinner}, \tau)$$
  \EndFor
  \State \textbf{Return:} $\xx^{(N)}$
 \end{algorithmic}
 \label{algo:PPM}
\end{algorithm}

\subsection{Convergence Analysis of Inexact PPM}\label{subsec:InexactPPM}
IPPM solves at each iteration $k \in \mathbb{N}$ the (strongly convex)
\highlight{shifted} subproblem
\begin{align}
 \begin{aligned}
  \xxKnext \approx \xxHatKnext \define \arg\min_{\xx \in \XX} \; \phi_1^{(k)}(\xx)
  \define F_1(\xx) & + \frac{\rhoHat}{2}\norm{\xx - \xxK}^2 , \\
  \text{s.t. }  \phi_2^{(k)}(\xx)
  \define F_2(\xx) & + \frac{\rhoHat}{2} \norm{\xx - \xxK}^2
  \;\highlight{-}\; \highlight{\tau} \leq 0.
 \end{aligned}
 \tag{Shifted-IPPM}
 \label{eq:InexactPPMProblemInnerLoopHCHC}
\end{align}
Here, $\xxHatKnext$ is the exact subproblem minimizer and $\xxKnext$ is an approximate solution.

The \textbf{proof sketch} of the analysis is the following:
\begin{itemize}
 \item \emph{Initialization.} Assume $\xxZero$ is $\tau$-feasible for \eqref{eq:MainProblem}, i.e., $F_2(\xxZero) \le \tau$. If not, a simple (sub-)gradient method on $F_2$ finds such a point in $\OOTilde{1/\tau}$ gradient evaluations in the smooth case or $\OOTilde{1/\tau^3}$ in the non-smooth case; this does not change the overall complexity.
 \item \emph{Feasibility preservation.} If $\xxK$ is $\tau$-feasible for \eqref{eq:MainProblem}, it is (trivially) feasible for \eqref{eq:InexactPPMProblemInnerLoopHCHC}. Running a \emph{feasible} inner method on \eqref{eq:InexactPPMProblemInnerLoopHCHC} from $\xxK$ yields $\phi_2^{(k)}(\xxK) \le \tau$ and hence $F_2(\xxK) \le \tau$. Thus all outer iterates remain $\tau$-feasible for \eqref{eq:MainProblem}.

 \item \emph{Slater points for subproblems.} Define $\xxK_\alpha \define c^{-1}\big((1-\alpha)c(\xxK) + \alpha c(\xx^\star)\big)$. For sufficiently small $\alpha$ (relative to $\tau$), the point $\xxK_\alpha$ is
       \emph{strictly feasible} for \eqref{eq:InexactPPMProblemInnerLoopHCHC}. This proves Slater's condition for \eqref{eq:InexactPPMProblemInnerLoopHCHC} at each iteration $k\geq0$ and allows us to apply a feasible method to solve this subproblem.

 \item \emph{Optimality Improvement.} It remains to build a PPM recursion similar to analysis in \cite{fatkhullinStochasticOptimizationHidden2024} to guarantee the improvement in $F_1$, up to errors from inexact inner solvers.

\end{itemize}

Now we formalize our key ``HC--Slater's Lemma'', which verifies the
subproblems \eqref{eq:InexactPPMProblemInnerLoopHCHC} satisfy Slater's
condition under a suitable reference point $\xxK$.

\begin{lemma}[HC--Slater's Lemma]\label{le:Slater_PPM_inner}
 Assume that \eqref{eq:MainProblem} is hidden convex,
 \Assref{ass:AssumptionExactPPM}.\ref{ass:AssumptionExactPPM-ITEM3} holds and $\xxK$ is $\tau$--feasible for \eqref{eq:MainProblem}. Then
 \begin{enumerate}
  \item \eqref{eq:InexactPPMProblemInnerLoopHCHC} satisfies
        $\frac{\alpha \tau }{2}$--Slater's condition with
        $\alpha \leq \min\left\{1, \frac{\mu_c^2 \tau}{\rhoHat \DDUsq} \right\}$.
        \label{le:Slater_PPM_inner-ITEM1}
  \item If, additionally, \eqref{eq:MainProblem} satisfies $\theta$--Slater's
        condition, then \eqref{eq:InexactPPMProblemInnerLoopHCHC} satisfies
        $\frac{\beta \theta }{2}$--Slater's condition with $\beta \leq
         \min\left\{1, \frac{\mu_c^2 \theta}{\rhoHat \DDUsq} \right\}$.
        \label{le:Slater_PPM_inner-ITEM2}
 \end{enumerate}
\end{lemma}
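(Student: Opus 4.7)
The plan is to construct explicit Slater points for the subproblem \eqref{eq:InexactPPMProblemInnerLoopHCHC} by interpolating, in the convex $\UU$-coordinate, between the current reference $\xxK$ and a carefully chosen feasible point of the original \eqref{eq:MainProblem}: the global minimizer $\xxOpt$ for Item~\ref{le:Slater_PPM_inner-ITEM1}, and a $\theta$-Slater point $\yySlater$ for Item~\ref{le:Slater_PPM_inner-ITEM2}. The two workhorses will be the pair of inequalities from \Cref{lemma:HCContractionInequality}: the functional contraction \eqref{eq:HCContractionInequalityObjective} applied to $F_2$ controls the first half of the shifted constraint $\phi_2^{(k)}$, while the norm contraction \eqref{eq:HCContractionInequalityNorm} controls the proximal quadratic. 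The final argument is a simple first-versus-second-order balance in the interpolation parameter.

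For Item~\ref{le:Slater_PPM_inner-ITEM1}, I would set $\xxK_\alpha \define c^{-1}\bigl((1-\alpha) c(\xxK) + \alpha c(\xxOpt)\bigr)$ and apply \eqref{eq:HCContractionInequalityObjective} together with $F_2(\xxOpt) \leq 0$ and the $\tau$-feasibility $F_2(\xxK) \leq \tau$ to get $F_2(\xxK_\alpha) \leq (1-\alpha)\tau$. Combining this with \eqref{eq:HCContractionInequalityNorm} and the diameter bound $\norm{c(\xxK) - c(\xxOpt)} \leq \DDU$ from \Assref{ass:AssumptionExactPPM}.\ref{ass:AssumptionExactPPM-ITEM3} yields, after subtracting the shift $\tau$,
\begin{align*}
\phi_2^{(k)}(\xxK_\alpha) \;\leq\; -\alpha \tau + \frac{\rhoHat \, \alpha^2 \DDUsq}{2 \mu_c^2}.
\end{align*}
Requiring $\alpha \leq \mu_c^2 \tau/(\rhoHat \DDUsq)$ makes the quadratic correction at most $\alpha\tau/2$, so $\phi_2^{(k)}(\xxK_\alpha) \leq -\alpha\tau/2$, which is the promised witness.

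Item~\ref{le:Slater_PPM_inner-ITEM2} will proceed analogously after replacing $\xxOpt$ by $\yySlater$ (with $F_2(\yySlater) \leq -\theta$) and $\alpha$ by $\beta$: the $F_2$-contraction now reads $F_2(\xxK_\beta) \leq (1-\beta)\tau - \beta\theta$, whence
\begin{align*}
\phi_2^{(k)}(\xxK_\beta) \;\leq\; -\beta \tau - \beta \theta + \frac{\rhoHat \, \beta^2 \DDUsq}{2\mu_c^2}.
\end{align*}
Picking $\beta \leq \mu_c^2 \theta/(\rhoHat \DDUsq)$ bounds the quadratic term by $\beta\theta/2$, and dropping the nonpositive $-\beta\tau$ gives $\phi_2^{(k)}(\xxK_\beta) \leq -\beta\theta/2$, as required.

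The only delicate (but minor) point I expect to address is the $\mathrm{relint}(\XX)$ requirement in \Cref{def:SlaterCondition}: the constructed witness lies in $\XX$ via $c^{-1}$ but could in principle sit on $\bdry(\XX)$ if $\xxOpt$ (resp.\ $\yySlater$) does. Since the interpolation parameter can be perturbed infinitesimally without disturbing the Slater gap beyond a negligible loss, a standard perturbation argument relocates the witness into $\mathrm{relint}(\XX)$ without affecting the stated bounds. Beyond that, the proof is purely algebraic; the conceptual content is the joint use of hidden convexity to transport a linear gain in the constraint value while incurring only a quadratic cost in the proximal regularizer.
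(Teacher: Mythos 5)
Your proposal is correct and follows essentially the same route as the paper's proof: interpolate in $\UU$-coordinates between $\xxK$ and $\xxOpt$ (resp.\ a $\theta$--Slater point), apply \eqref{eq:HCContractionInequalityObjective} and \eqref{eq:HCContractionInequalityNorm} with the diameter bound, and choose the interpolation parameter so the quadratic proximal cost is dominated by half the linear gain. The brief remark on relocating the witness into $\mathrm{relint}(\XX)$ is a reasonable extra precaution that the paper's proof does not spell out.
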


\begin{proof}
 \textit{Part 1. } Fix an iteration index $k \in \setN$, and for any $\alpha \in [0,1]$ define
 $$\xx^{(k)}_\alpha \define c^{-1}\left((1-\alpha) c(\xxK) +
  \alpha c(\xxOpt)\right).$$
 We will show that
 $\xx^{(k)}_\alpha$ is a Slater point for subproblem \eqref{eq:InexactPPMProblemInnerLoopHCHC}. Indeed,
 \begin{align}
                  & F_2(\xx^{(k)}_\alpha) + \frac{\rhoHat}{2} \norm{\xx^{(k)}_\alpha - \xxK}^2 - \tau \notag \\
  \overset{(i)}   & {\leq} (1-\alpha) F_2(\xxK) + \alpha F_2(\xxOpt) + \frac{\rhoHat}{2}
  \norm{\xx^{(k)}_\alpha - \xxK}^2 - \tau \notag                                                             \\
  \overset{(ii)}  & {\leq} (1-\alpha) \tau + \alpha \cdot 0 + \frac{\rhoHat}{2}
  \norm{\xx^{(k)}_\alpha - \xxK}^2 - \tau \notag                                                             \\
                  & = -\alpha \tau + \frac{\rhoHat}{2} \norm{c^{-1}\left((1-\alpha) c(\xxK)
  + \alpha c(\xxOpt)\right) - c^{-1}(c(\xxK))}^2 \notag                                                      \\
  \overset{(iii)} & {\leq} -\alpha \tau + \frac{\rhoHat}{2} \frac{\alpha^2}{\mu_c^2}
  \norm{c(\xxK) - c(\xxOpt)}^2 \notag                                                                        \\
  \overset{(iv )} & {\leq} -\alpha \tau + \frac{\rhoHat}{2} \DDUsq \frac{\alpha^2}{\mu_c^2}
  \overset{\space}{\leq} - \frac{\alpha \, \tau}{2} < 0, \label{eq:strict_feasibility}
 \end{align}
 where we used \eqref{eq:HCContractionInequalityObjective} for $F_2$
 in $(i)$, $\tau$-feasibility of $\xxK$ in $(ii)$, \eqref{eq:HCContractionInequalityNorm} in $(iii),$
 as well as the boundedness of $\UU$ domain in $(iv)$. The last inequality follows by the choice of $\alpha.$

 \textit{Part 2. } Let $\yySlater \in \XX$ be a $\theta$--Slater point of \eqref{eq:MainProblem},
 and for any $k \in \setN$, $\beta \in [0,1]$ define
 $$\yySlater^{(k)}_\beta \define c^{-1}\left((1-\beta) c(\xxK) +
  \beta c(\yySlater)\right).$$
 We will show that
 $\yySlater^{(k)}_\beta$ is a Slater point for subproblem \eqref{eq:InexactPPMProblemInnerLoopHCHC}. Indeed,
 \begin{align}
                  & F_2(\yySlater^{(k)}_\beta) + \frac{\rhoHat}{2} \norm{\yySlater^{(k)}_\beta - \xxK}^2 - \tau \notag \\
  \overset{(i)}   & {\leq} (1-\beta) F_2(\xxK) + \beta F_2(\yySlater) + \frac{\rhoHat}{2}
  \norm{\yySlater^{(k)}_\beta - \xxK}^2 - \tau \notag                                                                  \\
  \overset{(ii)}  & {\leq} (1-\beta) \tau - \beta \cdot \theta + \frac{\rhoHat}{2}
  \norm{\yySlater^{(k)}_\beta - \xxK}^2 - \tau \notag                                                                  \\
  \overset{(iii)} & {\leq} -\beta \theta + \frac{\rhoHat}{2} \norm{c^{-1}\left((1-\beta) c(\xxK)
  + \beta c(\yySlater)\right) - c^{-1}(c(\xxK))}^2 \notag                                                              \\
  \overset{(iv)}  & {\leq} -\beta \theta + \frac{\rhoHat}{2} \frac{\beta^2}{\mu_c^2}
  \norm{c(\xxK) - c(\yySlater)}^2 \notag                                                                               \\
  \overset{(v )}  & {\leq} -\beta \theta + \frac{\rhoHat}{2} \DDUsq \frac{\beta^2}{\mu_c^2}
  \overset{\space}{\leq} - \frac{\beta \, \theta}{2} < 0, \notag
 \end{align}
 where we used \eqref{eq:HCContractionInequalityObjective} for $F_2$
 in $(i)$, $\theta$--Slater point in $(ii)$,
 $(1-\beta)\tau \leq \tau$ since $\beta \leq 1$ in $(iii)$,
 \eqref{eq:HCContractionInequalityNorm} in $(iv)$,
 as well as the boundedness of $\UU$ domain in $(v)$. The last inequality follows by the choice of $\beta.$
\end{proof}

\Cref{le:Slater_PPM_inner} says that regardless whether \eqref{eq:MainProblem} satisfies Slater's condition,
a Slater point for PPM sub-problem \eqref{eq:InexactPPMProblemInnerLoopHCHC} always exists.
This allows us to apply a feasible method to solve such sub-problems, i.e., we suppose the algorithm $\Oracle_{\epsin}$ solves
\eqref{eq:InexactPPMProblemInnerLoopHCHC} to $(\epsin,0)$-optimality for any
target precision $\epsin > 0$, i.e.
\begin{equation}
 \begin{aligned}
  \phiK{1}(\xxKnext) - \phiK{1}(\xxHatKnext) & \leq \epsin , \\
  \phiK{2}(\xxKnext) - \tau                  & \leq 0 .
 \end{aligned}
 \tag{IPPM-Feas}
 \label{eq:Feasible_oracle_PPM}
\end{equation}
We will see in \Cref{sec:ComplexityAnalysis} that such algorithms are
readily available in the literature (with a slight modification of shifting the constraint $\phiK{2}$
with a value less than $\frac{\alpha \tau}{2}$ or $\frac{\alpha \theta}{2}$), e.g.,
switching sub-gradient \cite{lanAlgorithmsStochasticOptimization2020, jiaFirstOrderMethodsNonsmooth2025}
or fast primal-dual \cite{zhangSolvingConvexSmooth2022} methods.

Now we are set to show the main result, i.e. the convergence of IPPM.
\begin{theorem}[Inexact PPM]
 \label{thm:MainResultHCPPM}
 Assume that \eqref{eq:MainProblem} is hidden convex and \Assref{ass:AssumptionExactPPM} holds.
 Let $\xxZero$ be $\tau$--feasible for \eqref{eq:MainProblem} and algorithm
 $\Oracle_{\epsin}$ initialized with a feasible point $\xxK$ outputs a  point
 $\xxKnext$ satisfying \eqref{eq:Feasible_oracle_PPM} after $\Tinner = \Tinner(\epsin)$ (sub-)gradient and function evaluations.
 Given a lifting parameter $\rhoHat > \rho$ (here $\rho>0$ is the weak
 convexity parameter) and a desired tolerance $\epsilon > 0$ for the
 optimality gap, assume $\epsilon\leq \frac{3\rhoHat\DDUsq}{2\mu_c^2}$ and
 $\tau \leq \frac{\rhoHat\DDUsq}{2\mu_c^2}$ hold.
 Then setting $\rhoHat \define 2\rho$ and $\epsin \leq \fr{\epsilon}{ 3}
  \min\left\{\frac{2\mu_c^2 \epsilon}{3\rhoHat\DDUsq}, \frac{\mu_c^2
   \tau}{\rhoHat \DDUsq}\right\}$, the last iterate of \Cref{algo:PPM}
 satisfies
 \begin{align}
  F_1(\xxN) - \Fopt \leq \epsilon,
  \quad F_2(\xxN) \leq \tau
  \tag{IPPM-Opt}
  \label{eq:HCHCInexactPPMlastIterateResult}
 \end{align}
 after
 \begin{align*}
  N \geq \max\left\{\frac{3\rho\DDUsq}{\mu_c^2 \epsilon},
  \frac{2 \rho \DDUsq}{\mu_c^2 \tau}\right\}
  \cdot \log\left(\frac{3 \Delta_0}{\epsilon}\right)
 \end{align*}
 iterations, where $\Delta_0 \define F_1(\xxZero)- \Fopt$. The total oracle complexity is given
 by
 \begin{align*}
  \Ttotal \geq N \cdot \Tinner\paren{\epsin}
  = \max\left\{\frac{3\rho\DDUsq}{\mu_c^2 \epsilon},
  \frac{2 \rho \DDUsq}{\mu_c^2 \tau}\right\}
  \cdot \log\left(\frac{3 \Delta_0}{\epsilon}\right) \cdot
  \Tinner\paren{\epsin}.
 \end{align*}
\end{theorem}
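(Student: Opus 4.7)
The plan is to turn IPPM into a linearly contractive recursion for the optimality gap $\Delta_k \define F_1(\xxK) - \Fopt$, using as the central comparison point $\xxK_\alpha \define c^{-1}((1-\alpha)c(\xxK) + \alpha c(\xxOpt))$ already introduced in \Cref{le:Slater_PPM_inner}. I would proceed in four steps: (i) inductively verify $\tau$--feasibility of the IPPM iterates; (ii) invoke \Cref{le:Slater_PPM_inner}.\ref{le:Slater_PPM_inner-ITEM1} to certify that a feasible oracle satisfying \eqref{eq:Feasible_oracle_PPM} is well-defined at every iteration; (iii) derive a one-step inequality on $\Delta_k$ by comparing $\xxKnext$ against $\xxK_\alpha$; (iv) tune $\alpha$, $\rhoHat$, and $\epsin$ to unroll the recursion and read off the complexity.

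Step (i) is immediate: the base case is the assumption $F_2(\xxZero) \leq \tau$, and the inductive step uses the second inequality in \eqref{eq:Feasible_oracle_PPM}, namely $\phiK{2}(\xxKnext) \leq \tau$, which directly gives $F_2(\xxKnext) \leq \tau$ after dropping the nonnegative squared-norm term. For step (iii), the computation \eqref{eq:strict_feasibility} in the proof of \Cref{le:Slater_PPM_inner} shows that whenever $\alpha \in (0, \alphaBar]$ with $\alphaBar \define \min\{1, \mu_c^2\tau/(\rhoHat\DDUsq)\}$, the point $\xxK_\alpha$ is feasible for \eqref{eq:InexactPPMProblemInnerLoopHCHC}. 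Combining the $\epsin$-inexactness of $\xxKnext$ with the optimality of the exact subproblem minimizer then yields $\phiK{1}(\xxKnext) \leq \phiK{1}(\xxK_\alpha) + \epsin$. Expanding $\phiK{1}$, using \eqref{eq:HCContractionInequalityObjective} to bound $F_1(\xxK_\alpha) \leq (1-\alpha)F_1(\xxK) + \alpha \Fopt$, and \eqref{eq:HCContractionInequalityNorm} together with $\norm{c(\xxK) - c(\xxOpt)} \leq \DDU$ to bound $\norm{\xxK_\alpha - \xxK}^2 \leq \alpha^2 \DDUsq/\mu_c^2$, produces the clean recursion
\begin{equation*}
\Delta_{k+1} \;\leq\; (1-\alpha)\Delta_k + \frac{\rhoHat\, \alpha^2 \DDUsq}{2\mu_c^2} + \epsin ,
\end{equation*}
valid for every $\alpha \in (0, \alphaBar]$.

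For step (iv), I would set $\rhoHat = 2\rho$ and pick $\alpha$ as the smaller of the Slater cap $\mu_c^2\tau/(\rhoHat\DDUsq)$ and a balance value of order $\mu_c^2\epsilon/(\rhoHat\DDUsq)$; the prescribed bound on $\epsin$ then absorbs both the inner-error term and the $O(\alpha^2)$ bias into a contraction factor of the form $1 - \Theta(\alpha)$, whenever $\Delta_k \geq \epsilon$. A standard geometric unrolling gives the stated bound on $N$, and the total complexity follows by multiplying with $\Tinner(\epsin)$. The main obstacle I anticipate is purely bookkeeping: ensuring simultaneously that the selected $\alpha$ lies in $(0, \alphaBar]$ (which is exactly where the assumptions $\tau \leq \rhoHat\DDUsq/(2\mu_c^2)$ and $\epsilon \leq 3\rhoHat\DDUsq/(2\mu_c^2)$ enter), that $\epsin$ is small enough to preserve the contraction, and that the $\tau$--feasibility propagates through the induction even though the inner oracle only certifies $\phiK{2}(\xxKnext) \leq \tau$ rather than $\leq 0$.
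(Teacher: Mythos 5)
Your proposal is correct and follows essentially the same route as the paper's proof: the same comparison point $\xxK_{\alpha}$, the same invocation of the HC--Slater lemma for subproblem feasibility, the identical one-step recursion $\Delta_{k+1} \leq (1-\alpha)\Delta_k + \frac{\rhoHat\,\alpha^2\DDUsq}{2\mu_c^2} + \epsin$, and the same parameter tuning and geometric unrolling (the paper bounds the partial geometric sum by $1/\alpha$ rather than arguing contraction conditionally on $\Delta_k \geq \epsilon$, but this is only a bookkeeping variant). Your explicit induction for $\tau$--feasibility of the outer iterates matches what the paper states in its proof sketch preceding the theorem.
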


In \Cref{sec:ComplexityAnalysis}, we will compute the bounds for the oracle
complexity $\Ttotal$ as a function of $\varepsilon,$ specifying $\Tinner,$
but first we prove the above central theorem.

\begin{proof}
 By the HC--Slater's \Cref{le:Slater_PPM_inner}.\ref{le:Slater_PPM_inner-ITEM1},
 we have the point
 $$\xx^{(k)}_\alpha \define c^{-1}\left((1-\alpha) c(\xxK) +
  \alpha c(\xxOpt)\right)$$
 with $\alpha \leq
  \frac{\mu_c^2 \tau}{\rhoHat \DDUsq} $ is feasible for \eqref{eq:InexactPPMProblemInnerLoopHCHC} at iteration $k$, i.e.,
 $\xx^{(k)}_\alpha \in \XX \cap \{F_2(\innerEmpty{}) + \frac{\rhoHat}{2}
  \norm{\innerEmpty{} - \xxK}^2 - \tau \leq 0\}$. Since $\Oracle_{\epsin}$ satisfies $\phiK{1}(\xxKnext) \leq \phiK{1}(\xxHatKnext) + \epsin$ and using the optimality condition for $\xxHatKnext$, we derive
 \begin{align}
  F_1(\xxKnext) - \Fopt & \leq F_1(\xxKnext) - \Fopt + \frac{\rhoHat}{2}
  \norm{\xxKnext - \xxK}^2 \notag                                                                                                                            \\
                        & = \phiK{1}(\xxKnext) - \Fopt \notag                                                                                                \\
                        & \leq \phiK{1}(\xxHatKnext)  + \epsin - \Fopt \notag                                                                                \\
                        & \leq \phiK{1}(\xxK_{\alpha}) + \epsin - \Fopt \label{eq:req_main_x_alpha1}
  \\
                        & = F_1(\xx^{(k)}_\alpha) - \Fopt + \frac{\rhoHat}{2}
  \norm{\xx^{(k)}_\alpha - \xxK}^2  + \epsin \notag                                                                                                          \\
                        & \leq (1-\alpha) \left(F_1(\xxK)- \Fopt \right) + \frac{\rhoHat\DDUsq \alpha^2}{2 \mu_c^2}  + \epsin , \label{eq:req_main_x_alpha2}
 \end{align}
 where the second inequality holds by the property of $\Oracle_{\epsin}$, the third is by optimality of $\xxHatKnext$ and feasibility of $\xxK_\alpha$.
 The last step is due to \eqref{eq:HCContractionInequalityObjective},
 \eqref{eq:HCContractionInequalityNorm}, and the bound on the diameter of $\UU$.
 By unrolling the above recursion from $k=1, \ldots, N-1$ and using that the partial sum
 of the geometric series
 is bounded by $\nfr{1}{\alpha}$, we derive
 \begin{align*}
  F_1(\xxN) - \Fopt \leq (1-\alpha)^N (F_1(\xx^{(0)}) - \Fopt)
  + \frac{\rhoHat\DDUsq \alpha}{2 \mu_c^2} + \frac{\epsin}{\alpha}  \overset{\space}{\leq} \epsilon,
 \end{align*}
 where the last step holds by the choice of $\epsin$, $N$ as in the Theorem statement, and $\alpha \leq \min\left\{\frac{2\mu_c^2 \epsilon}{3\rhoHat\DDUsq}, \frac{\mu_c^2 \tau}{\rhoHat \DDUsq}\right\} \in [0,1].$
\end{proof}

\begin{remark}
 It is possible to improve the above oracle complexity in the case of $\mu_H$--hidden strong convexity,
 cf. \Cref{thm:MainResultHSC-PPM} in
 \Cref{appendix:InexactPPMunderHiddenStrongConvexity}.
\end{remark}

\subsection{Oracle Complexity Analysis for IPPM}
\label{sec:ComplexityAnalysis}

In this section, we specify algorithms for solving the sub-problem in
\Cref{algo:PPM} and compute the upper bounds on the oracle complexity of
the resulting methods. We distinguish between two important cases. First,
we consider the problem without any CQ in the non-smooth and smooth
settings. Second, we establish faster convergence assuming a Slater point
exists for problem \eqref{eq:MainProblem}.

\subsubsection{Oracle complexity without Slater's assumption}
\label{subsubsec:IPPM_no_Slater}
To compute the total oracle ((sub-)gradient and function evaluation)
complexity of IPPM in the non-smooth setting without any constraint
qualification (CQ), we use a slight modification of the SwSG method
\cite{polyakGeneralMethodSolving1967, lanAlgorithmsStochasticOptimization2020, jiaFirstOrderMethodsNonsmooth2025} in
place of $\Oracle_{\epsin}$, see \Cref{algo:SwitchingSubgradient}. We use SwSG since it is optimal for non-smooth convex (and strongly convex) constrained optimization and does not require CQ assumptions.

\begin{algorithm}[H]
 \caption{$\mathrm{SwSG}(\phi_1^{(k)}, \phi_2^{(k)}, \xxK, \Tinner, \tau)$
  \\Switching Sub-Gradient Method
  \cite[Algo 1]{jiaFirstOrderMethodsNonsmooth2025} for solving \eqref{eq:InexactPPMProblemInnerLoopHCHC}}
 \label{algo:SwitchingSubgradient}
 \begin{algorithmic}[1]
  \State \textbf{Input:} Regularized objective $\phiK{1}$ and constraint $\phiK{2}$,
  current IPPM-iterate $\xxK \in \XX$, number of steps $\Tinner \in \NN$, constraint violation budget $\tau$
  \State Define precision $\epsin>0$, parameter $\alpha > 0$, and stepsizes $(\eta_t)_t$ as in \Cref{cor:FinalRateNonSmoothIPPM}
  \State Initialize $\zzZero \gets \xx^{(k)} \in \XX$
  \State Define $\feas \gets \emptyset, \infeas \gets \emptyset$
  \State {\highlight{Shift constraint $\phiKshift{2}(\xx) \define \phiK{2}(\xx) - \tau + \frac{\alpha \tau}{3}$, $\xx \in \XX$}}
  \For{$t = 1, 2, 3, \dots, \Tinner - 1$}
  \If{$\phiKshift{2}(\zzT) \leq \epsin$} \algorithmiccomment{feasible}
  \State $\zzTnext \gets \proj_{\XX}\paren{\zzT - \eta_t \zeta^{(t)}_1}$,
  with $\zeta^{(t)}_1 \in \partial \phiK{1}(\zzT)$,
  $\feas \gets \feas \cup \cbrac{t}$
  \Else \algorithmiccomment{infeasible}
  \State $\zzTnext \gets \proj_{\XX}\paren{\zzT - \eta_t \zeta^{(t)}_2}$,
  with $\zeta^{(t)}_2 \in \partial \phiK{2}(\zzT)$,
  $\infeas \gets \infeas \cup \cbrac{t}$
  \EndIf
  \EndFor
  \State \Return $\xxKnext \gets \sum_{t\in \feas} (t+1) \zzT / \left( \sum_{t\in \feas} (t+1) \right)$
 \end{algorithmic}
\end{algorithm}

\begin{corollary}[IPPM+SwSG, Non-smooth, No CQ]
 \label{cor:FinalRateNonSmoothIPPM}
 Under the assumptions of \Cref{thm:MainResultHCPPM}, when using the
 SwSG (\Cref{algo:SwitchingSubgradient}) method as the inner solver,
 the total number of sub-gradient calls and function evaluations
 required to achieve \eqref{eq:HCHCInexactPPMlastIterateResult} is given by:
 \begin{align*}
  \TtotalUp{\nonSmoothLabel} & \geq N \cdot \TinnerUp{SwSG}\paren*{\epsin}              \\
                             & \geq \frac{3\rho\DDUsq}{\mu_c^2 \min\left\{ \varepsilon,
   \tau\right\} } \cdot \log\left(\frac{3 \Delta_0}{\epsilon}\right) \cdot \max\left\{
  \frac{216 (3 G^2 - 4\rho \FLB{2})  \DDUsq }{ \mu_c^2 \min\left\{ \varepsilon^2,
   \tau^2\right\} } ,
  \frac{51 \rho \DDX \DDU }{ \mu_c \min\left\{ \varepsilon,
   \tau\right\} }
  \right\},
 \end{align*}
 where $\FLB{2} \define \min_{\xx \in \XX} F_2(\xx) > -\infty$.
\end{corollary}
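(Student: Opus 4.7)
The plan is to bolt the per-subproblem complexity of the inner SwSG method \cite{jiaFirstOrderMethodsNonsmooth2025} onto the outer-loop count $N$ delivered by Theorem \ref{thm:MainResultHCPPM}. First, I would verify that each IPPM subproblem \eqref{eq:InexactPPMProblemInnerLoopHCHC} fits the setting analyzed for strongly convex SwSG. With the prescribed choice $\rhoHat = 2\rho$, the regularized functions $\phiK{1}, \phiK{2}$ are $\rho$-strongly convex on $\XX$; Assumption \ref{ass:AssumptionExactPPM} together with the diameter bound $\DDX \leq \DDU/\mu_c$ implies uniform bounds on the sub-gradients of $\phiK{1}, \phiK{2}$ of order $G + \rhoHat \DDX$, while the lower bound $\min_{\xx \in \XX} \phiK{2}(\xx) \geq \FLB{2}$ is preserved. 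Crucially, Lemma \ref{le:Slater_PPM_inner}.\ref{le:Slater_PPM_inner-ITEM1} furnishes the point $\xx^{(k)}_\alpha$ with $\phiK{2}(\xx^{(k)}_\alpha) - \tau \leq -\alpha\tau/2$ whenever $\alpha \leq \mu_c^2 \tau/(\rhoHat \DDUsq)$, so for the constraint shifted in line~5 of Algorithm \ref{algo:SwitchingSubgradient} we obtain $\phiKshift{2}(\xx^{(k)}_\alpha) \leq -\alpha\tau/6 < 0$; i.e., the shifted subproblem has an explicit $\alpha\tau/6$--Slater gap without any CQ on the original problem.

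Next, I would invoke the strongly-convex SwSG guarantee from \cite{jiaFirstOrderMethodsNonsmooth2025} applied to the shifted subproblem. It produces in
\[
 \TinnerUp{SwSG}(\epsin) \;=\; O\!\left(\frac{G^2 - \rho\, \FLB{2}}{\rho\, \epsin} \;+\; \frac{\rho\, \DDXsq}{\alpha\,\tau}\right)
\]
sub-gradient and function evaluations an iterate $\xxKnext$ that is feasible for the shifted constraint, hence $\phiK{2}(\xxKnext) \leq \tau - \alpha\tau/3 + \epsin \leq \tau$ (using $\epsin \leq \alpha\tau/3$ from the parameter choice), matching the requirement \eqref{eq:Feasible_oracle_PPM}, and that moreover satisfies $\phiK{1}(\xxKnext) - \phiK{1}(\xxHatKnext) \leq \epsin$.

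It then remains to substitute the prescribed parameters from Theorem \ref{thm:MainResultHCPPM}, namely $\alpha = \Theta(\mu_c^2 \min\{\epsilon, \tau\}/(\rho \DDUsq))$ and $\epsin = \Theta(\mu_c^2\, \epsilon \min\{\epsilon,\tau\}/(\rho \DDUsq))$, into the SwSG rate. Using $\DDX = \DDU/\mu_c$, the first term becomes $O((G^2 - \rho \FLB{2})\, \DDUsq/(\mu_c^2 \min\{\epsilon^2, \tau^2\}))$, reproducing the first entry of the $\max$ after insertion of the explicit constants from \cite{jiaFirstOrderMethodsNonsmooth2025}; the second term simplifies to $O(\rho\, \DDX\, \DDU /(\mu_c \min\{\epsilon, \tau\}))$, matching the second entry. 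Multiplying by the outer-loop count
\[
 N \;=\; O\!\left( \frac{\rho \DDUsq}{\mu_c^2 \min\{\epsilon, \tau\}} \cdot \log\!\left(\frac{\Delta_0}{\epsilon}\right) \right)
\]
from Theorem \ref{thm:MainResultHCPPM} then yields the claimed bound on $\TtotalUp{\nonSmoothLabel}$.

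The main obstacle is constant bookkeeping: one must ensure the inner shift $\alpha\tau/3$ is strictly below the Slater gap $\alpha\tau/2$ available from Lemma \ref{le:Slater_PPM_inner}, that the inner tolerance $\epsin$ chosen in Theorem \ref{thm:MainResultHCPPM} is also no larger than $\alpha\tau/3$ so that the SwSG output remains feasible for the \emph{unshifted} constraint $\phiK{2} \leq \tau$, and that both scalings $\epsin \sim \mu_c^2 \epsilon^2/(\rho \DDUsq)$ and $\epsin \sim \mu_c^2 \epsilon \tau/(\rho\DDUsq)$ are tracked jointly so that both terms in the final $\max$ emerge in their stated form across the two regimes $\epsilon \lessgtr \tau$.
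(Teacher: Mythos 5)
Your proposal follows essentially the same route as the paper: invoke Lemma~\ref{le:Slater_PPM_inner}.\ref{le:Slater_PPM_inner-ITEM1} to certify a Slater gap for the subproblem shifted by $-\tau+\tfrac{\alpha\tau}{3}$, run SwSG with tolerance $\epsin\leq\tfrac{\alpha\tau}{3}$ so the output is feasible for the unshifted constraint, plug in $\alpha$ and $\epsin$ from Theorem~\ref{thm:MainResultHCPPM}, cite the complexity bound of \cite{jiaFirstOrderMethodsNonsmooth2025} (Cor.~3.3), and multiply by $N$. The only caveat is that your ansatz for the second term of $\TinnerUp{SwSG}$ (namely $\rho\DDXsq/(\alpha\tau)$) does not literally simplify to the stated $\rho\DDX\DDU/(\mu_c\min\{\varepsilon,\tau\})$ after substituting $\alpha$; this is a constant-bookkeeping detail of the cited corollary that the paper likewise defers to the reference, and it does not affect the overall argument.
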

\begin{proof}
 The proof is deferred to \Cref{subsec:ProofofCorollaries}.
\end{proof}

Remarkably, the above result matches (in terms of $\varepsilon$) the
sub-gradient complexity for solving hidden convex unconstrained problems
\cite{fatkhullinStochasticOptimizationHidden2024} (see also
\Cref{corr:UnconstrainedHiddenConvex} in
\Cref{subsubsec:OptCritUnconstrainedcase} for discussion about the
optimality criterion). Importantly, in the constrained setting, $F_2(\cdot)
 \not \equiv 0$ requires no constraint qualification (CQ) conditions. This
is consistent with similar results in convex case, where the oracle
complexity of SwSG method \cite{polyakGeneralMethodSolving1967} matches the
one of (sub-)gradient method, i.e., $\OO{\varepsilon^{-2}}$.

Next, we consider the smooth setting, where the oracle complexity can be
further improved from $\OOTilde{\varepsilon^{-3}}$ to
$\OOTilde{\varepsilon^{-2}}$ under hidden convexity. To achieve this, we do
not require CQs, but we need to use a faster inner solver to reduce the
inner solver complexity, $\Tinner\paren{\epsin}.$ We use a suitably
modified (shifted) version of ACGD algorithm from
\cite{zhangSolvingConvexSmooth2022}, see \Cref{algo:ACGD}. ACGD achieves
the optimal oracle complexities in (strongly) convex smooth constrained
optimization (improving, e.g., SwSG), which translates to improvements of
the total oracle complexities for our problem.

\begin{algorithm}[H]
 \caption{$\mathrm{ACGD}(\phi_1^{(k)}, \phi_2^{(k)}, \xxK, \Tinner, \tau)$\\
  Accelerated Constrained Gradient Descent Method
  \cite[Algo 1]{zhangSolvingConvexSmooth2022} for solving
  \eqref{eq:InexactPPMProblemInnerLoopHCHC}}
 \label{algo:ACGD}
 \begin{algorithmic}[1]
  \State \textbf{Input:} Regularized objective $\phiK{1}$ and constraint $\phiK{2}$,
  current PPM-iterate $\xxK \in \XX$,
  constraint violation budget $\tau$,
  \State
  Define stepsizes $\{\theta_t\}, \{\eta_t\}, \{\tau_t\}$,
  and weights $\{\omega_t\}$ according to
  \Cref{cor:FinalRateSmoothIPPM_ACGD} or
  \Cref{cor:FinalRateSmoothIPPM_ACGD_Slater} if Slater's condition holds
  \State Counter $t \gets 0$ and initialize $\zzTminusOne \gets \xxK,
   \zzTUnder \gets \xxK, \zzT \gets \xx^{(k)} \in \XX$
  \State Set $\pi^{(0)} \gets \nabla \phiK{1}(\zzZero)$, $\nu^{(0)} \gets \nabla \phiK{2}(\zzZero)$
  \If{Slater's condition holds}
  \State $\offset \gets - \tau + \frac{\beta \theta}{3}$ according to \Cref{cor:FinalRateSmoothIPPM_ACGD_Slater}
  \Else
  \State $\offset \gets - \tau + \frac{\alpha \tau}{3}$
  according to \Cref{cor:FinalRateSmoothIPPM_ACGD}
  \EndIf
  \State {\highlight{Shift constraint $\phiKshift{2}(\xx) \define \phiK{2}(\xx) - b$, $\xx \in \XX$}}
  \For{$t = 1, 2, 3, \dots, \Tinner$}
  \State Set $\zzTUnder \gets (\tau_t \zzTminusOneUnder + \zzTildeT)/(1 + \tau_t)$
  where $\zzTildeT = \zzTminusOne + \theta_t (\zzTminusOne - \zzTminusTwo)$
  \State Set $\pi^{(t)} \gets \nabla \phiK{1}(\zzTUnder)$ and
  $\nu^{(t)} \gets \nabla \phiKshift{2}(\zzTUnder)$
  \State Solve
  \[
   \zzT \gets \arg\min_{\xx \in \XX} \cbrac*{ \langle \pi^{(t)}, \xx \rangle + \eta_t \norm{ \xx - \zzTminusOne}^2 / 2
    \;\; \text{s.t. } \nu^{(t)}\cdot(\xx - \zzTUnder) + \phiKshift{2}(\zzTUnder) \leq 0 }
  \]
  \EndFor
  \State \Return $\xxKnext \gets \sum_{t=1}^{\Tinner} \omega_t \zzT / \left( \sum_{t=1}^{\Tinner} \omega_t \right)$
 \end{algorithmic}
\end{algorithm}

\begin{corollary}[IPPM+ACGD, Smooth, No CQ]
 \label{cor:FinalRateSmoothIPPM_ACGD}
 Under the assumptions of \Cref{thm:MainResultHCPPM} and assuming that
 $F_1, F_2$ are $L$-smooth, when using the ACGD (\Cref{algo:ACGD}) method as the inner solver, the total number of gradient calls and function evaluations
 required to achieve \eqref{eq:HCHCInexactPPMlastIterateResult} is given by:
 \begin{align*}
  \TtotalUp{\smoothLabel} & \geq N \cdot \TinnerUp{ACGD}\paren*{\epsin}           \\
                          & = \frac{3\rho\DDUsq}{\mu_c^2 \min\left\{ \varepsilon,
   \tau\right\} }
  \cdot \OOTilde{\frac{ \DDU \sqrt{L (\FUB{1} - \FLB{1} + \varepsilon)}}{ \mu_c \sqrt{\min\left\{\varepsilon,
     \tau \right\} \tau } }},
 \end{align*}
 where $\FUB{1} \define \max_{\xx \in \XX} F_1(\xx)$, $\FLB{1} \define \min_{\xx \in \XX} F_1(\xx).$
\end{corollary}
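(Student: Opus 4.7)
The plan is to assemble the total complexity as the product of the outer-loop count $N$ from \Cref{thm:MainResultHCPPM} and the per-subproblem inner-oracle count $\TinnerUp{ACGD}(\epsin)$ obtained by instantiating the ACGD convergence guarantee on the shifted subproblem \eqref{eq:InexactPPMProblemInnerLoopHCHC}. The outer factor $\frac{3\rho\DDUsq}{\mu_c^2\min\{\varepsilon,\tau\}}\log(3\Delta_0/\varepsilon)$ is taken verbatim from \Cref{thm:MainResultHCPPM} using $\rhoHat=2\rho$; what remains is to estimate $\TinnerUp{ACGD}(\epsin)$ for a strongly convex, smooth, functionally constrained problem that admits a known Slater point.

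First, I would verify the structural prerequisites for ACGD on each subproblem. Since $F_1, F_2$ are $\rho$-weakly convex and $L$-smooth, the regularized functions $\phiK{1}, \phiK{2}$ are $(\rhoHat-\rho)=\rho$-strongly convex and $(L+\rhoHat)$-smooth. By \Cref{le:Slater_PPM_inner}.\ref{le:Slater_PPM_inner-ITEM1}, the shifted constraint $\phiKshift{2}$ admits the Slater point $\xx^{(k)}_\alpha$ with gap at least $\tfrac{\alpha\tau}{2}$ for $\alpha=\min\{1,\frac{\mu_c^2\tau}{\rhoHat\DDUsq}\}$, which in the regime of interest simplifies to $\alpha=\frac{\mu_c^2\tau}{\rhoHat\DDUsq}$ and hence Slater gap $\Theta(\mu_c^2\tau^2/(\rho\DDUsq))$. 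The shift $\offset=-\tau+\tfrac{\alpha\tau}{3}$ in \Cref{algo:ACGD} is chosen so that any feasible iterate for the shifted problem is automatically $\tau$-feasible for the original \eqref{eq:MainProblem}, preserving the chain of $\tau$-feasibility required by the outer IPPM argument.

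Next, I would invoke the convergence guarantee for ACGD on strongly convex smooth constrained problems (cf.\ \cite{zhangSolvingConvexSmooth2022}), which delivers an $(\epsin,0)$-feasible solution in the sense of \eqref{eq:Feasible_oracle_PPM} in
\[
\TinnerUp{ACGD}(\epsin)=\OOTilde{\frac{\DDU\sqrt{L(\FUB{1}-\FLB{1}+\varepsilon)}}{\mu_c\sqrt{\min\{\varepsilon,\tau\}\tau}}}
\]
oracle calls, once one substitutes the problem-specific constants: the strong-convexity modulus $\rho$, the smoothness constant $L+\rhoHat=\OO{L}$ (since $L\gtrsim\rho$ in the smooth case of interest), the initial suboptimality on each subproblem (bounded in terms of $\FUB{1}-\FLB{1}+\varepsilon$ through \eqref{eq:req_main_x_alpha1}--\eqref{eq:req_main_x_alpha2}), the Slater gap $\Theta(\mu_c^2\tau^2/(\rho\DDUsq))$, and the inner tolerance $\epsin\leq\tfrac{\varepsilon}{3}\min\{\frac{2\mu_c^2\varepsilon}{3\rhoHat\DDUsq},\frac{\mu_c^2\tau}{\rhoHat\DDUsq}\}$; the dependence on $1/\epsin$ is logarithmic and is absorbed into $\OOTilde{\cdot}$.

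Multiplying $N$ by $\TinnerUp{ACGD}(\epsin)$ then yields the claimed bound. The main obstacle is book-keeping: correctly translating the subproblem-specific parameters (strong convexity $\rho$, smoothness $L$, diameter $\DDU/\mu_c$, Slater gap $\Theta(\mu_c^2\tau^2/(\rho\DDUsq))$, initial gap $\OO{\FUB{1}-\FLB{1}+\varepsilon}$, and accuracy $\epsin$) through the ACGD rate in a way that the $\sqrt{L}$, $\sqrt{\min\{\varepsilon,\tau\}\,\tau}$, and $\DDU/\mu_c$ terms combine exactly as in the corollary statement, while ensuring that the logarithmic factor from $\log(1/\epsin)$ is absorbed into $\OOTilde{\cdot}$ and does not interact poorly with the outer $\log(3\Delta_0/\varepsilon)$ factor. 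All detailed calculations are routine and would be deferred to \Cref{subsec:ProofofCorollaries}.
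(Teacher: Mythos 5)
Your proposal is correct and follows essentially the same route as the paper: HC--Slater's Lemma gives the subproblem an $\OO{\alpha\tau}$ Slater gap, the shifted constraint preserves $\tau$-feasibility, and ACGD's strongly convex rate is instantiated with the subproblem constants and multiplied by the outer count $N$ from \Cref{thm:MainResultHCPPM}. The one step you defer as ``book-keeping'' is exactly what the paper makes explicit: the Slater gap enters ACGD's complexity through a bound on the subproblem's optimal dual multiplier, $\lambdaOpt \leq (\FUB{1}-\FLB{1}+\varepsilon)/(\alpha\tau/6)$ via \cite[Lem.\ 3.1.21]{nesterovLecturesConvexOptimization2018}, which then feeds into the Lipschitz constant $L(\Lambda_1)$ of the Lagrangian and yields the $\sqrt{L(\Lambda_1)/\rho}$ inner complexity you wrote down.
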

\begin{proof}
 The proof idea of the above result is to upper bound the Lagrange
 multiplier of the sub-problem \eqref{eq:InexactPPMProblemInnerLoopHCHC},
 and use it to calculate the oracle complexity bound of ACGD, see
 \Cref{subsec:ProofofCorollaries} for details.
\end{proof}

\subsubsection{Oracle complexity with Slater's assumption}
\label{subsubsec:IPPM_with_Slater}
Unfortunately, the oracle complexity in
\Cref{cor:FinalRateSmoothIPPM_ACGD} does not recover the oracle
complexity of unconstrained hidden convex problems in the smooth case
\cite{zhangVariationalPolicyGradient2020,fatkhullinStochasticOptimizationHidden2024}, which is $\OOTilde{\varepsilon^{-1}}.$ The loss in the complexity happens since the oracle complexity of the ACGD depends on
the bound of the optimal dual variable $\lambda^*$, which in our case
happens to be large -- of order $\OOTilde{\varepsilon^{-2}}$ (assuming $\varepsilon =
 \tau$),
see the proof details in \Cref{subsec:ProofofCorollaries}. The following Corollary improves this
complexity in the setting when Slater's condition holds.

\begin{corollary}[IPPM+ACGD, Smooth, Slater's Condition]
 \label{cor:FinalRateSmoothIPPM_ACGD_Slater}
 Let the assumptions of \Cref{thm:MainResultHCPPM} hold and that
 $F_1, F_2$ are $L$-smooth. Assume additionally that \eqref{eq:MainProblem} satisfies $\theta$--Slater's
 condition with $\frac{\mu_c^2 \theta}{\rhoHat \DDUsq} \leq 1$. Then when using ACGD (\Cref{algo:ACGD}) method as the inner solver,
 the total number of gradient calls and function evaluations
 required to achieve \eqref{eq:HCHCInexactPPMlastIterateResult} is given by:
 \begin{align*}
  \TtotalUp{\smoothSlaterLabel} & \geq N \cdot \TinnerUp{ACGD}\paren*{\epsin}           \\
                                & = \frac{3\rho\DDUsq}{\mu_c^2 \min\left\{ \varepsilon,
   \tau\right\} }
  \cdot \OOTilde{\frac{ \DDU \sqrt{L (\FUB{1} - \FLB{1})}}{ \mu_c \theta }} ,
 \end{align*}
 where $\FUB{1}\define \max_{\xx \in \XX} F_1(\xx)$, $\FLB{1} \define \min_{\xx \in \XX} F_1(\xx)$.
\end{corollary}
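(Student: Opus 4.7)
The plan is to mirror the proof of \Cref{cor:FinalRateSmoothIPPM_ACGD}, but to invoke \Cref{le:Slater_PPM_inner}.\ref{le:Slater_PPM_inner-ITEM2} in place of \Cref{le:Slater_PPM_inner}.\ref{le:Slater_PPM_inner-ITEM1}. This replaces the Slater gap of each IPPM subproblem from $\Theta(\mu_c^2\tau^2/(\rhoHat\DDUsq))$ (which scales with the vanishing budget $\tau$) by the much larger $\Theta(\mu_c^2\theta^2/(\rhoHat\DDUsq))$ driven by the original $\theta$-Slater point. This $\tau$-independent gap is exactly what converts the $1/\sqrt{\tau\min\{\varepsilon,\tau\}}$ factor appearing in \Cref{cor:FinalRateSmoothIPPM_ACGD} into the clean $1/\theta$ factor claimed here.

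First I would set $\beta \define \frac{\mu_c^2\theta}{\rhoHat\DDUsq}$, which lies in $[0,1]$ by the standing assumption of the corollary. The second part of the HC--Slater Lemma then certifies that each subproblem \eqref{eq:InexactPPMProblemInnerLoopHCHC} admits the Slater point $\yySlater^{(k)}_\beta$ with gap $\Theta_{\mathrm{sub}} \define \frac{\beta\theta}{2}$. Since strong duality now holds on each (convex, Slater-satisfying) subproblem, the standard Slater-based dual bound yields
\[
\lambda^*_{\mathrm{sub}} \;\leq\; \frac{\phi_1^{(k)}(\yySlater^{(k)}_\beta) - \phi_1^{(k)}(\xxHatKnext)}{\Theta_{\mathrm{sub}}} \;=\; \OO{\frac{\rhoHat\DDUsq(\FUB{1}-\FLB{1})}{\mu_c^2\theta^2}},
\]
where I use that $\phi_1^{(k)}$ is bounded on $\XX$ by $\FUB{1}$ plus an $\OO{\rhoHat\DDX^2}$ regularization term absorbed via $\DDX\leq\DDU/\mu_c$.

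Next I would feed this dual bound into the ACGD iteration complexity of \cite{zhangSolvingConvexSmooth2022}, applied to the shifted subproblem in \Cref{algo:ACGD} with $\offset = -\tau + \frac{\beta\theta}{3}$. For the $\rhoHat$-strongly convex, $(L+\rhoHat)$-smooth objective $\phi_1^{(k)}$ and $L$-smooth constraint with dual multiplier bounded as above, ACGD reaches the target tolerance $\epsin$ in
\[
\TinnerUp{ACGD}(\epsin) \;=\; \OOTilde{\sqrt{\tfrac{(L+\rhoHat)(1+\lambda^*_{\mathrm{sub}})}{\rhoHat}}} \;=\; \OOTilde{\frac{\DDU\sqrt{L(\FUB{1}-\FLB{1})}}{\mu_c\theta}}
\]
iterations, with logarithmic factors in $1/\epsin$ absorbed into $\OOTilde{\cdot}$ (and $\epsin$ as prescribed by \Cref{thm:MainResultHCPPM}). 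Multiplying by the outer count $N$ from that theorem produces the claimed $\TtotalUp{\smoothSlaterLabel}$ bound.

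The hard part will be twofold. First, the ACGD guarantee of \cite{zhangSolvingConvexSmooth2022} must be instantiated on the \emph{shifted} constraint $\phiKshift{2}$: because the shift $\offset$ introduces an $\Omega(\beta\theta)$ safety margin, I have to check that any $(\epsin,0)$-accurate solution of the shifted problem still fulfils \eqref{eq:Feasible_oracle_PPM} for the original subproblem, i.e.\ $\phi_2^{(k)}(\xxKnext)\leq\tau$, which requires a careful choice of the shift relative to $\tau$. Second, tracking the exact dependence of ACGD's rate on $\lambda^*_{\mathrm{sub}}$ versus $\rhoHat$ and $L$ requires reading off Zhang et al.'s bound in the regime where the dual term dominates---this is the step that turns the generic $\sqrt{(L+\rhoHat)\lambda^*_{\mathrm{sub}}/\rhoHat}$ scaling into the clean $\sqrt{L(\FUB{1}-\FLB{1})}/\theta$ form appearing in the statement.
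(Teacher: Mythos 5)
Your proposal is correct and follows essentially the same route as the paper's proof: invoke Item~2 of \Cref{le:Slater_PPM_inner} to get a $\Theta(\beta\theta)$ Slater gap for each shifted subproblem, bound the subproblem's Lagrange multiplier via \cite[Lem.~3.1.21]{nesterovLecturesConvexOptimization2018} using $\FUB{1}-\FLB{1}$ over that gap, and plug the resulting $L(\Lambda_1)$ into the ACGD complexity of \cite{zhangSolvingConvexSmooth2022} before multiplying by $N$. The two "hard parts" you flag (feasibility of the shifted output for the original subproblem via $\epsin \leq \frac{\beta\theta}{3}$, and reading off the $\sqrt{L(\Lambda_1)/\rhoHat}$ scaling) are exactly the steps the paper handles by mirroring its proof of \Cref{cor:FinalRateSmoothIPPM_ACGD}.
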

\begin{proof}
 The proof is analogous to the proof of \Cref{cor:FinalRateSmoothIPPM_ACGD} and
 the details are deferred to \Cref{subsec:ProofofCorollaries}.
\end{proof}

This result implies that if our original problem \eqref{eq:MainProblem}
satisfies the $\theta$--Slater's condition, the oracle complexity of
\Cref{algo:PPM} can be further improved to $\OOTilde{\theta^{-1}
  \varepsilon^{-1}}$, up to logarithmic factors, matching the complexity for
solving unconstrained hidden convex problems when $\theta = \OOTilde{1}.$
\begin{remark}
 The results of \Cref{cor:FinalRateSmoothIPPM_ACGD,cor:FinalRateSmoothIPPM_ACGD_Slater}
 only focus on the oracle complexity and do not take into account the computational complexity for
 solving quadratic sub-problems with linear constraints in ACGD method. However, the computational complexity
 (number of arithmetic operations and matrix-vector products) is mild and can be estimated based on Corollary 4 in
 \cite{zhangSolvingConvexSmooth2022}.
\end{remark}

\section{Bundle-level Approach}\label{sec:bundle_level}

As established in the previous section, the inexact proximal point approach
(IPPM) yields oracle complexities matching those for unconstrained hidden
convex optimization \cite{fatkhullinStochasticOptimizationHidden2024}. In
the smooth setting, however, our approach either relies on Slater's
condition (and results in $\OOTilde{\theta^{-1}\varepsilon^{-1}}$
complexity) or incurs a suboptimal $\OOTilde{\varepsilon^{-2}}$ oracle
complexity in terms of accuracy~$\varepsilon$. The former bound is not
satisfactory since it scales with the inverse of the Slater gap~$\theta$, a
quantity that may be arbitrarily close to zero in practice. This limitation
appears inherent to the IPPM approach, as our proof technique requires each
subproblem to satisfy Slater's condition. Obtaining the
$\OOTilde{\varepsilon^{-1}}$ rate (matching the unconstrained setting)
without Slater's condition would require an algorithm for smooth, strongly
convex, constrained optimization with $\OOTilde{\sqrt{\kappa}}$ complexity
independent of dual variable bounds. Such an algorithm is not known, and to
the best of our knowledge, its existence remains an open problem.

In this section, we aim to achieve fast $\OOTilde{\varepsilon^{-1}}$
gradient complexity for hidden convex problems without Slater's condition.
To do so, we take a different approach based on the cutting
plane/bundle-level idea \cite{lemarechal1995new,lan2015bundle}. First, we
want to highlight that the existing theory of bundle-level type methods is
predominantly limited to the convex setting. This is because the core idea
of this approach is to build a global lower model of the objective using
linear or piece-wise linear approximations. While such philosophy is
powerful for convex programming, it fails even on simple non-convex
problems. Assume we know the optimal value (level) $\Fopt$ and consider a
variant of bundle-level (BL) method from \cite{devanathan2024polyak}, which
has an update rule:

\begin{align}\label{eq:Polyak_Minorant}
 \xxTnext & = \argmin_{\xx \in \XX} \, \norm{\xx - \xxT}^2,
 \tag{StarBL}                                                                            \\
 \text{s.t. } \;\;
          & F_1(\xxT) + \langle\nabla F_1(\xxT), \xxT - \xx \rangle  \leq \Fopt , \notag \\
          & F_2(\xxT) + \langle\nabla F_2(\xxT), \xxT - \xx \rangle \leq 0 . \notag
\end{align}

This algorithm finds the next point $\xxTnext$ as the projection of the
current iterate $\xxT$ to the intersection of two linear constraints. We
refer to this algorithm \eqref{eq:Polyak_Minorant} as it uses the optimal
value $\Fopt.$ While this method converges for convex problems (see
\cite{devanathan2024polyak,deng2024uniformly}), it may fail for non-convex
problems.

\begin{figure}
 \centering
 \begin{subfigure}[t]{0.48\textwidth}
  \centering
  \includegraphics[height=.9\linewidth, keepaspectratio]{
   ./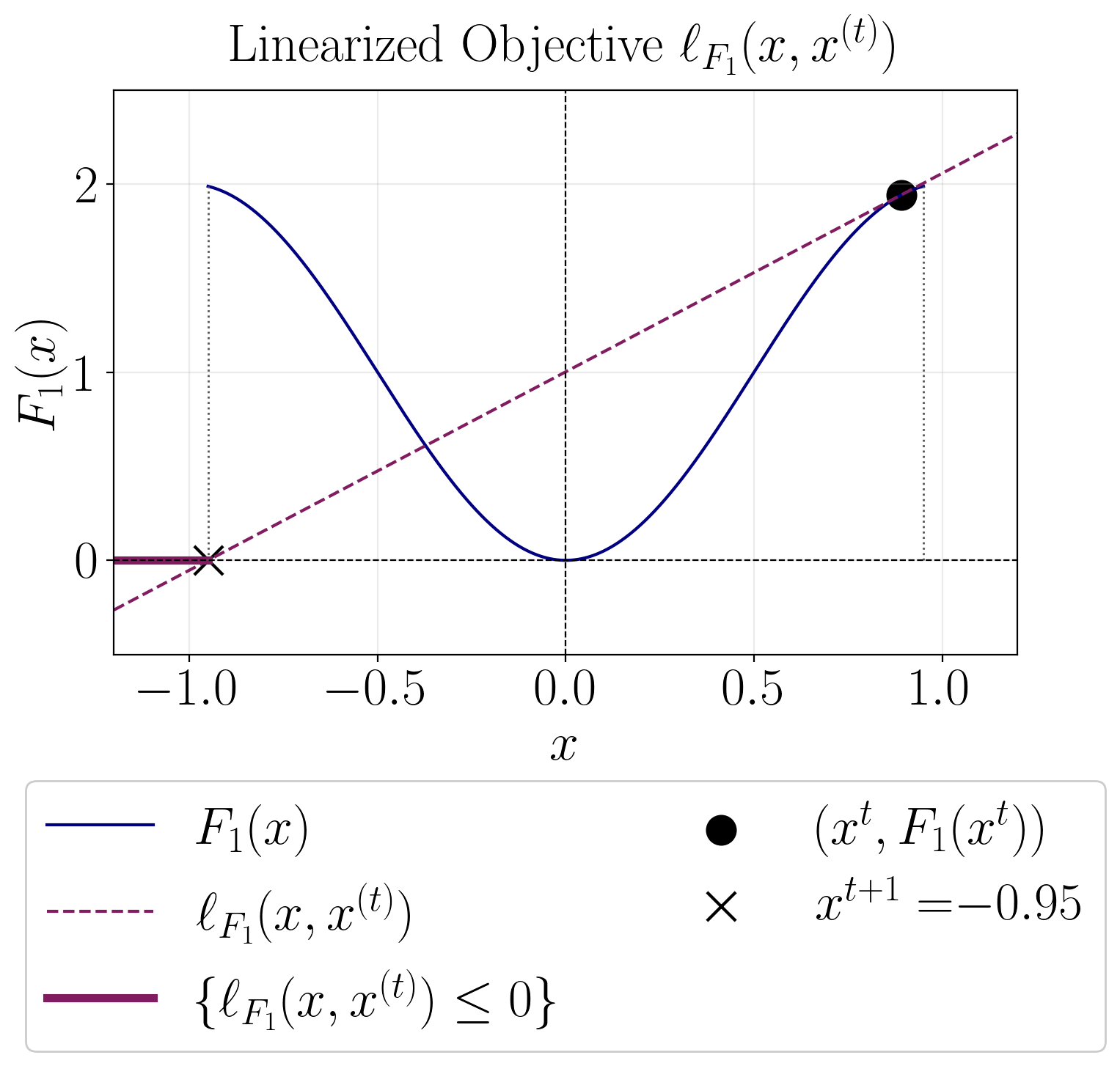
  }
  \caption[Linearized Objective Bundle-level Approach]{
  Illustration of the linearized objective \textit{without shifting},
  $\ell_{F_1}(x, x^{(t)}),$ over the set $\XX=[-0.95,0.95]$. Starting at $x^{(t)} = 0.891$,
  the lack of shifting causes \eqref{eq:Polyak_Minorant} to diverge; the next iterate $x^{(t+1)} = -0.95$ is maximum on $\XX.$}
  \label{fig:polyak_minorant_linearized_objective}
 \end{subfigure}
 \hfill
 \begin{subfigure}[t]{0.5\textwidth}
  \centering
  \includegraphics[height=.9\linewidth, keepaspectratio]{
   ./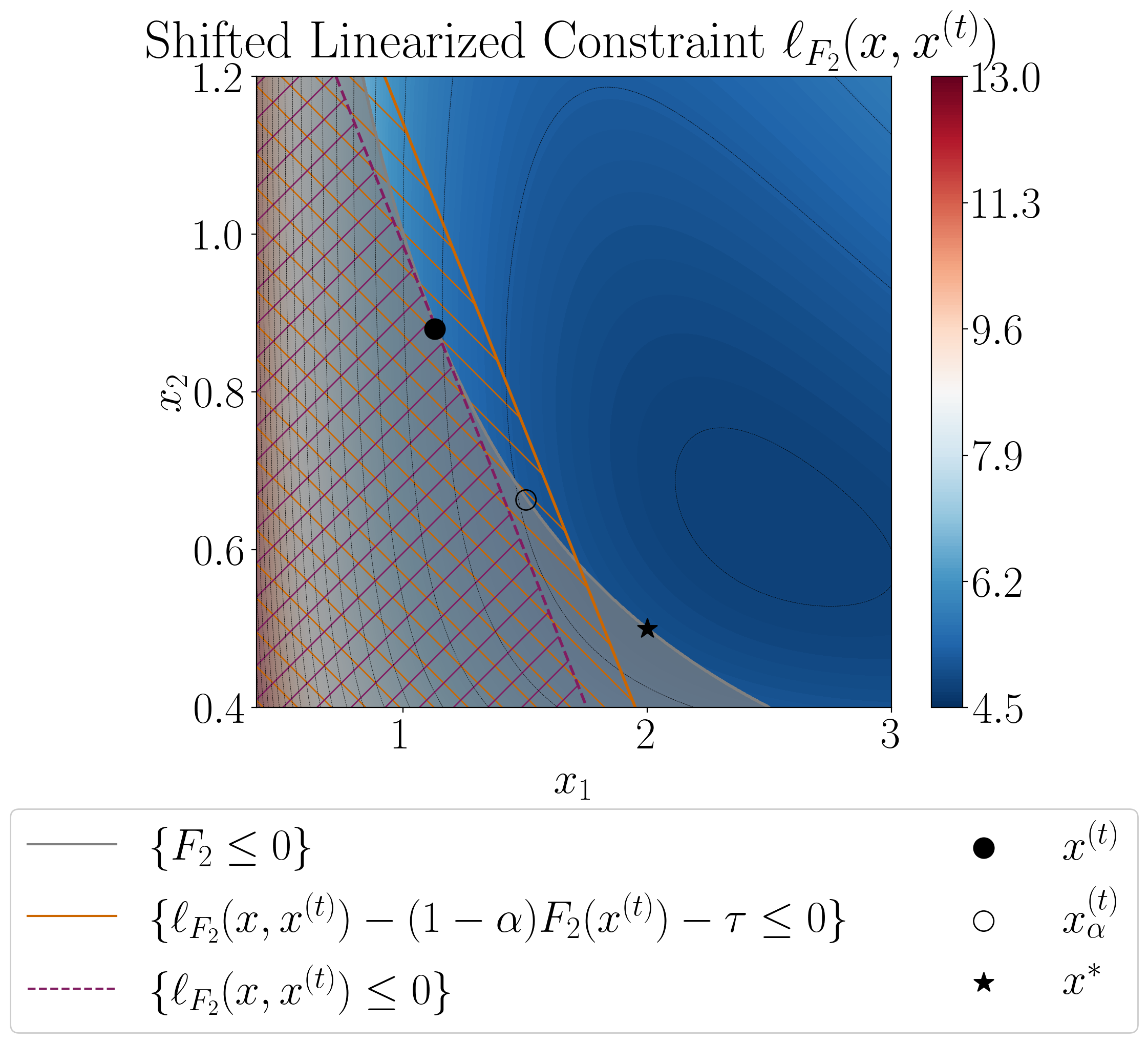
  }
  \caption[Linearized Constraint Bundle-level Approach]{
   Illustration of the \textit{shifted} linearized constraint, $\ell_{F_2}(\xx, \xx^{(t)}
    ),$ at $\xx^{(t)} = (1.34, 0.74)^\top$. The point
   $\xx^{(t)}_\alpha$ is feasible for \eqref{eq:modified_Polyak_Minorant_subproblem} subproblem when
   $\alpha, \tau$ are chosen according to \Cref{lemma:polyak_minorant_approx_feasibility}
   and \Cref{thm:MainResultHCPolyakMinorant} respectively.
  }
  \label{fig:polyak_minorant_linearized_constraint}
 \end{subfigure}
 \hfill
 \begin{subfigure}{0.8\textwidth}
  \centering
  \includegraphics[height=.6\linewidth, keepaspectratio]{
   ./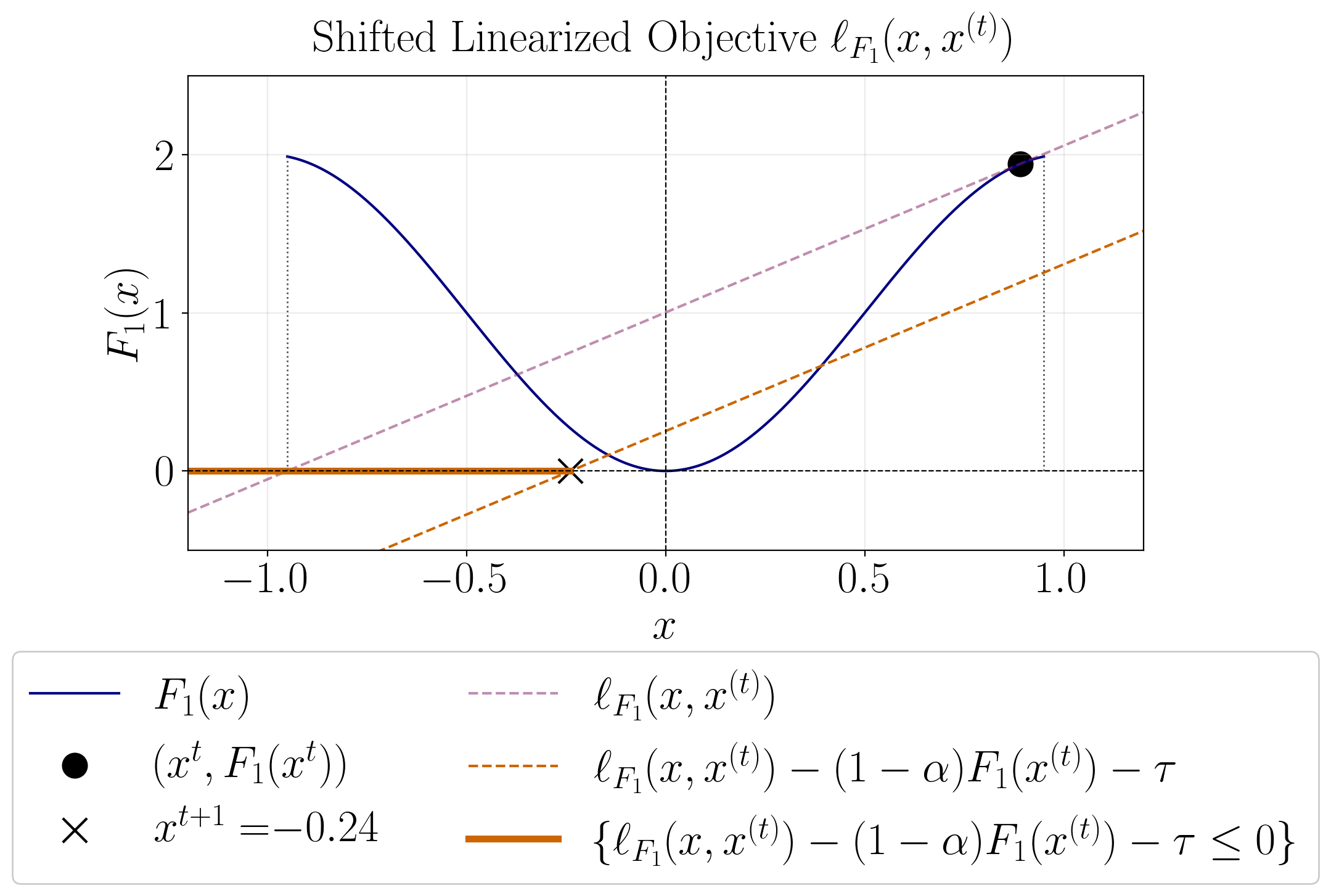
  }
  \caption[Shifted Linearized Objective Bundle-level Approach]{Illustration of the \textit{shift} of the linearized objective
   $\ell_{F_1}(x, x^{(t)})$. By introducing the shift, \eqref{eq:modified_Polyak_Minorant_subproblem} makes a more careful step than \eqref{eq:Polyak_Minorant} and improves the next iterate to $x^{(t+1)} = -0.24.$}
  \label{fig:polyak_minorant_linearized_objective_shifted}
 \end{subfigure}
 \caption{(a) and (c): To illustrate the need for the
  \textit{shift},
  we use the hidden convex
  function $F_1(\xx) \define 1 - \cos(\pi \cdot \xx)$ without
  constraints. (b): We illustrate the \textit{shifted constraint}
  on the constrained geometric programming
  example
  $F_1(\xx) \define \xx_1 \cdot \xx_2 + \frac{4}{\xx_1} + \frac{1}{\xx_2}$
  constrained to $\{F_2 \leq 0\}$, where
  $F_2(\xx) \define \xx_1 \cdot \xx_2 - 1$, cf.
  \eqref{eq:ToyExampleGeometricProgramming} in \Cref{subsec:ExampleNonsmoothConstrainedLeastSquares}.
 }
 \label{fig:PolyakMinorantMethod}
\end{figure}

\begin{example}
 Consider minimizing $F_1(x) =
  1 - \cos(\pi x) $ with $F_2(\innerEmpty{})\equiv 0$ over the set $\XX = [-0.95, 0.95].$ This problems is
 hidden convex with $c(x) = \sin(\tfrac{\pi}{2}\, x ),$ $H(u) = 2\, \uu^2$, $\mu_c \geq \tfrac{\pi}{2}\cos(\tfrac{\pi}{2} \cdot 0.95) > 0.$ However,
 starting from a point $\xxT \geq 0.891$ the method jumps to $\xxTnext =
  -0.95$. In subsequent iterations the algorithm infinitely oscillates between
 $\xx = \pm 0.95$, which correspond to the global maxima of this
 problem, see \Cref{fig:polyak_minorant_linearized_objective} for an
 illustration of this failure example.
\end{example}

The failures as in the above example are common and occur due to the
\textit{negative curvature} of the objective, which makes the linear
approximation an invalid global lower model of $F_1,$ see
\Cref{fig:polyak_minorant_linearized_constraint}. This motivates our key
algorithmic modification involving a carefully chosen shift of the
linearized objective and constraints. The introduced shifts relax the
linearized constraints allowing the bundle-level method to approach the
optimum as illustrated in
\Cref{fig:polyak_minorant_linearized_objective_shifted}.

Before we proceed with the formal algorithm description, we introduce some
useful notation for this section. We define the \emph{optimal value
 function} as $$ V(\eta) \define \min_{\xx \in \XX} v(\xx, \eta) , \qquad
 v(\xx, \eta) \define \max\left\{ F_1(\xx) - \eta, F_2(\xx) \right\} . $$
Notice that when the argument $\eta=\Fopt=\min_{\xx\in \XX} \{F_1(x), \,\,
 \text{s.t. }\, F_2(x)\leq 0\}$, then $V(\Fopt) = 0.$ Define a linear
minorant of a differentiable function $F: \XX \rightarrow \RR$ as $$
 \ell_F(\xx, \yy) = F(\yy) + \langle\nabla F(\yy), \xx - \yy \rangle . $$
Now we are ready to introduce the proposed algorithm. First, we consider
the case when the optimal value $\Fopt$ is known.\footnote{In practice, the
 optimal value can be obtained by solving the Lagrange dual problem if
 strong duality holds provided that the dual problem is easy to solve.} Let
$\alpha \in [0,1]$, $\tau > 0$ are some parameters, then our \emph{Shifted
 Star Bundle-level} (S-StarBL) algorithm has the update rule:
\begin{align}\label{eq:modified_Polyak_Minorant_subproblem}
 \xxTnext & = \argmin_{\xx \in \XX} \, \norm{\xx - \xxT}^2, \quad \tag{S-StarBL}                                   \\
 \text{s.t. } \;\;
          & \ell_{F_1}(\xx, \xxT) \leq \Fopt + {\highlight{ \rb{1-\alpha} (F_1(\xxT) - \Fopt) +  \tau }}  , \notag \\
          & \ell_{F_2}(\xx, \xxT) \leq {\highlight{ (1-\alpha) F_2(\xxT) + \tau }} \notag .
\end{align}
This algorithm shifts the feasible set of Bundle-level subproblem allowing us to search for $\xxTnext$ in the larger set, see
\Cref{fig:polyak_minorant_linearized_constraint,fig:polyak_minorant_linearized_objective_shifted} for illustrations of the update rule.

Next, we consider the setting when $\Fopt$ is unknown, which is arguably
much more challenging. In this setting, we design a double-loop procedure,
which dynamically searches for the optimal value $\Fopt$ using the exact
penalty, $F_1(\xxT) + \lambda \plus{F_2(\xxT)},$ as a convergence
criterion. The method is described in
\Cref{algo:PolyakMinorant_LB,algo:PolyakMinorant_LB_outer_line_search}. The
outer loop \Cref{algo:PolyakMinorant_LB_outer_line_search} repeatedly calls
the Shifted Bundle-level \Cref{algo:PolyakMinorant_LB} and updates the
current lower bound estimate $\eta_{k}$ of the optimal value $\Fopt.$

\begin{remark}\label{rem:lower_bound_Fopt}
 In \Cref{algo:PolyakMinorant_LB_outer_line_search}, we assume that the input lower bound value satisfies $\eta_0 \leq \Fopt$.
 This is not limiting as we can use the following simple initialization protocol. We first run the (projected) gradient descent
 on the unconstrained problem, $\min_{\xx \in \XX} F_1(\xx)$, for $N_{\text{init}} = \OOTilde{\frac{ (\rho + L) \DDUsq}{\mu_c^2 \varepsilon}}$
 iterations to find a point $\zz^{(N_{\text{init}})}$ with $F_1(\zz^{(N_{\text{init}})}) - \varepsilon \leq \min_{\xx \in \XX} F_1(\xx) \leq \Fopt $.
 If we have $F_2(\zz^{(N_{\text{init}})}) \leq \varepsilon$, we return the point $\zz^{(N_{\text{init}})}$ as the solution,
 otherwise we initialize $\eta_0 = F_1(\zz^{(N_{\text{init}})}) - \varepsilon$, which is a valid lower bound for $\Fopt.$
\end{remark}

\begin{remark}
 We assume that in \Cref{algo:PolyakMinorant_LB_outer_line_search} the input estimate of the Lagrange multiplier $\lambda$ is lower
 bounded by the optimal multiplier $\lambda^*.$ Such upper bound of $\lambda^*$ can be obtained if
 Slater's gap is known, e.g. using $\lambda^* \leq (F_1(y) - \Fopt) / \theta$ \cite[Lem. 3.1.21]{nesterovLecturesConvexOptimization2018},
 where $y$ is a $\theta$--Slater point. However, oftentimes, although Slater's condition fails, the strong duality can be verified
 with an estimate of $\lambda^*$, see, e.g., the controller synthesis example with equality constraints in \Cref{subsec:motivating_applications}.
\end{remark}

\begin{algorithm}
 \caption{$\text{S-BL}(\xxZero, \eta, T, \tau, \alpha, \beta, \lambda)$ \\
  Shifted Bundle-level for \eqref{eq:MainProblem}}
 \begin{algorithmic}[1]
  \State \textbf{Input:} Initial point $\xxZero \in \mathcal{X}$, lower bound of the optimal value $\eta \leq \Fopt$ of \eqref{eq:MainProblem}, number of iterations $T,$ maximum minorant violation budget $\tau$, contraction factors $\alpha$ and $\beta$,
  estimate of Lagrange multiplier $\lambda$
  \For {iteration $t = 0, 1, \dots, T-1$}
  \State Solve
  \begin{align}\label{eq:modified_Polyak_Minorant_approx_subproblem}
   \xxTnext & = \argmin_{\xx \in \XX} \, \norm{\xx - \xxT}^2,
   \tag{SBL-QP}                                                                                                                                                    \\
   \text{s.t. } \;\;
            & \ell_{F_1}(\xx, \xxT) \leq \rb{1-\alpha\, \beta} F_1(\xxT) + \alpha \, \beta \, \eta + (1-\beta) \alpha  \lambda \plus{F_2(\xxT)}   +  \tau , \notag \\
            & \ell_{F_2}(\xx, \xxT) \leq (1-\alpha) F_2(\xxT) + \tau \notag
  \end{align}
  \EndFor
  \State \textbf{Return:} $\xx^{(t^*)}$, where $ t^* = \argmin_{t \leq T} F_1(\xxT) + \lambda \plus{F_2(\xxT)}$
 \end{algorithmic}
 \label{algo:PolyakMinorant_LB}
\end{algorithm}

\begin{algorithm}[H]
 \caption{$\text{Ada-LS}(\xxZero, \eta_0, N, \tau, \alpha, \beta, \lambda)$\\
  Adaptive Line Search for Shifted Bundle-level (S-BL) Method}
 \begin{algorithmic}[1]
  \State \textbf{Input:} Initial point $\xxZero \in \mathcal{X}$, lower bound of the optimal value $\eta_0 \leq \Fopt$ of \eqref{eq:MainProblem}, number of epochs $T,$ maximum minorant violation budget $\tau$, contraction factors $\alpha$ and $\beta$, estimate of Lagrange multiplier $\lambda$
  \For {iteration $k = 1, \dots, N$}
  \State
  \begin{eqnarray*}
   \xxBarK &=& \text{S-BL}(\xxZero, \eta_k, T, \tau, \alpha, \beta, \lambda) \\
   \eta_{k+1} &=& (1-\beta) \rb{\eta_k + F_1(\xxBarK) + \lambda \plus{F_2(\xxBarK) } }  - (1-2\beta) \eta_k
  \end{eqnarray*}
  \EndFor
  \State \textbf{Return:} $\bar{\xx}^{(k^*)}$, where $ k^* = \argmin_{k \leq N} F_1(\xxBarK) + \lambda \plus{F_2(\xxBarK) } $
 \end{algorithmic}
 \label{algo:PolyakMinorant_LB_outer_line_search}
\end{algorithm}

The following lemma is central to establish convergence of both above
proposed algorithms. Since \eqref{eq:modified_Polyak_Minorant_subproblem}
is a special case of \Cref{algo:PolyakMinorant_LB} with $\eta = \Fopt$,
$\beta = 1$ and arbitrary $\lambda \in \RR$, the following unified lemma
applies to both schemes.

\begin{lemma}\label{lemma:polyak_minorant_approx_feasibility}
 Let \Assref{ass:AssumptionExactPPM} hold and let $\xxOpt \in \XX$ be an optimal solution to the hidden convex problem \eqref{eq:MainProblem}. For any $t \geq 1$ and $\alpha \in [0,1],\, $ define the point $\xxT_{\alpha} \define c^{-1}( (1-\alpha) c(\xxT) + \alpha c(\xxOpt) ).$ Then we can distinguish between two cases:
 $$
  \texttt{Good case:}\qquad F_1^* - (1-\beta) \rb{\eta_k + F_1(\xxT) + \lambda \plus{F_2(\xxT)} } + (1-2\beta) \eta_k  \leq 0,   $$
 for all $t \in \setT$.
 By setting $\tau \geq \frac{\rho \alpha^2 \DDUsq}{2\mu_c^2}, $ the point
 $\xxK_{\alpha}$ is feasible for subproblem \eqref{eq:modified_Polyak_Minorant_approx_subproblem}.
 Moreover, for any $t=0, \ldots, T-1$:
 \begin{equation*}
  \norm{\xxTnext - \xxT} \leq \norm{\xxT_{\alpha} - \xxT} \leq \frac{\alpha \DDU}{\mu_c}.
 \end{equation*}
 Alternatively, there exists $\bar t \in \setT$ such that
 $$
  \texttt{Bad case: } \qquad F_1^* - (1-\beta) \rb{\eta_k + F_1(\xx^{(\bar{t})}) + \lambda \plus{F_2(\xx^{(\bar t)})} } + (1-2\beta) \eta_k  > 0 .
 $$
 Then we have $\eta_{k+1} < \Fopt$. If, additionally, strong duality holds with a Lagrange multiplier $\lambda^*$, then
 $$
  \Fopt - \eta_{k+1} \leq \beta (\Fopt - \eta_{k}) + (1-\beta) (\lambda^* - \lambda) \plus{F_2(\xxBarK)}  .
 $$
\end{lemma}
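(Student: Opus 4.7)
The plan is to treat the good and bad cases separately. In the good case the claim is a feasibility and distance bound for the candidate point $\xxT_\alpha \define c^{-1}((1-\alpha)c(\xxT)+\alpha c(\xxOpt))$ inside subproblem \eqref{eq:modified_Polyak_Minorant_approx_subproblem}; in the bad case it follows from the closed form of the $\eta_{k+1}$ update, combined with the saddle-point inequality once strong duality is invoked.

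For the good case, I would start from the standard $\rho$-weakly convex bound
\begin{equation*}
\ell_{F_i}(\xxT_\alpha,\xxT) \;\leq\; F_i(\xxT_\alpha) + \tfrac{\rho}{2}\norm{\xxT_\alpha-\xxT}^2, \qquad i=1,2,
\end{equation*}
and then invoke \eqref{eq:HCContractionInequalityObjective} to bound $F_i(\xxT_\alpha)$ by a convex combination of $F_i(\xxT)$ and $F_i(\xxOpt)$, and \eqref{eq:HCContractionInequalityNorm} to control the quadratic remainder by $\tfrac{\rho\alpha^2\DDUsq}{2\mu_c^2}\leq \tau$. For the constraint minorant this immediately gives $\ell_{F_2}(\xxT_\alpha,\xxT)\leq (1-\alpha)F_2(\xxT)+\tau$, since $F_2(\xxOpt)\leq 0$. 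For the objective minorant the same chain of estimates yields $\ell_{F_1}(\xxT_\alpha,\xxT)\leq (1-\alpha)F_1(\xxT)+\alpha \Fopt +\tau$, and comparing with the right-hand side of the first constraint in \eqref{eq:modified_Polyak_Minorant_approx_subproblem}, after cancelling $\tau$ and dividing by $\alpha$, reduces to $\Fopt - \beta\eta_k \leq (1-\beta)(F_1(\xxT)+\lambda\plus{F_2(\xxT)})$, which is exactly the good-case hypothesis once the $(1-2\beta)\eta_k$ term is combined with $-(1-\beta)\eta_k$. The distance bound $\norm{\xxTnext-\xxT}\leq\norm{\xxT_\alpha-\xxT}\leq \alpha\DDU/\mu_c$ follows because $\xxTnext$ is the Euclidean projection of $\xxT$ onto the feasible set of \eqref{eq:modified_Polyak_Minorant_approx_subproblem}, which contains $\xxT_\alpha$, and \eqref{eq:HCContractionInequalityNorm} controls $\norm{\xxT_\alpha - \xxT}$ using the bounded diameter $\DDU$ of $\UU$.

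For the bad case, I would first expand the definition of $\eta_{k+1}$ in \Cref{algo:PolyakMinorant_LB_outer_line_search} into the closed form $\eta_{k+1} = \beta\eta_k + (1-\beta)(F_1(\xxBarK)+\lambda\plus{F_2(\xxBarK)})$. Because $\xxBarK$ minimizes $F_1(\xxT)+\lambda\plus{F_2(\xxT)}$ over $t\in\setT$, substituting $\xx^{(\bar t)}$ into the bad-case hypothesis (which rearranges to $\Fopt > \beta\eta_k+(1-\beta)(F_1(\xx^{(\bar t)})+\lambda\plus{F_2(\xx^{(\bar t)})})$) yields $\Fopt > \eta_{k+1}$. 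Under strong duality with multiplier $\lambda^*\geq 0$, the saddle-point inequality gives $\Fopt \leq F_1(\xx) + \lambda^* F_2(\xx) \leq F_1(\xx) + \lambda^* \plus{F_2(\xx)}$ for every $\xx\in\XX$. Rewriting
\begin{equation*}
\Fopt - \eta_{k+1} \;=\; \beta(\Fopt-\eta_k) \;+\; (1-\beta)\bigl(\Fopt - F_1(\xxBarK) - \lambda\plus{F_2(\xxBarK)}\bigr),
\end{equation*}
and applying the saddle-point bound inside the last parenthesis to produce $(\lambda^*-\lambda)\plus{F_2(\xxBarK)}$, delivers the claimed contraction.

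The step I expect to demand the most care is the algebraic matching in the good case: the coefficients of $F_1(\xxT)$, $\eta_k$, and $\plus{F_2(\xxT)}$ in the first linearized constraint of \eqref{eq:modified_Polyak_Minorant_approx_subproblem} are tuned precisely so that the feasibility of $\xxT_\alpha$ collapses to the good-case inequality, and bookkeeping the interplay between the factors $(1-\beta)$ and $(1-2\beta)$ in the hypothesis versus the $\eta$-update is where it is easiest to slip a sign.
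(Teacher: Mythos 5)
Your proposal is correct and follows essentially the same route as the paper's proof: weak convexity plus \eqref{eq:HCContractionInequalityObjective}--\eqref{eq:HCContractionInequalityNorm} to show $\xxT_\alpha$ is feasible for \eqref{eq:modified_Polyak_Minorant_approx_subproblem} (with the good-case hypothesis closing the objective-minorant inequality), the projection/optimality argument for the distance bound, and the closed form of the $\eta_{k+1}$ update together with the saddle-point inequality $F_1(\xx)\geq \Fopt-\lambda^*\plus{F_2(\xx)}$ for the bad case. The only cosmetic difference is that you divide by $\alpha$ to isolate the good-case inequality rather than substituting it directly into the chain of estimates as the paper does; the algebra is identical.
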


\begin{proof}
 Assume we are in the ``\texttt{Good case}'', then using weak convexity and hidden convexity of $F_1$, we have
 \begin{eqnarray*}
  \ell_{F_1}(\xxT_{\alpha}, \xxT)
  &=& F_1(\xxT) + \langle\nabla F_1(\xxT), \xxT_{\alpha} - \xxT \rangle  \\
  &\leq & \frac{\rho}{2}\norm{\xxT_{\alpha} - \xxT}^2 + F_1(\xxT_{\alpha}) \\
  &\leq & \frac{\rho}{2}\norm{\xxT_{\alpha} - \xxT}^2 + (1-\alpha ) F_1(\xxT) + \alpha \Fopt \\
  &\leq & \frac{\rho \alpha^2 \DDUsq}{2\mu_c^2} + (1-\alpha) F_1(\xxT) + \alpha \Fopt \\
  &\leq& \tau + (1-\alpha) F_1(\xxT)  + (1-\beta) \alpha \rb{\eta_k + F_1(\xxT) + \lambda \plus{F_2(\xxT)} } \\
  && \qquad - (1-2\beta) \alpha  \eta_k \\
  & = & \tau + (1-\alpha \, \beta) F_1(\xxT) + \alpha \beta  \eta_k + (1-\beta) \alpha  \lambda \plus{F_2(\xxT)} ,
 \end{eqnarray*}
 where the second and third inequalities follow from \Cref{lemma:HCContractionInequality}, and the last two are due to our choice of $\tau,$ and the fact that we are in the ``\texttt{Good case}''.

 Similarly for $F_2$, we have
 \begin{eqnarray*}
  \ell_{F_2}(\xxT_{\alpha}, \xxT)
  &=& F_2(\xxT) + \langle\nabla F_2(\xxT), \xxT_{\alpha} - \xxT \rangle  \\
  &\leq & \frac{\rho}{2}\norm{\xxT_{\alpha} - \xxT}^2 + F_2(\xxT_{\alpha}) \\
  &\leq & \frac{\rho}{2}\norm{\xxT_{\alpha} - \xxT}^2 + (1-\alpha ) F_2(\xxT) + \alpha F_2(\xxOpt) \\
  & \leq & \tau + (1-\alpha ) F_2(\xxT) ,
 \end{eqnarray*}
 where in the last step we used the assumption on $\tau$ and the feasibility of $\xxOpt.$ The above two inequalities imply that $\xxT_{\alpha}$ is feasible for sub-problem \eqref{eq:modified_Polyak_Minorant_approx_subproblem}. Using the optimality of $\xxTnext$, and above established feasibility of $\xxT_{\alpha}$, we obtain
 $$
  \norm{\xxTnext - \xxT} \leq \norm{\xxT_{\alpha} - \xxT} \leq \frac{ \alpha \DDU}{\mu_c} .
 $$

 Now assume we are in the ``\texttt{Bad case}'', then by construction of
 $\eta_{t+1}$ we have
 \begin{eqnarray*}
  \eta_{k+1} &=& (1-\beta) \rb{\eta_k + F_1(\xxBarK) + \lambda \plus{F_2(\xxBarK)}  }  - (1-2\beta) \eta_k \\
  &\leq& (1-\beta) \rb{\eta_k + F_1(\xx^{(\bar t)}) +   \lambda \plus{F_2(\xx^{(\bar t)})}  }  - (1-2\beta) \eta_k < \Fopt ,
 \end{eqnarray*}
 where the inequality holds since $\xxBarK$ satisfies $F_1(\xxBarK) + \lambda \plus{F_2(\xxBarK)} \leq F_1(\xx^{(\bar t)}) + \lambda \plus{F_2(\xx^{(\bar t)})}$ due to the output criterion of \Cref{algo:PolyakMinorant_LB}. By strong duality, we have for any $x \in \XX$, that $F_1(x) \geq \Fopt - \lambda^* \plus{F_2(x)}.$ Therefore,
 \begin{eqnarray*}
  \Fopt - \eta_{k+1} &=& \Fopt - (1-\beta) \rb{\eta_k + F_1(\xxBarK) + \lambda \plus{F_2(\xxBarK) }} + (1-2\beta) \eta_k \\
  &\leq& \Fopt - (1-\beta) \rb{\eta_k + \Fopt + (\lambda - \lambda^*) \plus{F_2(\xxBarK)} }  + (1-2\beta) \eta_k \\
  &=& \beta (\Fopt - \eta_k) + (1-\beta) (\lambda^* - \lambda) \plus{F_2(\xxBarK)}
 \end{eqnarray*}
 concluding the proof.
\end{proof}

Now we are ready to formulate the main results of this section, i.e. the
convergence of
\Cref{algo:PolyakMinorant_LB,algo:PolyakMinorant_LB_outer_line_search}. We
distinguish between two most interesting cases: when we know the optimal
value $\Fopt$ and when we do not know it.

\subsection{Convergence with Known $\Fopt$}
\label{subsec:PMMknownFstarNoCQ}
In this case we can set
$\eta = \Fopt$, $\beta
 = 1$, arbitrary
$\lambda \in \RR$, and
we do not need the
outer loop
``line-search''
procedure
\Cref{algo:PolyakMinorant_LB_outer_line_search}, since our algorithm simplifies to \eqref{eq:modified_Polyak_Minorant_subproblem}.
Under this choice of
parameters, we always
fall in the
``\texttt{Good case}''
of
\Cref{lemma:polyak_minorant_approx_feasibility},
and we have the
following convergence
result.

\begin{theorem}[S-StarBL, Known $\Fopt$, No CQ]
 \label{thm:MainResultHCPolyakMinorant}
 Assume that \eqref{eq:MainProblem} is hidden convex, \Assref{ass:AssumptionExactPPM} holds and that
 $F_1, F_2$ are $L$-smooth. Set $\tau = \frac{\rho \alpha^2 \DDUsq}{2\mu_c^2},$ $\alpha = \frac{\varepsilon \mu_c^2}{(\rho + L) \DDUsq},$ then
 the last iterate of \eqref{eq:modified_Polyak_Minorant_subproblem} satisfies
 $$
  F_1(\xx^{(T)}) - \Fopt \leq \epsilon,
  \, F_2(\xx^{(T)}) \leq \epsilon
 $$
 after
 \begin{align*}
  \Ttotal \geq \frac{(\rho+L) \DDUsq}{\mu_c^2 \, \epsilon}
  \cdot \log\left(\frac{2 v(\xxZero, \Fopt) }{\epsilon}\right) .
 \end{align*}
\end{theorem}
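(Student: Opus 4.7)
The plan is to combine the ``\texttt{Good case}'' of \Cref{lemma:polyak_minorant_approx_feasibility} with the $L$-smoothness descent lemma to obtain a contraction on the scalarized optimality measure $v(\xx, \Fopt) = \max\{F_1(\xx) - \Fopt,\, F_2(\xx)\}$. Observe first that \eqref{eq:modified_Polyak_Minorant_subproblem} coincides with \Cref{algo:PolyakMinorant_LB} run with $\eta_k \equiv \Fopt$ and $\beta = 1$; substituting these into the trigger condition of \Cref{lemma:polyak_minorant_approx_feasibility} collapses it to $\Fopt \leq \eta_k$, which holds with equality at every iteration. Hence the algorithm is always in the Good case, so $\xxT_\alpha \define c^{-1}((1-\alpha)c(\xxT) + \alpha c(\xxOpt))$ is feasible for the subproblem, and the step is controlled as $\norm{\xxTnext - \xxT} \leq \norm{\xxT_\alpha - \xxT} \leq \frac{\alpha \DDU}{\mu_c}$.

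Next, I would apply the $L$-smoothness bound $F_i(\xxTnext) \leq \ell_{F_i}(\xxTnext, \xxT) + \frac{L}{2}\norm{\xxTnext - \xxT}^2$ for $i=1,2$ and plug in the two linear-minorant constraints satisfied by $\xxTnext$:
\begin{align*}
 F_1(\xxTnext) - \Fopt &\leq (1-\alpha)\bigl(F_1(\xxT) - \Fopt\bigr) + \tau + \tfrac{L \alpha^2 \DDUsq}{2 \mu_c^2}, \\
 F_2(\xxTnext) &\leq (1-\alpha) F_2(\xxT) + \tau + \tfrac{L \alpha^2 \DDUsq}{2 \mu_c^2}.
\end{align*}
Taking the maximum of the two bounds yields the single-step recursion $v(\xxTnext, \Fopt) \leq (1-\alpha)\, v(\xxT, \Fopt) + \tau + \tfrac{L \alpha^2 \DDUsq}{2 \mu_c^2}$, and substituting the chosen $\tau = \tfrac{\rho \alpha^2 \DDUsq}{2 \mu_c^2}$ collapses the additive error to $\tfrac{(\rho+L)\alpha^2 \DDUsq}{2 \mu_c^2}$.

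Finally, unrolling this geometric recursion and bounding the partial sum by $1/\alpha$ gives $v(\xxT, \Fopt) \leq (1-\alpha)^T\, v(\xxZero, \Fopt) + \tfrac{(\rho+L)\alpha \DDUsq}{2 \mu_c^2}$. Plugging in $\alpha = \tfrac{\epsilon \mu_c^2}{(\rho+L) \DDUsq}$ makes the additive residual exactly $\epsilon/2$, while forcing $(1-\alpha)^T v(\xxZero, \Fopt) \leq \epsilon/2$ yields $T \geq \tfrac{1}{\alpha}\log\bigl(2 v(\xxZero, \Fopt)/\epsilon\bigr) = \tfrac{(\rho+L)\DDUsq}{\mu_c^2 \epsilon}\log\bigl(2 v(\xxZero, \Fopt)/\epsilon\bigr)$, matching the claimed complexity; the conclusion $F_1(\xxT) - \Fopt \leq \epsilon$ and $F_2(\xxT) \leq \epsilon$ follows from the definition of $v$. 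The only subtle point is ensuring the Good case holds at every iteration, which is immediate here because its trigger depends only on $\eta = \Fopt$ and $\beta = 1$, not on the iterates; all remaining work is routine given \Cref{lemma:polyak_minorant_approx_feasibility} and \Cref{lemma:HCContractionInequality}.
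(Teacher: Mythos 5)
Your proposal is correct and follows essentially the same route as the paper's own proof: establish that the choice $\eta=\Fopt$, $\beta=1$ puts every iteration in the ``\texttt{Good case}'' of \Cref{lemma:polyak_minorant_approx_feasibility} (hence $\norm{\xxTnext - \xxT} \leq \alpha\DDU/\mu_c$), combine $L$-smoothness with the two shifted linear constraints to get the contraction on $v(\innerEmpty{},\Fopt)$, and unroll with the stated $\alpha$ and $\tau$. No gaps.
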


\begin{proof}
 The choice of $\eta$, $\beta$ and $\lambda$ implies that for any $t \geq 0$
 $$
  \rb{1-\alpha\, \beta} F_1(\xxT) + \alpha \, \beta \, \eta + (1-\beta) \alpha  \lambda \plus{F_2(\xxT)}   +  \tau = \rb{1-\alpha} F_1(\xxT) + \alpha \, \Fopt  +  \tau.
 $$
 Therefore, by smoothness of $F_1$ we obtain
 \begin{eqnarray*}
  F_1(\xxTnext) &\leq& F_1(\xxT) + \langle \nabla F_1(\xxT), \xxTnext - \xxT\rangle + \frac{L}{2} \norm{\xxTnext - \xxT}^2 \\
  &=& \ell_{F_1}(\xxTnext, \xxT) + \frac{L}{2} \norm{\xxTnext - \xxT}^2 \\
  &\leq&  \Fopt + (1-\alpha) (F_1(\xxT) - \Fopt) + \tau +  \frac{L}{2} \norm{\xxTnext - \xxT}^2 ,
 \end{eqnarray*}
 where the last inequality follows from the update rule of \Cref{algo:PolyakMinorant_LB} due to feasibility of $\xxTnext$ for \eqref{eq:modified_Polyak_Minorant_approx_subproblem}. Subtracting $\Fopt$ from both sides and noticing that the choice of $\eta,$ $\beta$ and $\lambda$ imply that we are in the ``\texttt{Good case}'' of \Cref{lemma:polyak_minorant_approx_feasibility}, we obtain for any $\tau \geq \frac{\rho \alpha^2 \DDUsq}{2\mu_c^2}$
 the recursion
 \begin{eqnarray*}
  F_1(\xxTnext) - \Fopt
  &\leq&  (1-\alpha) (F_1(\xxT) - \Fopt) + \tau +  \frac{L \alpha^2 \DDUsq}{2\mu_c^2} .
 \end{eqnarray*}
 An analogous derivation for $F_2$ results in
 \begin{eqnarray*}
  F_2(\xxTnext)
  &\leq&  (1-\alpha) F_2(\xxT) + \tau +  \frac{L \alpha^2 \DDUsq}{2\mu_c^2} .
 \end{eqnarray*}
 Combining the above two inequalities, we can establish a recursion for the value function
 \begin{eqnarray*}
  v(\xxTnext, \Fopt) & = & \max\left\{ F_1(\xxTnext) - \Fopt, F_2(\xxTnext) \right\} \\
  &\leq& (1-\alpha)  \max\left\{ F_1(\xxT) - \Fopt, F_2(\xxT) \right\} + \tau +  \frac{L \alpha^2 \DDUsq}{2\mu_c^2} \\
  &=& (1-\alpha) v(\xxT, \Fopt) + \frac{(\rho + L) \alpha^2 \DDUsq}{2\mu_c^2} ,
 \end{eqnarray*}
 where in the last step we used the definition of the value function and set $\tau = \frac{\rho \alpha^2 \DDUsq}{2\mu_c^2}.$
 Then by unrolling the recursion for $t = 0, \ldots, T-1$, we have for any
 $\alpha \in [0,1]$
 \begin{align*}
  v(\xx^{(T)}, \Fopt) \leq (1-\alpha)^T v(\xxZero, \Fopt) + \frac{(\rho + L) \alpha \DDUsq}{2\mu_c^2} .
 \end{align*}
 Setting $\alpha = \frac{\varepsilon \mu_c^2}{(\rho + L) \DDUsq},$ we obtain the desired result.
\end{proof}

\subsection{Convergence with Unknown $F_1^*$ under Strong Duality}
\label{subsec:PMMunknownFstarStrongDuality}
In this subsection,
we will deal with
the situation when
the exact value of
$\Fopt$ is unknown.
Assume for
simplicity that we
have a valid lower
bound for the
optimal value
$\Fopt$ to
initialize our
\Cref{algo:PolyakMinorant_LB_outer_line_search},
i.e., $\eta_0 \leq
 \Fopt$. Such lower
bound can be straightforwardly
obtained as we
explain in
\Cref{rem:lower_bound_Fopt}.
We also assume that
the initial point
$\xxZero$ is nearly
feasible, i.e.,
$\plus{F_2(\xxZero)}
 \leq \varepsilon /
 (2\lambda)$. Such
point can be easily
found by solving
$\min_{\xx \in \XX}
 F_2(\xx)$ to this
accuracy in
$\OOTilde{\frac{\lambda
   (\rho + L)
   \DDUsq}{\mu_c^2
   \varepsilon}}$
iterations of
projected gradient
descent applied to
$F_2(\innerEmpty{})$.
Now we are ready to
prove the main
convergence result
of
\Cref{algo:PolyakMinorant_LB_outer_line_search}
which calls
\Cref{algo:PolyakMinorant_LB}
at each iteration.
\begin{theorem}[S-BL+AdaLS, Unknown $\Fopt$, Strong Duality]
 \label{thm:MainResultHCPolyakMinorantUnknownOptimalValue}
 Assume that \eqref{eq:MainProblem} is hidden convex, \Assref{ass:AssumptionExactPPM} holds, and that
 $F_1, F_2$ are $L$-smooth. Let the strong duality hold for \eqref{eq:MainProblem} and the optimal Lagrange muliplier is at most $\lambda^*$. Set $\eta_0 \leq \Fopt$, $\beta = \nicefrac{1}{2}$, $\lambda \geq \lambda^*$, and $\plus{F_2(\xxZero)} \leq \nfr{\varepsilon}{(2\lambda)}$. Then \Cref{algo:PolyakMinorant_LB_outer_line_search} has the oracle complexity
 $$
  T \cdot N =  16 (1+\lambda)  \frac{(\rho + L) \DDUsq }{\mu_c^2\epsilon} \log^2\rb{\frac{8(F_1(\xxZero) - \Fopt)}{\varepsilon}} \cdot
  \log\rb{\frac{\Fopt - \eta_0}{\varepsilon}},
 $$
 to find a point $\bar \xx^{(k^*)} \in \XX$ with
 $
  F_1(\bar{\xx}^{(k^*)}) - \Fopt + \lambda \plus{F_2(\bar{\xx}^{(k^*)})} \leq \varepsilon.
 $
\end{theorem}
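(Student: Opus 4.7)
The plan is to combine the Good/Bad dichotomy established in \Cref{lemma:polyak_minorant_approx_feasibility} with a contraction on the potential $\Phi_t \define F_1(\xxT) - \Fopt + \lambda \plus{F_2(\xxT)}$ inside each inner epoch, and then chain outer epochs through a geometric contraction of the lower-bound gap $\Fopt - \eta_k$.

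For the \emph{inner loop}, I would fix an outer index $k$ and suppose the Good-case condition of \Cref{lemma:polyak_minorant_approx_feasibility} holds for every $t\in\setT$; with $\beta=1/2$ this reads $\Phi_t \geq \Fopt - \eta_k$. The lemma then guarantees feasibility of $\xxT_{\alpha}$ in \eqref{eq:modified_Polyak_Minorant_approx_subproblem} when $\tau = \rho\alpha^2\DDUsq/(2\mu_c^2)$, and in particular $\norm{\xxTnext - \xxT}^2 \leq \alpha^2\DDUsq/\mu_c^2$. Applying $L$-smoothness of $F_1$ together with the first constraint of \eqref{eq:modified_Polyak_Minorant_approx_subproblem} and dropping the nonpositive term $\alpha\beta(\eta_k - \Fopt)$ would give
\begin{align*}
F_1(\xxTnext) - \Fopt \leq (1-\alpha\beta)(F_1(\xxT) - \Fopt) + (1-\beta)\alpha\lambda\plus{F_2(\xxT)} + \tau + \tfrac{L\alpha^2\DDUsq}{2\mu_c^2}.
\end{align*}
A parallel calculation for $F_2$, pushed through $\plus{\cdot}$ by monotonicity and nonnegativity of the residual shifts, yields $\lambda\plus{F_2(\xxTnext)} \leq (1-\alpha)\lambda\plus{F_2(\xxT)} + \lambda\rb{\tau + L\alpha^2\DDUsq/(2\mu_c^2)}$. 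Summing, the coefficient on $\lambda\plus{F_2(\xxT)}$ collapses as $(1-\beta)\alpha + (1-\alpha) = 1 - \alpha\beta$ and matches the coefficient on $F_1(\xxT) - \Fopt$, producing the uniform contraction
\begin{align*}
\Phi_{t+1} \leq (1-\alpha/2)\Phi_t + (1+\lambda)\tfrac{(\rho+L)\alpha^2\DDUsq}{2\mu_c^2}.
\end{align*}
Unrolling with $\alpha$ of order $\varepsilon\mu_c^2/((1+\lambda)(\rho+L)\DDUsq)$ and $T = \OOTilde{1/\alpha}$ forces $\Phi_T \leq \varepsilon$; the assumption $\plus{F_2(\xxZero)} \leq \varepsilon/(2\lambda)$ makes $\Phi_0 \leq F_1(\xxZero) - \Fopt + \varepsilon/2$, which enters only inside the logarithm. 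Since $\xxBarK$ is the $\argmin$ of $F_1 + \lambda\plus{F_2}$ over the epoch, one obtains $\Phi(\xxBarK) \leq \Phi_T \leq \varepsilon$.

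For the \emph{outer loop}, at each iteration $k$ exactly one of the two cases holds. In Good case, the inner analysis immediately yields $\Phi(\xxBarK) \leq \varepsilon$ and the algorithm has succeeded. In Bad case, \Cref{lemma:polyak_minorant_approx_feasibility} delivers some $\bar t$ with $\Phi_{\bar t} < \Fopt - \eta_k$, hence $\Phi(\xxBarK) < \Fopt - \eta_k$; simultaneously, strong duality combined with $\lambda \geq \lambda^*$ renders the $(\lambda^* - \lambda)\plus{F_2(\xxBarK)}$ term in the lemma nonpositive, so $\Fopt - \eta_{k+1} \leq \tfrac{1}{2}(\Fopt - \eta_k)$ and the lower-bound invariant $\eta_{k+1} \leq \Fopt$ is preserved. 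Choosing $N$ of order $\log((\Fopt - \eta_0)/\varepsilon)$ forces either an intermediate Good case (done) or a final $\Fopt - \eta_N \leq \varepsilon$, which through the Bad-case bound gives $\Phi(\xxBarN) < \varepsilon$. In either case the $\argmin$ selection picks $\bar{\xx}^{(k^*)}$ with $\Phi(\bar{\xx}^{(k^*)}) \leq \varepsilon$, and the product $T\cdot N$ matches the announced oracle complexity.

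The main obstacle I anticipate is the algebraic alignment that turns the sum of the $F_1$- and $F_2$-side smoothness inequalities into a single contraction on $\Phi_t$. This works only because the first constraint of \eqref{eq:modified_Polyak_Minorant_approx_subproblem} carries the $(1-\beta)\alpha\lambda\plus{F_2(\xxT)}$ correction, which exactly cancels the residue from the $F_2$-recursion and produces the uniform factor $(1-\alpha\beta)$. The choice $\beta=1/2$ is doubly essential: it symmetrizes the Good-case condition into the clean form $\Phi_t \geq \Fopt - \eta_k$ (so that any Bad case automatically produces a point beating the current lower-bound gap), and it forces the Bad-case geometric factor to be exactly $1/2$. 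Secondary care will be needed to push the $F_2$ inequality through $\plus{\cdot}$ and to verify that the initial-feasibility assumption on $\xxZero$ is sharp enough to keep $\Phi_0$ inside the logarithm without inflating the overall complexity.
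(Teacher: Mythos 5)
Your proposal is correct and shares the paper's overall skeleton -- the Good/Bad dichotomy of \Cref{lemma:polyak_minorant_approx_feasibility}, the geometric halving of $\Fopt - \eta_k$ under $\lambda \geq \lambda^*$ in the Bad case, and the argument that a Good-case epoch (or a Bad case once $\Fopt - \eta_k \leq \varepsilon$) must occur within $N = \OO{\log((\Fopt-\eta_0)/\varepsilon)}$ outer iterations -- but your inner-epoch analysis is genuinely different. The paper keeps the two recursions for $F_1(\xxT)-\Fopt$ and $\plus{F_2(\xxT)}$ separate: it sums the feasibility recursion to bound $\alpha\sum_t \plus{F_2(\xxT)}$, then unrolls the optimality recursion against that average, which produces a residual term $\tfrac{\lambda A \alpha^2}{2}T$ and forces the choice $\alpha \lesssim \tfrac{\varepsilon}{\lambda A}\cdot\tfrac{1}{\log(\cdot)}$, hence the $\log^2$ in the stated $T$. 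You instead observe that at $\beta = \nicefrac{1}{2}$ the coefficients align exactly, $(1-\beta)\alpha + (1-\alpha) = 1-\alpha\beta$, so the weighted sum of the two smoothness inequalities collapses into a single contraction $\Phi_{t+1} \leq (1-\alpha/2)\Phi_t + (1+\lambda)\tfrac{(\rho+L)\alpha^2\DDUsq}{2\mu_c^2}$ on the exact-penalty gap $\Phi_t = F_1(\xxT)-\Fopt+\lambda\plus{F_2(\xxT)}$; this is cleaner, removes the need for the summed feasibility bound, and in fact saves one logarithmic factor relative to the paper's choice of $\alpha$ and $T$ (your $T$ is $\OO{(1+\lambda)A\varepsilon^{-1}\log(\cdot)}$ rather than $\log^2$), so it delivers the theorem's complexity a fortiori. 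Your verification of the intermediate claims (dropping $\alpha\beta(\eta_k-\Fopt)\leq 0$, pushing the $F_2$ recursion through $\plus{\cdot}$, the invariant $\eta_{k+1}\leq\Fopt$ across Bad cases, and $\Phi(\xxBarK)\leq\Phi_T$ via the argmin output rule) all check out; the only informality is the standard off-by-one in whether the epoch run with the final tightened $\eta_N$ is counted, which is present in the paper's proof as well.
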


\begin{proof}
 Assume, for the sake of contradiction, that for the first $k \leq N$ iterations $F_1(\xxBarK) - \Fopt + \lambda \plus{F_2(\xxBarK)} > \varepsilon$. Our proof strategy is to show that we always fall into the ``\texttt{Good case}'' of \Cref{lemma:polyak_minorant_approx_feasibility} at least once before $k \leq N.$

 First, if for some $k \leq N$, the lower bound $\eta_k$ is a good
 approximation of the optimal value $\eta_k$, i.e., $\Fopt - \eta_k \leq
  \varepsilon$, then the choice $\beta = \nicefrac{1}{2}$ implies that
 \begin{eqnarray*}
  (*) \define \Fopt &-& (1-\beta) \rb{\eta_k + F_1(\xxT) + \lambda \plus{F_2(\xxT)} } + (1-2\beta) \eta_k \\
  &=& \Fopt - \frac{1}{2} \rb{\eta_k + F_1(\xxT) + \lambda \plus{F_2(\xxT)} }  \\
  &\leq&  - \frac{1}{2} \rb{ F_1(\xxBarK) - \Fopt + \lambda \plus{F_2(\xxBarK)} - \varepsilon  }  \leq 0 ,
 \end{eqnarray*}
 for all $t$ in iteration $k$ and we automatically fall into the ``\texttt{Good case}'' of \Cref{lemma:polyak_minorant_approx_feasibility}.

 Otherwise, if $\Fopt - \eta_k > \varepsilon$, we are going to estimate the
 number of outer loop iterations $N$ (of
 \Cref{algo:PolyakMinorant_LB_outer_line_search}) to achieve $\Fopt - \eta_N
  \leq \varepsilon$. Since $\lambda \geq \lambda^*$, the ``\texttt{Bad
  case}'' of \Cref{lemma:polyak_minorant_approx_feasibility} implies $\Fopt -
  \eta_{N} \leq (\Fopt - \eta_0) / 2^N$. Thus, after at most $N \geq
  \log_2\rb{\frac{\Fopt - \eta_0}{\varepsilon}}$ iterations, we have $\Fopt -
  \eta_{N} \leq \varepsilon$, and we end up in the ``\texttt{Good case}'' of
 \Cref{lemma:polyak_minorant_approx_feasibility}.

 Therefore, there exists at least one $k = k^* \leq N$ when we fall into the
 ``\texttt{Good case}''. By \Cref{lemma:polyak_minorant_approx_feasibility},
 $\xx_\alpha^{(t)}$ is feasible for each subproblem
 \eqref{eq:modified_Polyak_Minorant_approx_subproblem}, and $\norm{\xxTnext
   - \xxT} \leq \norm{\xxT_{\alpha} - \xxT} \leq \frac{\alpha \DDU}{\mu_c}$
 for all $t = 0, \ldots, T-1$. By $L$-smoothness, following similar steps as
 in the proof of \Cref{thm:MainResultHCPolyakMinorant}, we obtain
 \begin{eqnarray*}
  F_1(\xxTnext)
  &\leq&  \rb{1-\frac{\alpha}{2}} F_1(\xxT) + \frac{\alpha \, \eta_{k^{*}}}{2} + \frac{\alpha \, \lambda}{2} \plus{F_2(\xxT)}  + \tau + \frac{ L \alpha^2 \DDUsq}{2\mu_c^2},  \\
  F_2(\xxTnext) &\leq& (1-\alpha) F_2(\xxT) + \tau + \frac{ L \alpha^2 \DDUsq}{2\mu_c^2} .
 \end{eqnarray*}
 Defining $A \define \frac{(\rho + L) \DDUsq}{2\mu_c^2}$, using $\eta_{k^*} \leq \Fopt$ (guaranteed by \Cref{lemma:polyak_minorant_approx_feasibility}) and setting $\tau = \frac{\rho \alpha^2 \DDUsq}{2\mu_c^2}$
 \begin{eqnarray}
  F_1(\xxTnext) - \Fopt
  &\leq&  \rb{1-\frac{\alpha}{2}} \rb{F_1(\xxT) - \Fopt} + \frac{\alpha \, \lambda}{2} \plus{F_2(\xxT)}  + A \alpha^2,  \label{eq:Aprox_PMM_optimality}\\
  \plus{F_2(\xxTnext)} &\leq& (1-\alpha) \plus{F_2(\xxT)} +  A \alpha^2 ,  \label{eq:Aprox_PMM_feasibility}
 \end{eqnarray}
 where the $\plus{\innerEmpty{}}$ appears by considering the cases when $F_2(\xxTnext) \geq 0$ and $F_2(\xxTnext) < 0$. Summing up \eqref{eq:Aprox_PMM_feasibility}, we have
 $$
  \alpha \sum_{t=0}^T \plus{F_2(\xxT)} \leq \plus{F_2(\xxZero)} + A \alpha^2 T.
 $$
 Unrolling \eqref{eq:Aprox_PMM_optimality} and using the above average feasibility bound, we have
 \begin{eqnarray*}
  F_1(\xx^{(T)}) - \Fopt &\leq& \rb{1-\frac{\alpha}{2}}^T \rb{F_1(\xxZero) - \Fopt} +    \frac{\lambda \plus{F_2(\xxZero)}}{2} + \frac{\lambda A \alpha^2}{2} T + 2 A \alpha .
 \end{eqnarray*}
 Now it remains to set
 $$
  \alpha = \min\cb{\frac{\varepsilon}{16 A}, \frac{\varepsilon}{8 \lambda A} \frac{1}{\log\rb{\nicefrac{8(F_1(\xxZero) - \eta_0)}{\varepsilon}}}}, \quad T = \frac{2}{\alpha} \log\rb{\nicefrac{8(F_1(\xxZero) - \eta_0)}{\varepsilon}} ,
 $$
 which implies $F_1(\xx^{(T)}) - \Fopt \leq \varepsilon / 2$ and $\lambda \plus{F_2(\xx^{(T)})} \leq \varepsilon / 2$ and, therefore, their sum is at most $\varepsilon$. We arrive at the contradiction with the initial assumption $F_1(\xxBarK) - \Fopt + \lambda \plus{F_2(\xxBarK)} > \varepsilon,$ there must be some $k^* \leq N$ such that
 $$
  F_1(\bar{\xx}^{(k^*)}) - \Fopt + \lambda \plus{F_2(\bar{\xx}^{(k^*)})} \leq \varepsilon.
 $$
 This concludes the proof.
\end{proof}

If we select $\lambda > \lambda^*$, the above theorem implies by strong
duality that $ F_1(\bar{\xx}^{(k^*)}) - \Fopt \leq \varepsilon,$ $
 \plus{F_2(\bar{\xx}^{(k^*)})} \leq \frac{\varepsilon}{\lambda - \lambda^*}
 . $ In particular, if $\lambda^* > 1$ and $\lambda = 2 \lambda^*$, then
$\plus{F_2(\bar{\xx}^{(k^*)})} \leq \varepsilon.$

\section{Numerical Simulations}
\label{chapter:Experiments}
This section presents numerical simulations that illustrate and verify the proposed algorithms. The examples are small or medium scale, and primarily serve to demonstrate the concepts and theoretical properties in a transparent manner.

\subsection{Visualization of Convergence and Optimization Trajectories (2D Case)}
\label{subsec:ExampleNonsmoothConstrainedLeastSquares}

\begin{figure}
 \centering
 \begin{subfigure}{0.49\textwidth}
  \centering
  \includegraphics[height=.75\linewidth, keepaspectratio]{
   ./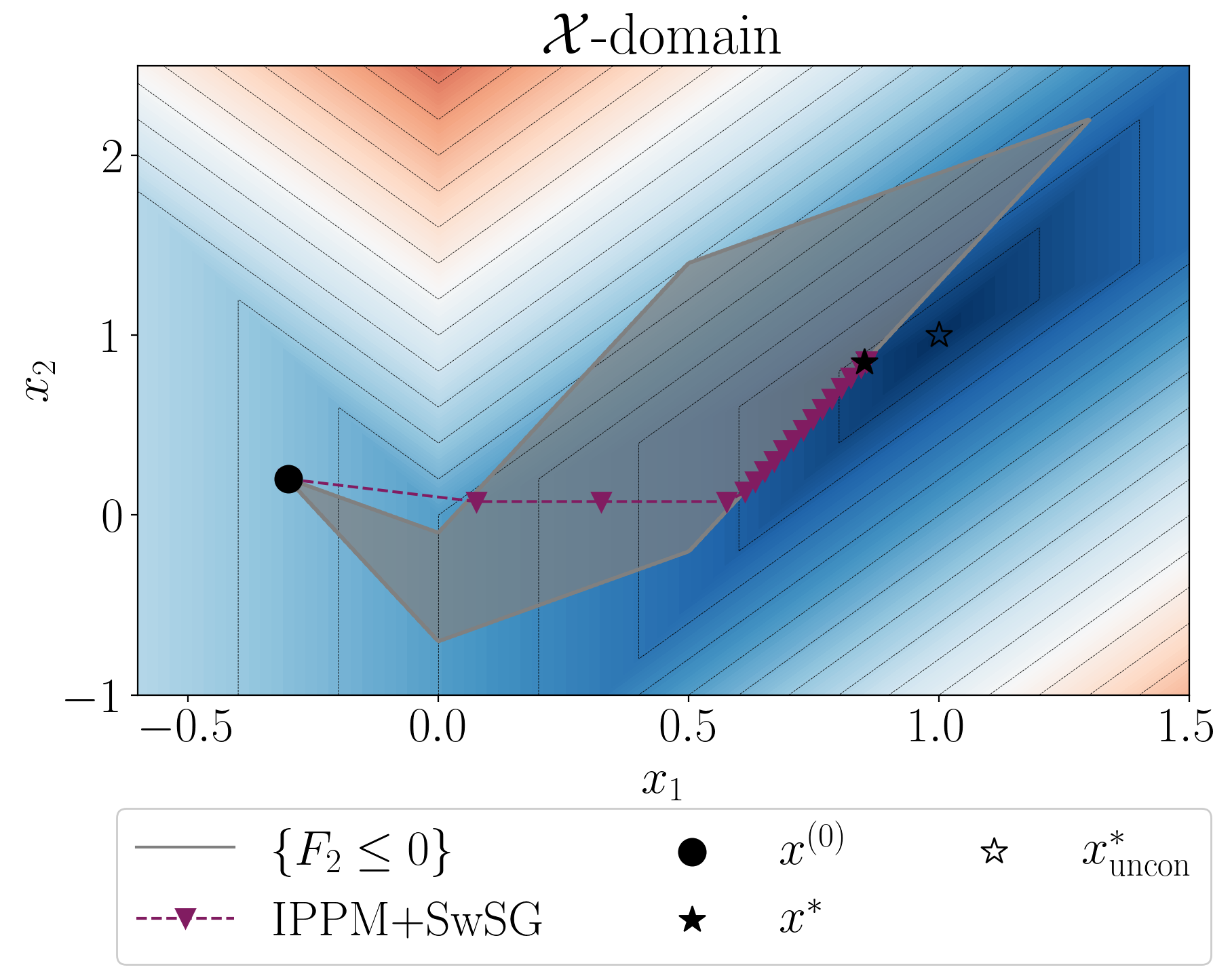
  }
  \caption{Iterates, $(\xxK)_k$, of IPPM+SwSG in the \textbf{non-convex} formulation.}
  \label{fig:NonsmoothLeastSquaresTracePlotX}
 \end{subfigure}
 \hfill
 \begin{subfigure}{0.49\textwidth}
  \centering
  \includegraphics[height=.75\linewidth, keepaspectratio]{
   ./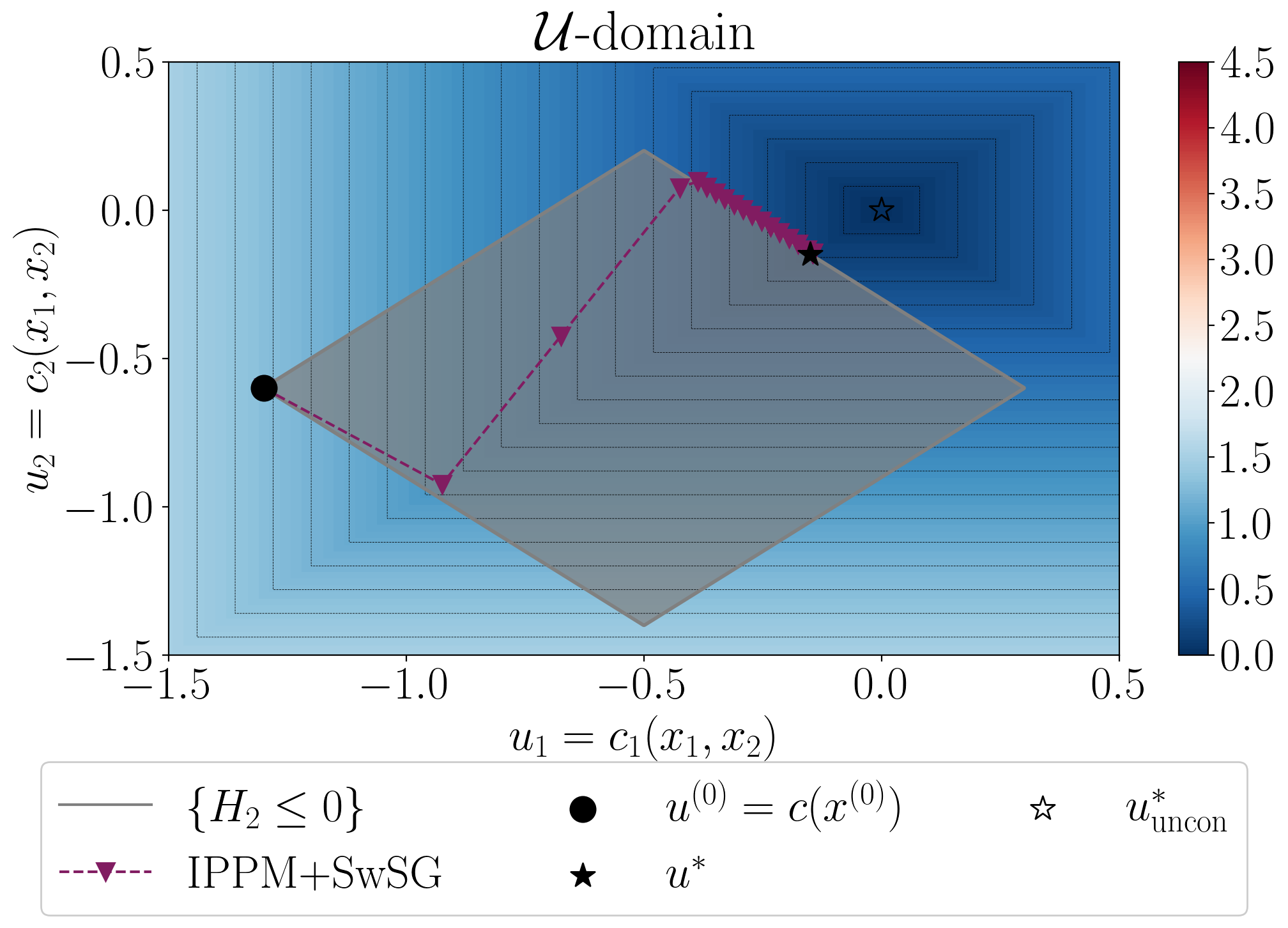
  }
  \caption{(Induced) iterates, $(c(\xxK))_k$, of IPPM+SwSG in the \textbf{convex} formulation.}
  \label{fig:NonsmoothLeastSquaresTracePlotU}
 \end{subfigure}
 \hfill
 \begin{subfigure}{0.99\textwidth}
  \centering
  \includegraphics[width=0.95\linewidth,keepaspectratio]{
   ./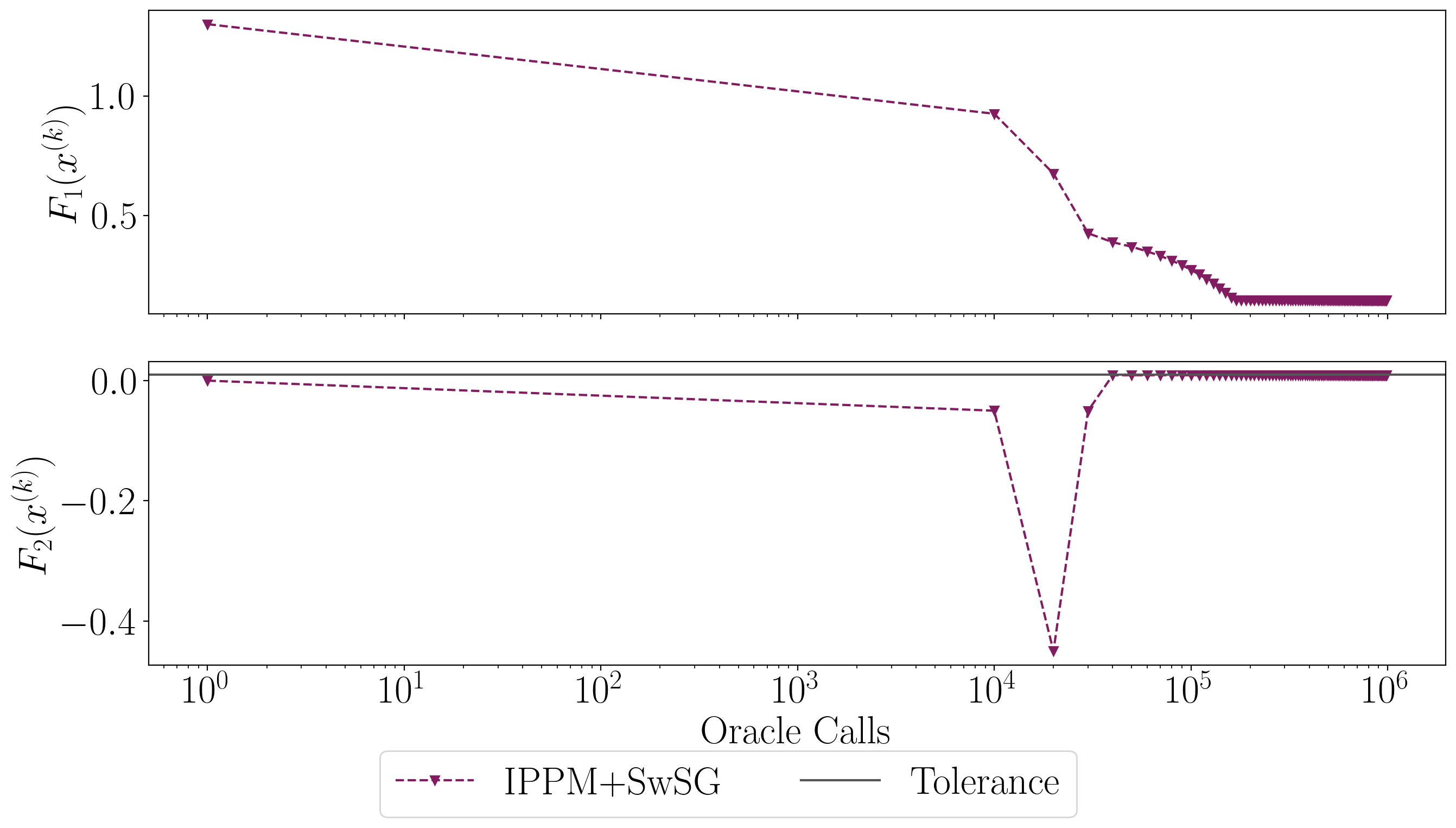
  }
  \caption{Objective value and constraint violation for IPPM+SwSG method (outer loop iterates, $(\xxK)_k$), oracle calls in log-scale.}
  \label{fig:NonsmoothLeastSquaresFuncValConstrViol}
 \end{subfigure}
 \caption[Non-Smooth Constrained Non-Linear Least Squares]{
  Solving the non-smooth \eqref{eq:ToyExampleLeastSquares} using
  the {Inexact Proximal Point Method with
    Switching Sub-Gradient (IPPM+SwSG)},
  cf. \Cref{subsubsec:IPPM_no_Slater}.
 }
 \label{fig:NonsmoothLeastSquaresComparison}
\end{figure}

\paragraph{Non-smooth Non-linear Least Squares.} First, we focus on the IPPM+SwSG algorithm and test it on a toy non-smooth
problem. In the two-dimensional case $\XX = \UU = \RR^2$, we use an
invertible map
\begin{align}
 \begin{aligned}
  c : \XX & \to \UU ,                                                                          \\
  \xx     & \mapsto (c_1(\xx), c_2(\xx))^\top \define (\xx_1 - 1, 2\abs{\xx_1}- x_2 -1)^\top ,
 \end{aligned}\;
 \label{eq:ToyExampleFunctionC}
\end{align}
which is non-smooth and has a Lipschitz inverse on $\XX=[-1, 2.5]^2$.
Our goal is to minimize the following non-smooth non-convex
{c}onstrained
{n}on-linear {l}east-{s}quares ({CNLS}) problem \cite{gurbuzbalabanNesterovsNonsmoothChebyshev2012,
 jarreNesterovsSmoothChebyshev2013} of the form:
\begin{align}
 \begin{aligned}
  \min_{\xx \in [-1,2.5]^2} F_1(\xx) & \define \norm{c(\xx) - b_1}_\infty ,       \\
  \text{s.t. } F_2(\xx)              & \define \norm{c(\xx) - b_2}_1 - 0.8 \leq 0
 \end{aligned}\;
 \tag{Ex-CNLS}
 \label{eq:ToyExampleLeastSquares}
\end{align}
with $b_1 \define (0,0)^\top, b_2 \define (-0.5, -0.6)^\top$. Note that
the problem above is non-smooth and non-convex (in variable $\xx \in \XX$),
in particular, we cannot use a projection onto a non-convex set $\cb{F_2 \leq 0}$, cf. feasible region in \Cref{fig:NonsmoothLeastSquaresTracePlotX}.

It can be verified that \eqref{eq:ToyExampleLeastSquares} is hidden convex
with $\mu_c = 4$ and satisfies \Assref{ass:AssumptionExactPPM} with $\rho =
 2$, and $G = 2$, e.g., using \cite[Thm.
 4.2]{drusvyatskiyEfficiencyMinimizingCompositions2019}, \cite[Cor.
 16.72]{bauschkeConvexAnalysisMonotone2017}. The solution to this problem is
$\xxOpt = (0.85, 0.85)^\top$ with $F_1(\xxOpt) = 0.15$, $\uuOpt = (-0.15,
 -0.15)^\top$, and $\lambdaOpt = 0.5$.\footnote{In this toy example, the
 optimizer can be obtained by solving the problem in the convex space $\UU$,
 projecting the global optima onto the feasible set, and mapping the point
 back to $\XX$.} We also verify that Slater's condition
(Definition~\ref{def:SlaterCondition}) holds, noting that $F_2((0.5,
 0.5)^\top) = -0.7 < 0$. The optimization trajectory of IPPM+SwSG is
presented in
\Cref{fig:NonsmoothLeastSquaresTracePlotX,fig:NonsmoothLeastSquaresTracePlotU}.
The proximal point iterates approach the constrained solution and remain in
the constrained (gray) region; the induced trajectory in the convex space
has a ``z'' shape due to non-linearity of the transformation
$c(\innerEmpty{}).$ Convergence and constraint violation plots in
\Cref{fig:NonsmoothLeastSquaresFuncValConstrViol} show that the proposed
algorithm successfully reduces the objective value and ensures the
constraint satisfaction.

\begin{figure}
 \centering
 \begin{subfigure}{0.49\textwidth}
  \centering
  \includegraphics[height=.75\linewidth, keepaspectratio]{
   ./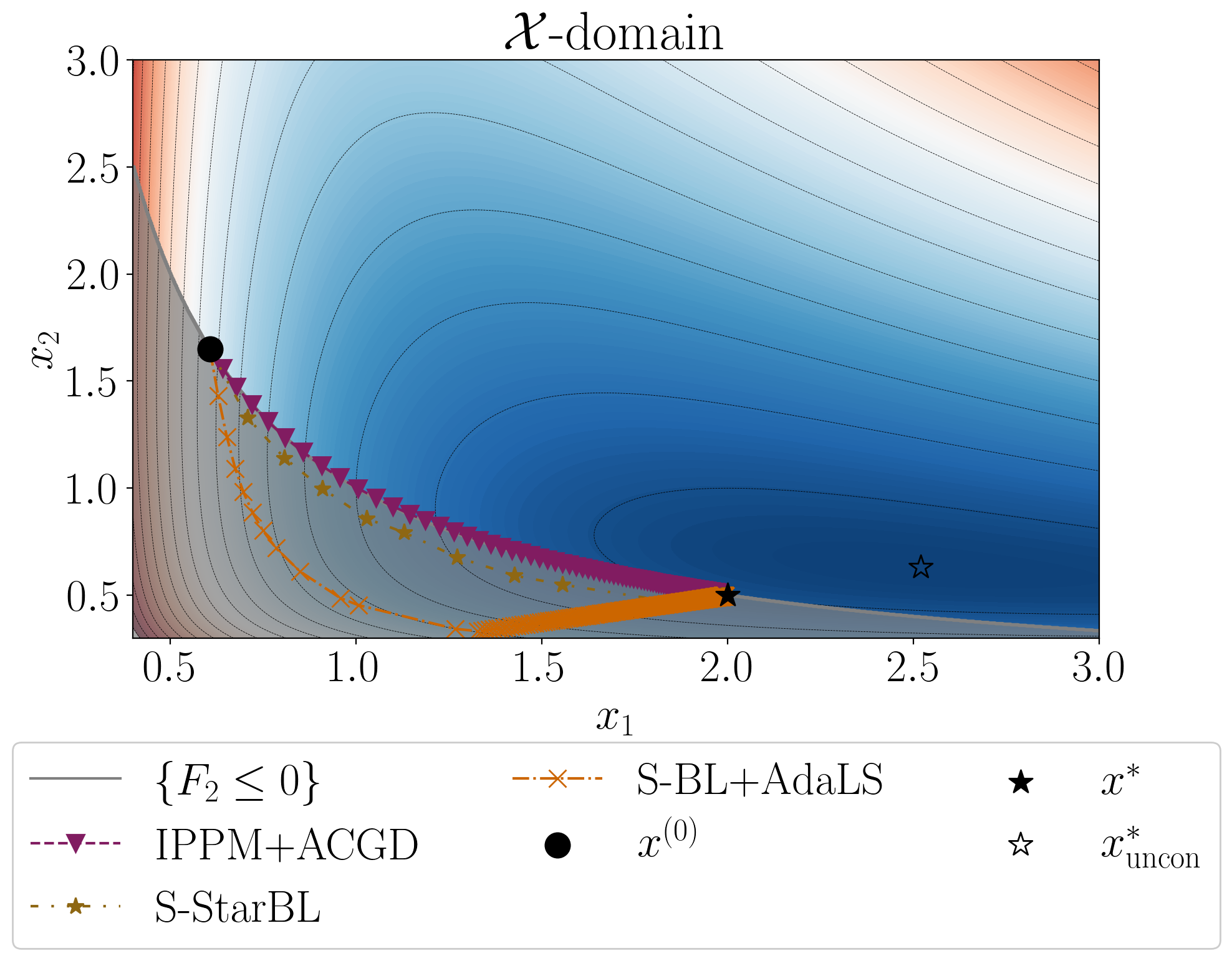
  }
  \caption{Iterates in the \textbf{non-convex} formulation.}
  \label{fig:SmoothGeometricProgrammingTracePlotX}
 \end{subfigure}
 \hfill
 \begin{subfigure}{0.49\textwidth}
  \centering
  \includegraphics[height=.75\linewidth, keepaspectratio]{
   ./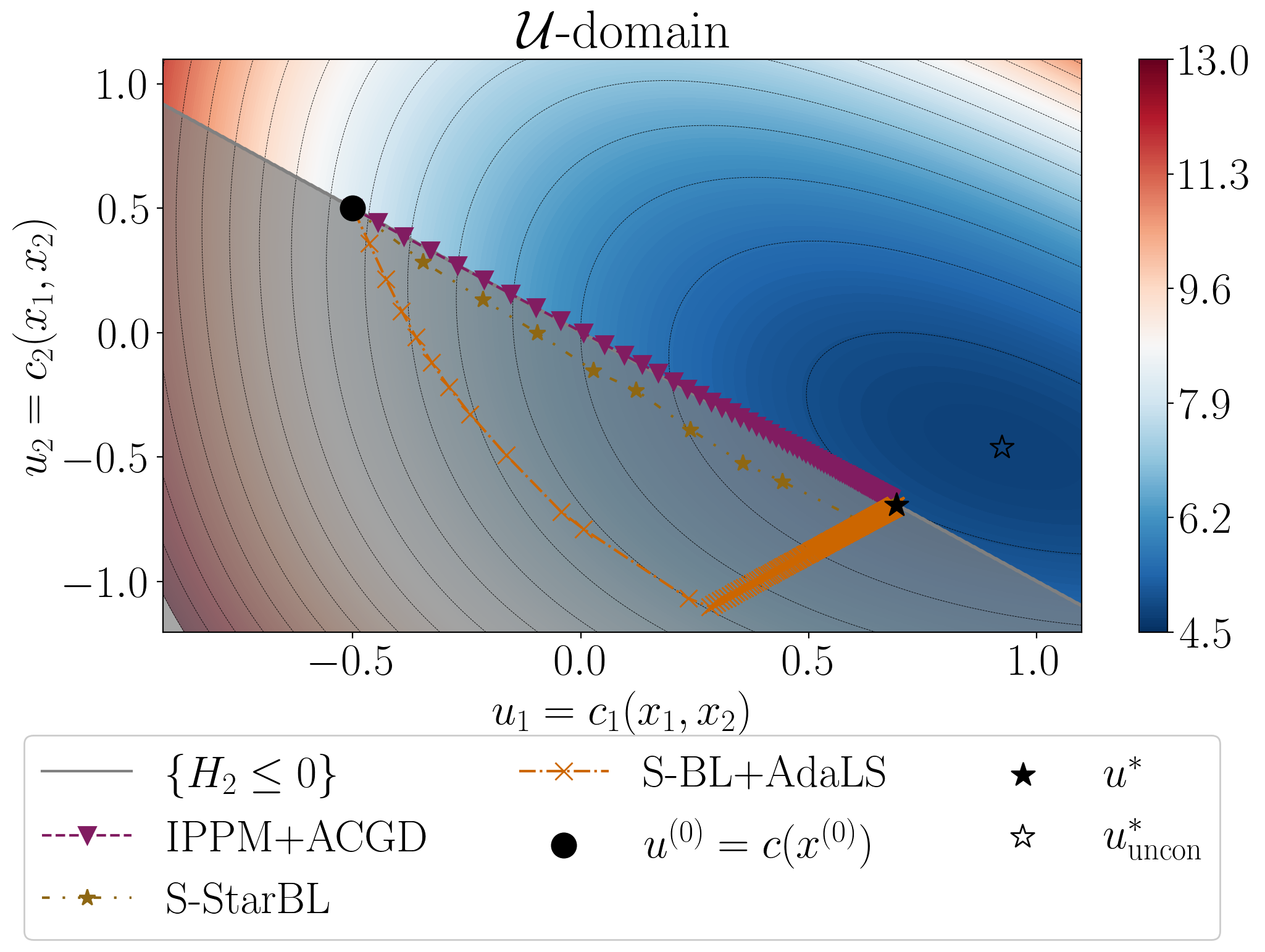
  }
  \caption{Iterates in the \textbf{convex} reformulation.}
  \label{fig:SmoothGeometricProgrammingTracePlotU}
 \end{subfigure}
 \hfill
 \begin{subfigure}{0.99\textwidth}
  \centering
  \includegraphics[width=0.95\linewidth,keepaspectratio]{
   ./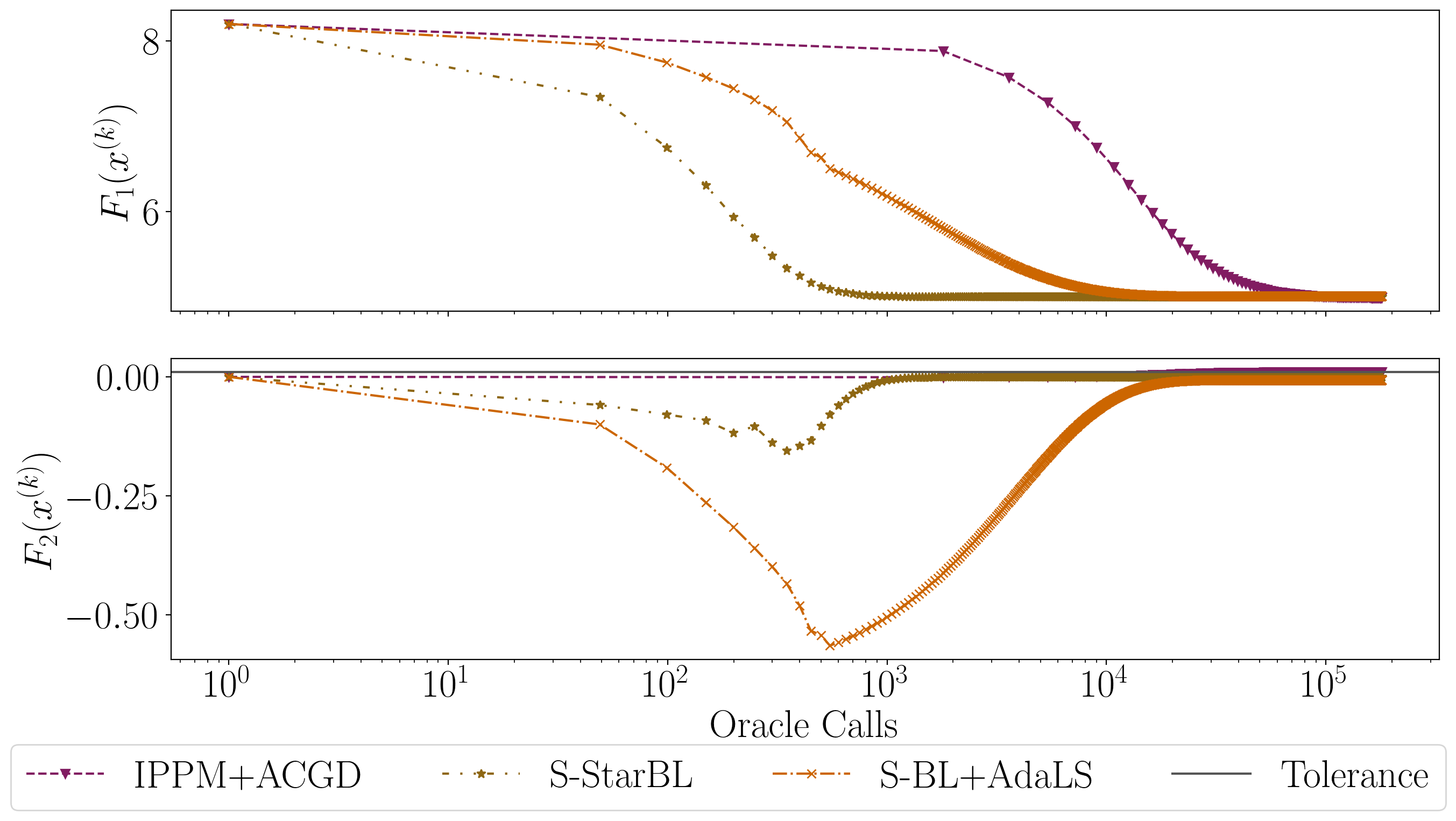
  }
  \caption{Objective value and constraint violation for $(\xxK)_k$, {oracle calls in log-scale.}}
  \label{fig:SmoothGeometricProgrammingFuncValConstrViol}
 \end{subfigure}
 \caption[Smooth Constrained Geometric Programming]{
  Solving the smooth
  \eqref{eq:ToyExampleGeometricProgramming} problem.
  Comparison of the
  \textcolor{black}{Inexact Proximal Point Method with
   Accelerated Constrained Gradient Descent (IPPM+ACGD)},
  cf. \Cref{subsubsec:IPPM_with_Slater},
  the
  Shifted Star Bundle-level (S-StarBL) with
  known $F_1^*$, cf. \Cref{subsec:PMMknownFstarNoCQ},
  and the
  and Shifted Bundle-level
  with Adaptive Line Search (S-BL+AdaLS)
  with unknown $F_1^*$ and $\eta_0 = 0$, cf.
  \Cref{subsec:PMMunknownFstarStrongDuality}.
  For both Bundle-level variants we only plot every 50th iterate
  to simplify the visualization.
 }
 \label{fig:SmoothGeometricProgrammingExample}
\end{figure}

\paragraph{Smooth Geometric Programming.} Now we test our more advanced algorithms IPPM+ACGD, S-StarBL, and
S-StarBL+AdaLS designed for smooth optimization. We use the previous
instance of \eqref{eq:ExampleIntroGeometricProgramming} problem from
\Cref{fig:IntroMinExample}:
\begin{align}
 \begin{aligned}
  \min_{\xx \in [0.4, 3]^2} \; & F_1(\xx)
  \define \xx_1 \cdot \xx_2 + \frac{4}{\xx_1} + \frac{1}{\xx_2}           ,     \\
  \mathrm{s.t.}\;              & F_2(\xx) \define \xx_1 \cdot \xx_2 - 1\leq 0 ,
 \end{aligned}
 \tag{Ex-CGP}
 \label{eq:ToyExampleGeometricProgramming}
\end{align}
where $\XX \define [0.4, 3]^2 \subset \RR^2_+$. As shown in
\Cref{sec:Introduction}, \eqref{eq:ToyExampleGeometricProgramming} is non-convex
in $\xx \in \XX$ but
hidden convex under the transformation function $c(\xx) \define (\log \xx_1,
 \log \xx_2)^\top$, and satisfies our assumptions with
$\mu_c = \frac{1}{3}, \rho = 1, G = 25.286$. This problem has the minimizer $\xxOpt = (2,0.5)^\top$ with $F_1(\xxOpt) = 5$, and $\uuOpt = (\log 2,-\log2)^\top$.
Slater's condition holds with $F_2((0.5, 0.5)) = -0.75 < 0$.

The results are shown in \Cref{fig:SmoothGeometricProgrammingExample}. We
observe that all three proposed algorithms successfully converge to the
global minima of this problem and respect the constraint satisfaction
during optimization process. IPPM+ACGD progresses slowly at the initial
phase due to the conflicting (non-convex) objective and constraint. The
S-StarBL method converges the fastest among the three, efficiently
leveraging the knowledge of the optimal value $\Fopt.$ In the absence of
$\Fopt$, our adaptive line-search (AdaLS) procedure converges slower than
the star version of the algorithm due to an underestimation of the
linearized objective caused by the misspecification of the optimal value
estimate $\eta_0$, cf. \Cref{fig:PolyakMinorantMethod}.

\subsection{Scalablility to High Dimensions}\label{subsec:NumExSmoothConstrainedGeometricProgramming}

\begin{figure}
 \centering
 \begin{subfigure}{0.49\textwidth}
  \centering
  \includegraphics[height=.65\linewidth, keepaspectratio]{
   ./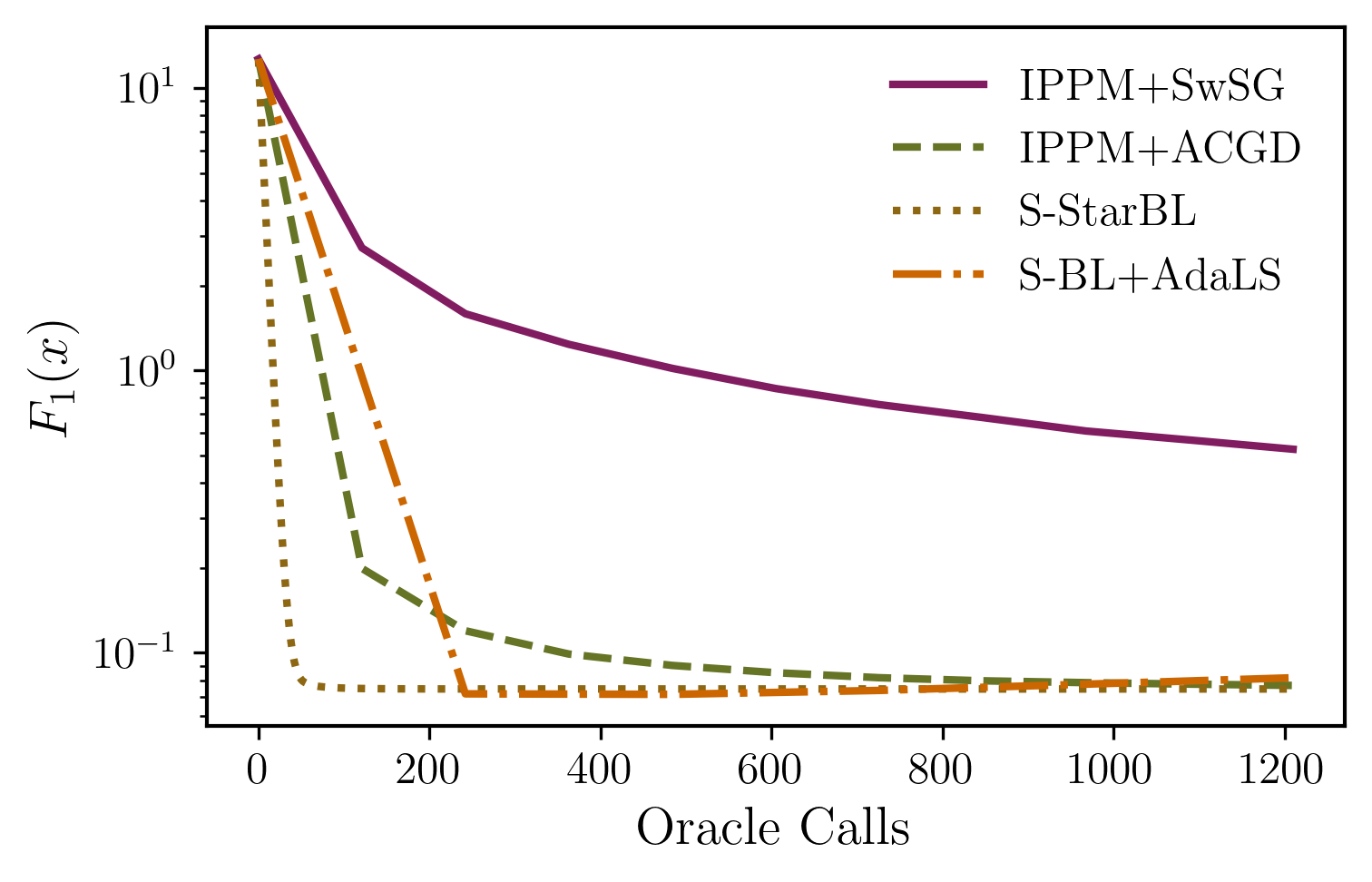
  }
  \caption{Objective value.}
  \label{fig:ObjectiveValue}
 \end{subfigure}
 \hfill
 \begin{subfigure}{0.49\textwidth}
  \centering
  \includegraphics[height=.65\linewidth,keepaspectratio]{
   ./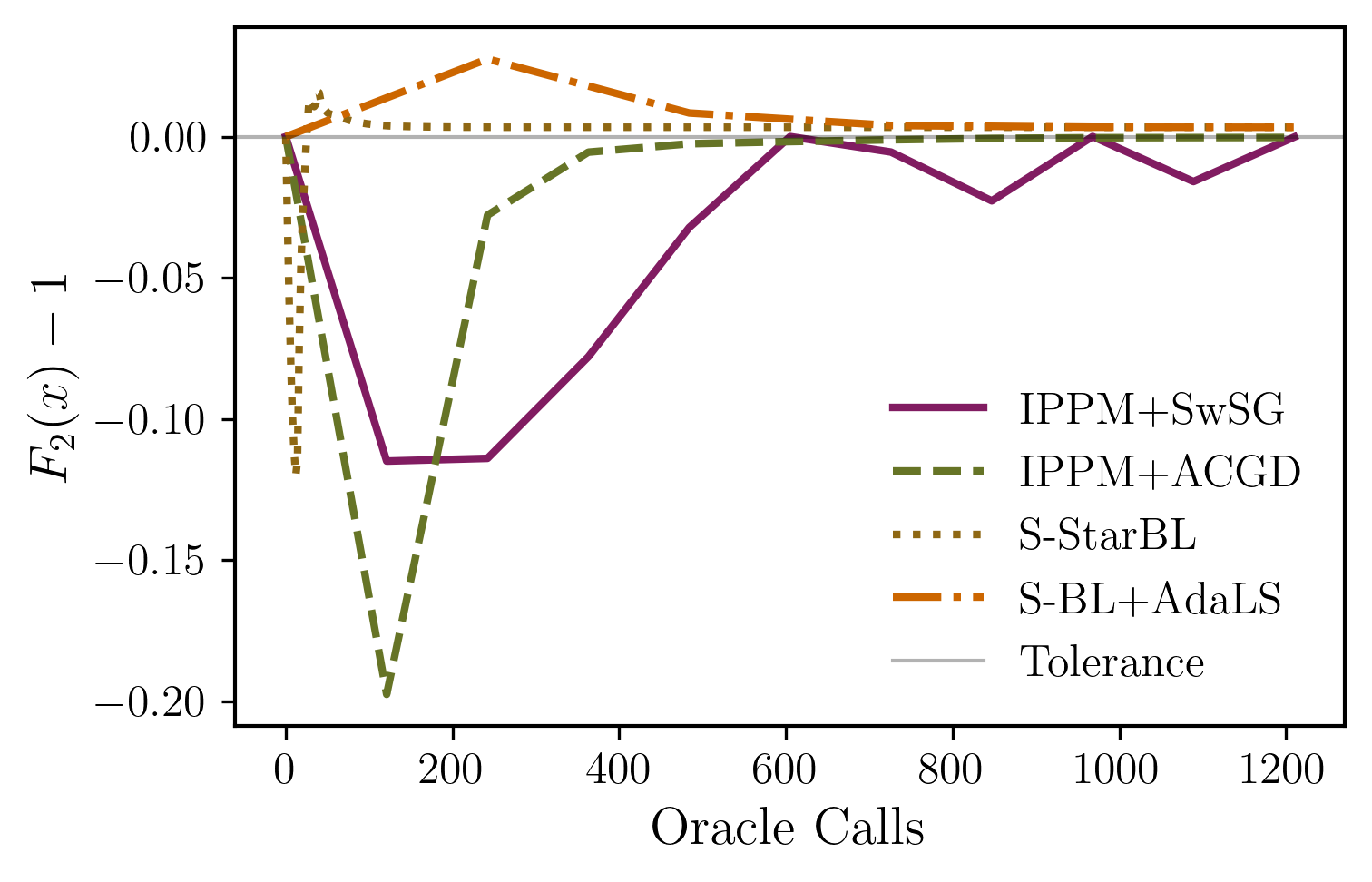
  }
  \caption{Constraint violation.}
  \label{fig:ConstraintViolation}
 \end{subfigure}
 \caption{Solving a high dimensional instance of \eqref{eq:ExampleIntroGeometricProgramming} using our IPPM+SwSG, IPPM+ACGD,
  S-StarBL and S-BL+AdaLS algorithms. We report the objective value
  and constraint violation vs. the cumulative oracle calls.} \label{fig:High_dim_experiment}
\end{figure}

Finally, we compare both proposed algorithms (with two variations each) on
a high dimensional constrained geometric programming problem
\eqref{eq:ExampleIntroGeometricProgramming}. We evaluate them on randomly
generated instances where both the objective and constraint are posynomials
$F_j(\xx) = \sum_{k_j} b_{k_j} \prod_i x_i^{a_{i,k_j}}$, $j\in \cb{1,2}.$
We fix the dimension $d = 100$ and the number of components $K_1 = 10$ and
$K_2 = 8$. Exponent rows $(a_{\cdot,k_j})$ are drawn uniformly from $[-0.5,
   0.5]$, and coefficients $b_{k_j}$ are sampled from a log-normal
distribution (normalized for the constraint so that $\sum_{k_2=1}^{K_2}
 b_{k_2} = 1$, making it nearly tight at $\xx = \mathbf{1}$). Iterates are
projected component-wise onto the box $[0.5, 2]^d$, starting from
$\xx^{(0)} = \mathbf{1}$, which is feasible. Before running our algorithms,
we use CVX solver \cite{cvx} to solve our problem in log-variables, which
provides the reference value $F_1^\star \approx 0.07114$. Our proximal
point algorithms use identical outer-loop settings: ten proximal epochs,
proximal weight $\rhoHat = 0.02$, tolerance $\tau = 10^{-3}$, oracle
tolerance $10^{-4}$, and inner budgets chosen so that each method performs
exactly 1,210 first-order oracle calls. IPPM+SwSG runs 121 switching
subgradient steps per epoch with step sizes $0.05/(t+1)$ and relaxation
$\alpha = 0.1$; IPPM+ACGD keeps $T_{\text{in}} = 60$, the same $\alpha =
 0.1$, and no additional constraint shift; S-StarBL performs 605
bundle-level projections with $\alpha = 0.3$, $\beta = 1$; and S-BL+AdaLS
executes five adaptive epochs of 121 bundle steps using $\alpha = 0.3$,
$\beta = 0.5$, $\lambda = 0.25$, and starting bound $\eta_0 = 0.5
 F_1^\star$.

The convergence and constraint satisfaction are illustrated in
\Cref{fig:High_dim_experiment}. We observe that all four algorithms
eventually satisfy the constraint up to a small tolerance of order
$10^{-3}$. Three algorithms, IPPM+ACGD, S-StarBL and S-BL+AdaLS, converge
to the global optimal value $\Fopt$, precomputed by the CVX solver. As
expected from theory, the convergence of IPPM+SwSG is relatively slow;
although it improves the objective value and respects the constraint, it
does not reach the optimum within the fixed number of oracle calls. Using
faster ACGD inner solver shows a clear benefit compared to SwSG and solves
the problem to high accuracy. The fastest algorithm is S-StarBL, which
converges using less than $100$ oracle calls, which is explained by its
access to the optimal value $\Fopt.$ S-BL+AdaLS also converges, but
slightly slower due to its online adaptive estimation of optimal value
$\Fopt$. We can also notice that S-BL+AdaLS initially violates the
constraint after the first outer loop iteration, however it quickly
recovers afterwards and its final iterate results in a small constraint
violation $\approx 3 \cdot 10^{-3}$. All above observations are in line
with our theoretical upper complexity bounds in
\Cref{sec:ComplexityAnalysis,sec:bundle_level}.

\section{Conclusion}
In this work, we study a class of non-convex constrained problems under
hidden convexity, proposing the first efficient algorithmic solutions under
minimal assumptions. Several questions arise from our work:
\begin{itemize}
 \item While we established a strong baseline for hidden convex constrained
       optimization in terms of oracle complexity, our algorithms are fairly
       complicated. It would be interesting to develop simple single-loop methods
       without additional quadratic linearly constrained subproblems, e.g.,
       similar to
       \cite{polyakGeneralMethodSolving1967,huangOracleComplexitySingleLoop2023}.
 \item Our oracle complexities match the best-known complexity in unconstrained
       setting; however, we still do not know what are the minimax optimal
       gradient complexities of hidden convex optimization.

 \item While we follow a common strategy of quadratic regularization in our PPM
       framework, an exploration of other natural choices is an interesting
       direction for future work. For example, it is conceptually possible to
       regularize the iterates in the $\UU$-space and leverage hidden convexity.
       While in general such approaches require knowledge of transformation
       $c(\innerEmpty{})$, this direction might be potentially useful in specific
       applications, e.g. convex reinforcement learning
       \eqref{eq:IntroExampleCvx-CMDP}, where an approximation of
       $c(\innerEmpty{})$ can be used to improve regularization and/or act as a
       precondition.

\end{itemize}

\section*{Acknowledgment}
This work is supported by the Prof.\ Dr.-Ing.\ Erich Müller-Stiftung,
the ETH AI Center Doctoral Fellowship, the Swiss National Centre of Competence in Research
(NCCR) Automation under grant agreement No.\ 51NF40\_180545, the Swiss National Science Foundation, the Kortschak Scholars Program, and the Office of Naval Research under grant N00014-24-1-2654.
\printbibliography

\newpage
\appendix
\addtocontents{toc}{\protect\setcounter{tocdepth}{1}}

\section{Technical Lemma}
\label{Appendix-sec:ProofsHiddenConvexClass}

\subsection{Proof of \Cref{lemma:HCContractionInequality}}
\label{lemma:HCContractionInequality-PROOF}

\begin{proof}
 By \eqref{eq:HCHCDefinitionConvexityInequality}, using the convexity of $H_i(\innerEmpty{}),$ we have
 \begin{align*}
  F_i(\xx_\alpha) & = F_i(c^{-1}((1-\alpha)c(\xx) + \alpha c(\yy)))   = H_i((1-\alpha)c(\xx) + \alpha c(\yy)) \\
                  & \leq (1-\alpha) H_i(c(\xx)) + \alpha H_i(c(\yy))                                          \\
                  & = (1-\alpha) F_i(\xx) + \alpha F_i(\yy) .
 \end{align*}
 By Lipschitz continuity of the inverse map \eqref{eq:HCDefinitionNormCfunctionInequality}, we have for any $\xx, \yy \in \XX$
 \begin{align*}
  \norm{\xx_\alpha - \xx}
  = \norm{c^{-1}((1-\alpha) c(\xx) + \alpha c(\yy)) - c^{-1}(c(\xx))}
  \leq \frac{1}{\mu_c} \norm{\alpha (c(\xx)- c(\yy))} .
 \end{align*}
\end{proof}

In the case of hidden \emph{strong} convexity, cf. \Cref{def:HC-strong},
above proposition can be strengthened by including the negative term on the
RHS of \eqref{eq:HstrongCHCDefinitionConvexityInequality}.

\section{Additional Material for \Cref{sec:AnalysisPPM}}
\label{Appendix-Sec:ProofsComplexityAnalysisPPM}

\subsection{Inexact PPM under Hidden Strong Convexity}
\label{appendix:InexactPPMunderHiddenStrongConvexity}
Similar to development in \cite{fatkhullinStochasticOptimizationHidden2024}, we can improve the convergence result in the case of hidden strong convexity.

\begin{definition}[Hidden Strong Convexity]\label{def:HC-strong}
 The problem \eqref{eq:MainProblem} is called \emph{hidden strongly convex}
 with modulus $\mu_c > 0, \mu_H > 0$, if its components satisfy the following underlying conditions.
 \begin{enumerate}
  \item The domain $\UU = c(\XX)$ is convex, the functions $H_1, H_2 : \UU \to \RR$
        are convex, i.e. satisfy for $i = 1, 2$ and for all $u, v \in \UU$ and all
        $\lambda \in [0,1]$
        \begin{align}
         H_{i}((1-\lambda)\uu + \lambda \vv) \leq (1-\lambda)H_i(\uu)
         + \lambda H_i(\vv)
         - \frac{(1-\lambda)\lambda \mu_H}{2} \norm{\uu - \vv}^2.
         \tag{HSC-1}
         \label{eq:HstrongCHCDefinitionConvexityInequality}
        \end{align}
        Additionally, we assume \eqref{eq:MainProblem}
        admits a solution $\uu^* \in \UU$ with its corresponding objective function
        value $\Fopt \eqdef H_1(\uu^*) = F_1(c^{-1}(\uu^*))$.
  \item The map $c : \XX \to \UU$ is invertible and there exists a $\mu_c > 0$ such
        that for all $\xx, \yy \in \XX$ it holds
        \begin{align}
         \norm{c(\xx) - c(\yy)} \geq \mu_c \norm{\xx - \yy} \tag{HSC-2}.
         \label{eq:HstrongCDefinitionNormCfunctionInequality}
        \end{align}
 \end{enumerate}
\end{definition}

\begin{theorem}[Hidden Strongly Convex, Inexact PPM]
 \label{thm:MainResultHSC-PPM}
 Assume \eqref{eq:MainProblem} is hidden strongly convex with modulus $\mu_H > 0$
 and \Assref{ass:AssumptionExactPPM} holds. Let us have access to an oracle
 $\Oracle_{\epsin}$ and an initial $\tau$-feasible point $\xxZero$, where
 $\tau > 0$ is the desired constraint violation. Given a lifting parameter
 $\rhoHat > \rho$, and a desired tolerance $\epsilon > 0$ for the optimality gap,
 then after $N \in \NN$ steps of \Cref{algo:PPM}, we have
 \begin{align*}
  F_1(\xxNnext) - \Fopt \leq (1-\alpha)^N \Delta_0 + \frac{\epsin}{\alpha}
  \leq \epsilon,
 \end{align*}
 with $\alpha \define \frac{\mu_c^2 \mu_H}{\rhoHat + \mu_c^2 \mu_H} \in \brac{0,1}$.
 In particular, setting $\rhoHat \define 2 \rho$ and
 $\epsin \define \frac{\alpha \epsilon}{2}$, the last iterate satisfies
 \begin{align*}
  F_1(\xxN) - \Fopt \leq \epsilon \text{ and } F_2(\xxN) \leq \tau
 \end{align*}
 after
 \begin{align*}
  N \geq \frac{\rhoHat + \mu_c^2 \mu_H}{\mu_c^2 \mu_H}
  \cdot \log \paren*{\frac{2\Delta_0}{\epsilon}} = \OO{\log \epsilon^{-1}}
 \end{align*}
 iterations, where $\Delta_0 \define F_1(\xxZero) - \Fopt$.
 The total rate is given by
 \begin{align*}
  \Ttotal \geq N \cdot \Tinner(\epsin) =
  \frac{2\rho + \mu_c^2 \mu_H}{\mu_c^2 \mu_H}
  \cdot \log \paren*{\frac{2\Delta_0}{\epsilon}}
  \cdot \Tinner\paren*{\frac{\mu_c^2 \mu_H}{2\rho + \mu_c^2 \mu_H}
   \frac{\epsilon}{2}}.
 \end{align*}
\end{theorem}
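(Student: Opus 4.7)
The plan is to retrace the proof of Theorem~\ref{thm:MainResultHCPPM} while harvesting the additional curvature provided by hidden \emph{strong} convexity. In Theorem~\ref{thm:MainResultHCPPM} the recursion carried an $\OO{\alpha^2}$ error term stemming from the quadratic regularizer in the PPM subproblem, which forced $\alpha \approx \varepsilon$ and gave the sublinear $\OOTilde{\varepsilon^{-1}}$ outer complexity. Here the negative term $-\frac{(1-\alpha)\alpha\mu_H}{2}\,\norm{c(\xxK)-c(\xxOpt)}^2$ produced by $\mu_H$-strong convexity of $H_1$ on $\UU$ should cancel the $\alpha^2$ noise for a balanced constant choice of $\alpha$, yielding a clean linear recursion.

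The first step is to show that the shifted PPM subproblem \eqref{eq:InexactPPMProblemInnerLoopHCHC} admits a Slater point at every outer iteration. This is the direct analog of Lemma~\ref{le:Slater_PPM_inner}: for $\xx^{(k)}_\alpha \define c^{-1}((1-\alpha) c(\xxK) + \alpha c(\xxOpt))$, apply the strong convexity inequality \eqref{eq:HstrongCHCDefinitionConvexityInequality} to $H_2$, combine with $\tau$-feasibility of $\xxK$, and bound the regularization term by $\frac{\rhoHat}{2}\norm{\xx^{(k)}_\alpha - \xxK}^2 \leq \frac{\rhoHat\alpha^2}{2\mu_c^2}\norm{c(\xxK) - c(\xxOpt)}^2$ using \eqref{eq:HstrongCDefinitionNormCfunctionInequality}. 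Picking $\alpha \leq \frac{\mu_c^2 \mu_H}{\rhoHat + \mu_c^2 \mu_H}$ makes the curvature and regularization terms cancel, leaving the strict bound $-\alpha\tau<0$. This licenses calling the feasible inner oracle $\Oracle_{\epsin}$ and, by induction on $k$, ensures that every outer iterate remains $\tau$-feasible for \eqref{eq:MainProblem}.

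The descent step is the core calculation. Inexactness of $\Oracle_{\epsin}$ gives $\phiK{1}(\xxKnext) \leq \phiK{1}(\xxHatKnext) + \epsin \leq \phiK{1}(\xx^{(k)}_\alpha) + \epsin$ by optimality of $\xxHatKnext$ and the just-established feasibility of $\xx^{(k)}_\alpha$. Expanding $\phiK{1}(\xx^{(k)}_\alpha)$ and invoking strong convexity of $H_1$ together with the norm contraction yields
\begin{align*}
 F_1(\xxKnext) - \Fopt
  & \leq (1-\alpha)(F_1(\xxK) - \Fopt) \\
  & \qquad + \Big[\tfrac{\rhoHat\alpha^2}{2\mu_c^2} - \tfrac{(1-\alpha)\alpha\mu_H}{2}\Big]\,\norm{c(\xxK)-c(\xxOpt)}^2 + \epsin.
\end{align*}
With the declared choice $\alpha \define \frac{\mu_c^2\mu_H}{\rhoHat + \mu_c^2\mu_H}$ the bracket is non-positive, so
\begin{align*}
 F_1(\xxKnext) - \Fopt \leq (1-\alpha)(F_1(\xxK) - \Fopt) + \epsin,
\end{align*}
and unrolling from $k=0$ to $N-1$, together with the geometric sum $\sum_{k=0}^{N-1}(1-\alpha)^k \leq 1/\alpha$, delivers $F_1(\xxNnext) - \Fopt \leq (1-\alpha)^N \Delta_0 + \epsin/\alpha$.

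To finish, set $\rhoHat = 2\rho$ and $\epsin = \alpha\varepsilon/2$, and require each of the two summands to be at most $\varepsilon/2$. The first condition gives $N \geq \tfrac{1}{\alpha}\log(2\Delta_0/\varepsilon) = \tfrac{\rhoHat+\mu_c^2\mu_H}{\mu_c^2\mu_H}\log(2\Delta_0/\varepsilon)$, and the total oracle complexity follows by multiplying with $\Tinner(\epsin)$. The main obstacle is not technical but conceptual: recognizing that the scaling $\norm{\xx^{(k)}_\alpha-\xxK} \lesssim \alpha/\mu_c$ from \eqref{eq:HstrongCDefinitionNormCfunctionInequality} and the curvature gain from \eqref{eq:HstrongCHCDefinitionConvexityInequality} are of precisely matched form, so the only obstruction to linear convergence in the weakly-convex-only setting was the absence of this curvature; once present, the balance is an elementary exercise on the scalar $\alpha$.
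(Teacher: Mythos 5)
Your proof is correct and follows essentially the same route as the paper's (which is itself only a brief sketch deferring to Theorem~\ref{thm:MainResultHCPPM}): you construct the Slater point $\xx^{(k)}_\alpha$, use the strengthened inequality \eqref{eq:HstrongCHCDefinitionConvexityInequality} so that the negative curvature term $-\tfrac{(1-\alpha)\alpha\mu_H}{2}\norm{c(\xxK)-c(\xxOpt)}^2$ absorbs the regularization error $\tfrac{\rhoHat\alpha^2}{2\mu_c^2}\norm{c(\xxK)-c(\xxOpt)}^2$ for the constant $\alpha = \tfrac{\mu_c^2\mu_H}{\rhoHat+\mu_c^2\mu_H}$, and unroll the resulting linear recursion. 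Your write-up is in fact more explicit than the paper's about why this exact $\alpha$ makes the bracket non-positive and why the same cancellation restores strict feasibility $-\alpha\tau<0$ of the shifted subproblem.
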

\begin{proof}
 Following similar arguments as in the proof of \Cref{thm:MainResultHCPPM}, the choice of
 $\alpha$ ensures the feasibility, i.e. $\xxK_\alpha$ satisfies
 \eqref{eq:strict_feasibility} with $-\alpha \cdot \tau < 0$.
 We obtain a recursion of the form
 \begin{align*}
  F_1(\xxKnext) - \Fopt \leq (1-\alpha)\paren{F_1(\xxK) - \Fopt} + \epsin,
 \end{align*}
 by the choice of $\alpha$. Thus, unrolling yields
 \begin{align*}
  F_1(\xxNnext) - \Fopt \leq (1-\alpha)^N \Delta_0 + \frac{\epsin}{\alpha}
  \leq \epsilon,
 \end{align*}
 where the last steps holds by the choice of $\alpha, \epsin$ and $N$ as
 in the Theorem statement.
\end{proof}

\subsection{Improving Optimality Criterion in the Unconstrained Case}
\label{subsubsec:OptCritUnconstrainedcase}
Based on the results in \Cref{subsec:InexactPPM}, we can improve the optimality criterion formulated in terms of the optimality gap of the Moreau envelop \cite{fatkhullinStochasticOptimizationHidden2024}.

\begin{corollary}[Hidden Convex, Unconstrained]
 \label{corr:UnconstrainedHiddenConvex}
 We assume \eqref{eq:MainProblem} is an \textbf{unconstrained} hidden convex
 problem, i.e. $F_2 \equiv 0$, \Assref{ass:AssumptionExactPPM} holds,
 and we have access to an approximate solution oracle
 $\Oracle_{\epsin}$.
 Given a target precision $\epsilon > 0$ and a lifting parameter
 $\rhoHat > \rho$, we run \Cref{algo:PPM}
 for $N \in \NN$ steps to obtain the contraction
 \begin{align*}
  F_1(\xxN) - \Fopt \leq (1 - \alpha)^{N} (F_1(x^{(0)}) - \Fopt)
  + \frac{\epsilon^2}{\alpha} + \frac{\alpha \DDUsq \rhoHat}{2 \mu_c^2},
 \end{align*}
 with $\alpha \define \frac{2 \mu_c^2 }{4\mu_c^2 + \DDUsq \rhoHat}\epsilon \in \brac{0,1}$.
 In particular, with $\rhoHat \define 2\rho$, we have
 \begin{align*}
  {F_1(\xxN) - \Fopt} \leq \varepsilon
 \end{align*}
 after
 $N \geq \frac{4\mu_c^2 + 2\DDUsq \rho}{2 \mu_c^2} \frac{1}{\epsilon}
  \log\left(\frac{\Delta_0 (4\mu_c^2 + 2\DDUsq \rho)}{2 \mu_c^2}
  \frac{1}{\epsilon}\right)$ outer loops.
 With $\rhoHat \define 2\rho$ and with the sub-gradient method (SM)
 as an inner solver, we require overall
 \begin{align*}
  \Ttotal & \geq N \cdot \TinnerUp{SM}(\epsilon^2)
  = \frac{4\mu_c^2 + 2\DDUsq \rho}{2 \mu_c^2} \frac{1}{\epsilon}
  \log\left(\frac{\Delta_0 (4\mu_c^2 + 2\DDUsq \rho)}{2 \mu_c^2}
  \frac{1}{\epsilon}\right)
  \cdot \frac{G^2}{\rho}\frac{1}{\epsilon^2}
  = \OOTilde{\epsilon^{-3}}
 \end{align*}
 sub-gradient evaluations.
\end{corollary}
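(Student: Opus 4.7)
The plan is to specialize the IPPM analysis of \Cref{thm:MainResultHCPPM} to the unconstrained case $F_2 \equiv 0$. The central simplification is that in the proof of \Cref{thm:MainResultHCPPM}, the interpolation point $\xxK_\alpha \define c^{-1}((1-\alpha) c(\xxK) + \alpha c(\xxOpt))$ was required to be feasible for the shifted subproblem, which forced $\alpha \leq \min\{1, \mu_c^2 \tau / (\rhoHat \DDUsq)\}$. When $F_2 \equiv 0$ feasibility is automatic and $\alpha$ may be chosen freely in $[0,1]$, which removes the $\tau$-dependence and unlocks a better balancing of the error terms.

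First, I would reproduce the one-step recursion along the lines of \eqref{eq:req_main_x_alpha1}--\eqref{eq:req_main_x_alpha2} in the proof of \Cref{thm:MainResultHCPPM}. Combining the inexact oracle guarantee $\phiK{1}(\xxKnext) \leq \phiK{1}(\xxHatKnext) + \epsin$, the comparison $\phiK{1}(\xxHatKnext) \leq \phiK{1}(\xxK_\alpha)$ (this step now uses only the optimality of $\xxHatKnext$; no constraint-feasibility of $\xxK_\alpha$ is needed), together with \eqref{eq:HCContractionInequalityObjective}, \eqref{eq:HCContractionInequalityNorm}, and the diameter bound $\DDU$, one obtains for any $\alpha \in [0,1]$
\begin{equation*}
F_1(\xxKnext) - \Fopt \;\leq\; (1-\alpha)\bigl(F_1(\xxK) - \Fopt\bigr) + \frac{\rhoHat \DDUsq \alpha^2}{2\mu_c^2} + \epsin .
\end{equation*}
Unrolling for $N$ steps and bounding the partial geometric sum $\sum_{k=0}^{N-1}(1-\alpha)^k$ by $1/\alpha$, and then setting $\epsin \define \epsilon^2$, gives precisely the first displayed inequality of the statement.

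Second, I would plug in $\alpha \define \frac{2 \mu_c^2}{4\mu_c^2 + \DDUsq \rhoHat}\,\epsilon$ and $\rhoHat \define 2\rho$. With this choice the constant error $\alpha \DDUsq \rhoHat / (2\mu_c^2)$ and the scaled oracle error $\epsin/\alpha = \epsilon^2/\alpha$ each contribute $\OO{\epsilon}$, and forcing the initial-gap term $(1-\alpha)^N \Delta_0$ to be at most $\epsilon/2$ requires $N = \OOTilde{1/\alpha} = \OOTilde{\epsilon^{-1}}$, yielding the claimed outer-loop bound by taking a logarithm.

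Finally, I would instantiate the inner oracle by running the classical sub-gradient method on the $\rhoHat$-strongly convex, $G$-Lipschitz subproblem $\phiK{1}$. The standard strongly convex SM analysis gives $\epsin$-accuracy in $\TinnerUp{SM}(\epsin) = \OO{G^2/(\rhoHat\, \epsin)}$ sub-gradient evaluations; setting $\epsin = \epsilon^2$ yields $\TinnerUp{SM} = \OO{\epsilon^{-2}}$, and multiplying with the outer count $N = \OOTilde{\epsilon^{-1}}$ produces the total complexity $\Ttotal = \OOTilde{\epsilon^{-3}}$. The main subtlety I expect is the aggressive choice $\epsin = \epsilon^2$: in the constrained analysis $\epsin$ was set proportional to $\alpha \epsilon$ to keep $\epsin/\alpha$ small, so I need to verify that here $\epsin/\alpha = \OO{\epsilon}$ remains valid when both $\alpha$ and $\sqrt{\epsin}$ scale like $\epsilon$ — which it does under the stated parameter choice, matching exactly the $\epsilon^2/\alpha$ term in the first displayed inequality.
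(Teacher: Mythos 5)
Your proposal is correct and follows essentially the same route as the paper, whose own proof is simply the remark that one repeats the argument of \Cref{thm:MainResultHCPPM} while dropping the constraint-violation analysis; your key observation that feasibility of $\xxK_\alpha$ is automatic when $F_2\equiv 0$, so $\alpha\in[0,1]$ is unconstrained, is exactly the intended simplification. The one-step recursion, the unrolling with the geometric-sum bound $1/\alpha$, the choice $\epsin=\epsilon^2$, and the strongly convex sub-gradient inner solver all match the paper's derivation (up to the same constant-factor bookkeeping the paper itself uses).
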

\begin{proof}
 The proof is analogous to the proof of \Cref{thm:MainResultHCPPM},
 disregarding the analysis of the constraint violation.
\end{proof}

\subsection{Proofs of Corollaries~\ref{cor:FinalRateNonSmoothIPPM}, \ref{cor:FinalRateSmoothIPPM_ACGD} and \ref{cor:FinalRateSmoothIPPM_ACGD_Slater}}\label{subsec:ProofofCorollaries}

\begin{proof}[of \Cref{cor:FinalRateNonSmoothIPPM}]
 By the HC--Slater's \Cref{le:Slater_PPM_inner}.\ref{le:Slater_PPM_inner-ITEM1}, we know that for any $k\geq 0$, the problem \eqref{eq:InexactPPMProblemInnerLoopHCHC}
 satisfies $\frac{\alpha\, \tau}{2}$--Slater's condition, thus the ``stricter'' constraint $
  \phi_2^{(k)}(\xx) - \tau + \frac{\alpha\, \tau}{3} \leq 0 ,
 $ is still feasible. We apply the SwSG method to the sub-problem
 $$
  \min_{\xx\in \XX} \phi_1^{(k)}(\xx) , \qquad \text{s.t.} \quad
  \phi_2^{(k)}(\xx) - \tau + \frac{\alpha\, \tau}{3} \leq 0
 $$
 with a switching rule:
 $$
  \phi_2^{(k)}(\xx) - \tau + \frac{\alpha\, \tau}{3} \leq \epsin ,
 $$
 where $\epsin \leq \frac{\alpha\, \tau}{3} $, which ensures the feasibility of the output of the inner solver for sub-problem \eqref{eq:InexactPPMProblemInnerLoopHCHC}. It remains to set $$\alpha = \min\left\{\frac{2\mu_c^2 \epsilon}{3\rhoHat\DDUsq},
  \frac{\mu_c^2 \tau}{\rhoHat \DDUsq}\right\} , $$
 $$\epsin = \frac{\alpha}{3} \min\left\{\varepsilon,
  \tau \right\} \geq \frac{2\mu_c^2 }{9\rhoHat\DDUsq} \min\left\{ \varepsilon^2,
  \tau^2\right\} , $$
 and use \cite[Cor. 3.3]{jiaFirstOrderMethodsNonsmooth2025} with their $\tau$ in place of our $\epsin$ with the stepsize parameters
 $$
  \eta_t = \frac{2}{\rho (t+2) + \frac{144 \rho}{t+1}} .
 $$
 This concludes the proof.
\end{proof}

\begin{proof}[of \Cref{cor:FinalRateSmoothIPPM_ACGD}]
 By the HC--Slater's \Cref{le:Slater_PPM_inner}.\ref{le:Slater_PPM_inner-ITEM1},
 we know that for any $k\geq 0$, the problem
 \eqref{eq:InexactPPMProblemInnerLoopHCHC} satisfies
 $\frac{\alpha\, \tau}{2}$-Slater's condition.
 Consider the following shifted sub-problem
 \begin{equation}\label{eq:acgd_subproblem}
  \min_{\xx\in \XX} \phi_1^{(k)}(\xx) , \qquad \text{s.t.} \quad
  \phi_2^{(k)}(\xx) - \tau + \frac{\alpha\, \tau}{3} \leq 0 ,
 \end{equation}
 which satisfies $\frac{\alpha\, \tau}{6}$--Slater's condition. We know that
 $$
  \phi_1^{(k)}(\xxHatKnext) \geq \FLB{1}, \qquad \phi_1^{(k)}(\xx_{\alpha}^{(k)}) \leq F_1(\xx_{\alpha}^{(k)}) + \frac{\rhoHat\DDUsq \alpha^2}{2 \mu_c^2} ,
 $$
 where the last inequality holds by comparing \eqref{eq:req_main_x_alpha1} and \eqref{eq:req_main_x_alpha2}. Since $F_1(\xx_{\alpha}^{(k)}) \leq \FUB{1}$, and by the requirements on $\alpha$ and $\varepsilon$, we have $\frac{\rhoHat\DDUsq \alpha^2}{2\mu_c^2} \leq \frac{2 \mu_c^2 \varepsilon^2}{2 \rhoHat\DDUsq} \leq \varepsilon$.
 By invoking \cite[Lem. 3.1.21]{nesterovLecturesConvexOptimization2018},
 we can bound the Lagrange multiplier of this problem with
 $$
  \lambdaOpt \leq \frac{\phi_1^{(k)}(\xx_{\alpha}^{(k)}) - \phi_1^{(k)}(\xxHatKnext)}{\frac{\alpha\,\tau}{6}} \leq \frac{\FUB{1} - \FLB{1} + \varepsilon}{\frac{\alpha\,\tau}{6}}   .
 $$

 Since $\lambda^* < +\infty$, we can apply ACGD method
 \cite{zhangSolvingConvexSmooth2022}, to solve this sub-problem with the
 target accuracy $\epsin \leq \frac{\alpha\, \tau}{3} $. This will ensure
 the feasibility of the output of the inner solver for sub-problem
 \eqref{eq:InexactPPMProblemInnerLoopHCHC}.

 To compute the oracle complexity of this method, we need to upper bound the
 Lipschitz constant of the Lagrangian associated with sub-problem
 \eqref{eq:acgd_subproblem} (see (2.6) and (2.10) in
 \cite{zhangSolvingConvexSmooth2022}): $$ L(\Lambda_1) = L\cdot(1 +
  \Lambda_1), \qquad \text{where} \quad \Lambda_1 \define \{0\} \cup
  \{\lambda^* + \lambda : \lambda \in [0, 1] \} . $$ Using the upper bound on
 $\lambda^*$, we have $$ L(\Lambda_1) = \OOTilde{\frac{L (\FUB{1} - \FLB{1}
    + \varepsilon)}{\alpha \tau }} = \OOTilde{\frac{\rho L \DDUsq (\FUB{1} -
    \FLB{1} +\varepsilon)}{ \mu_c^2 \min\left\{\varepsilon, \tau \right\} \tau
   }}, $$ where we set $\alpha = \min\left\{\frac{2\mu_c^2
   \epsilon}{3\rhoHat\DDUsq}, \frac{\mu_c^2 \tau}{\rhoHat \DDUsq}\right\}$,
 and $\rhoHat = 2 \, \rho$. Now we set the accuracy for the inner solver
 ACGD as $$\epsin = \frac{\alpha}{3} \min\left\{\varepsilon, \tau \right\}
  \geq \frac{2\mu_c^2 }{9\rhoHat\DDUsq} \min\left\{ \varepsilon^2,
  \tau^2\right\} $$ and invoke Corollary 2 in
 \cite{zhangSolvingConvexSmooth2022}, yielding
 \begin{align*}
  \TinnerUp{ACGD} & =
  \left[ \sqrt{\frac{L(\Lambda_1)}{\rho}} + 1 \right]
  \log \left[ \frac{ \sqrt{L(\Lambda_1) L} \|\xxK - \xxHatKnext\|^2}{\epsin} + 1 \right] + 4.
  \\
                  & = \OOTilde{\frac{\sqrt{L(\Lambda_1)}}{ \sqrt{\rho} }} = \OOTilde{\frac{ \DDU \sqrt{L (\FUB{1} - \FLB{1} + \varepsilon)}}{ \mu_c \sqrt{\min\left\{\varepsilon,
     \tau \right\} \tau } }} .
 \end{align*}

 Crucially, we observe that $\TinnerUp{ACGD}$ depends on $\epsin$ only under
 the logarithmic term, which we hide in $\OOTilde{\innerEmpty{}}$ above. The
 total complexity is computed with a simple multiplication $$
  \TtotalUp{\smoothLabel} = N \cdot \TinnerUp{ACGD}\paren*{\epsin} . $$
\end{proof}

\begin{proof}[of \Cref{cor:FinalRateSmoothIPPM_ACGD_Slater}]
 By the HC--Slater's \Cref{le:Slater_PPM_inner}.\ref{le:Slater_PPM_inner-ITEM2},
 we know that for any $k\geq 0$, the problem
 \eqref{eq:InexactPPMProblemInnerLoopHCHC} satisfies
 $\frac{\beta\, \theta}{2}$-Slater's condition, where
 $\beta \leq  \frac{\mu_c^2 \theta}{\rhoHat \DDUsq}.$
 Consider the following shifted sub-problem
 \begin{equation}\label{eq:acgd_subproblem_Slater}
  \min_{\xx\in \XX} \phi_1^{(k)}(\xx) , \qquad \text{s.t.} \quad
  \phi_2^{(k)}(\xx) - \tau + \frac{\beta\, \theta}{3} \leq 0 ,
 \end{equation}
 which satisfies $\frac{\beta\, \theta}{6}$--Slater's condition.
 Similarly as in \Cref{cor:FinalRateSmoothIPPM_ACGD}, we can derive
 $$
  \phi_1^{(k)}(\xxHatKnext) \geq \FLB{1}, \qquad \phi_1^{(k)}(\xx_{\beta}^{(k)}) \leq F_1(\xx_{\beta}^{(k)}) + \frac{\rhoHat\DDUsq \beta^2}{2 \mu_c^2} \leq \FUB{1} + \theta,
 $$
 thus, the Lagrange multiplier of this problem is at most
 $$
  \lambda^*  \leq \frac{ \phi_1^{(k)}(\yy_{\beta}^{(k)}) - \phi_1^{(k)}(\xxHatKnext) }{\frac{\beta\,\theta}{6}} \leq \frac{\FUB{1} - \FLB{1} + \theta}{\frac{\beta\,\theta}{6}} .
 $$
 Following similar steps as the proof of \Cref{cor:FinalRateSmoothIPPM_ACGD} we establish
 $$
  L(\Lambda_1) = \OOTilde{\frac{L (\FUB{2} - \FLB{2} + \theta)}{\beta \theta }} = \OOTilde{\frac{\rho L \DDUsq (\FUB{2} - \FLB{2} + \theta)}{ \mu_c^2 \theta^2 }},
 $$
 where we set $\beta = \frac{\mu_c^2 \theta}{\rhoHat \DDUsq} \leq 1 $ and $\rhoHat = 2 \rho $. The oracle complexity of inner solver is at most
 $$
  T_{\text{inner}}^{\text{ACGD}} = \OOTilde{\frac{ \DDU \sqrt{L (\FUB{2} - \FLB{2} + \theta)}}{ \mu_c \theta  }}   .
 $$
 The bound on the total oracle complexity of \Cref{algo:PPM} $\TtotalUp{\smoothSlaterLabel}$ follows immediately form \Cref{thm:MainResultHCPPM}.
\end{proof}

\end{document}